\newtheorem{theorem}{Theorem}[section]
\newtheorem*{theorem*}{Theorem}
\newtheorem{proposition}[theorem]{Proposition}
\newtheorem*{prop*}{Proposition}
\newtheorem*{lemma*}{Lemma}
\newtheorem{corollary}[theorem]{Corollary}
\newtheorem{lemma}[theorem]{Lemma}
\newtheorem{lem}[theorem]{Lemma}
\newtheorem{def-theorem}[theorem]{Theorem-Definition}
\newtheorem*{statement*}{Statement}
\newtheorem{notation}[theorem]{Notation}
\def\thm{\begin{theorem}}
\def\ethm{\end{theorem}}
\def\lmm{\begin{lemma}}
\def\elmm{\end{lemma}}
\def\kor{\begin{corollary}}
\def\ekor{\end{corollary}}
\def\frag{\begin{question}\upshape}
\def\efrag{\end{question}}
\def\prop{\begin{proposition}}
\def\eprop{\end{proposition}}
\theoremstyle{definition}
\newtheorem{remark}[theorem]{Remark}
\newtheorem*{thm*}{Theorem}
\newtheorem{question}{Question}
\newtheorem{definition}[theorem]{Definition}
\newenvironment{customthm}[1]
  {\innercustomthm}
  {\endinnercustomthm}
\renewcommand{\subset}{\subseteq}
\newcommand{\Q}{\mathbb{Q}}
\newcommand{\N}{\mathbb{N}}
\newcommand{\R}{\mathbb{R}}
\newcommand{\Lm}{\mathcal{L}}
\newcommand{\cL}{\Lm}
\newcommand{\ord}{\mathrm{ord}}
\newcommand{\dd}{\text{$\delta$-$\dim$}}
\newcommand{\dc}{cl\text{-$\dim$}}
\newcommand{\Def}{\mathrm{Def}}
\newcommand{\U}{{ \mathbb U}}
\newcommand{\G}{{ \mathcal G}}
\newcommand{\fork}[1][]{%
  \mathrel{
    \mathop{
      \vcenter{
        \hbox{\oalign{\noalign{\kern-.3ex}\hfil$\vert$\rlap{\small{$^1$}}\hfil\cr
              \noalign{\kern-.7ex}
              $\smile$\cr\noalign{\kern-.3ex}}}
      }
    }\displaylimits_{#1}
  }
}
\newcommand{\afork}[1][]{%
  \mathrel{
    \mathop{
      \vcenter{
        \hbox{\oalign{\noalign{\kern-.3ex}\hfil$\vert$\rlap{\small{$^{2}$}}\hfil\cr
              \noalign{\kern-.7ex}
              $\smile$\cr\noalign{\kern-.3ex}}}
      }
    }\displaylimits_{#1}
  }
}
\newcommand{\nfork}[1][]{%
  \mathrel{
    \mathop{
      \vcenter{
        \hbox{\oalign{\noalign{\kern-.3ex}\hfil$\vert$\rlap{}\hfil\cr
              \noalign{\kern-.7ex}
              $\smile$\cr\noalign{\kern-.3ex}}}
      }
    }\displaylimits_{#1}
  }
}
\title[Strong density of definable types and closed ordered differential fields]{Strong density of definable types and closed ordered differential fields}
\author[Q. Brouette]{Quentin Brouette}
\address{\hskip-\parindent
Quentin Brouette\\
D\'epartement
  de Math\'ematique (Le Pentagone)\\
  Universit\'e de Mons\\
  20 place du Parc\\ B-7000 Mons\\ Belgium, Belgium.}
\email {quentin.brouette@umons.ac.be}
\author[P. Cubides Kovascics]{Pablo Cubides Kovacsics$^{\ast}$}
\address{\hskip-\parindent
Pablo Cubides Kovacsics \\ Laboratoire de math\'ematiques Nicolas Oresme\\ Universit\'e de Caen\\CNRS UMR 6139 
Universit\'e de Caen BP 5186\\
14032 Caen cedex, France. }
\email {pablo.cubides@unicaen.fr}
\thanks{${}^{\ast}$Supported by the ERC project TOSSIBERG (Grant Agreement 637027)}
\author[F. Point]{Fran\c coise Point$^{\dagger}$
}
\address{\hskip-\parindent
Fran\c coise Point\\
D\'epartement
  de Math\'ematique (Le Pentagone)\\
  Universit\'e de Mons\\
  20 place du Parc\\ B-7000 Mons\\ Belgium, Belgium.}
\email {point@math.univ-paris-diderot.fr}
\thanks{${\;}^{\dagger}$ Research Director at the FRS-FNRS, 
this material is based upon work supported by the National Science Foundation under Grant No. DMS-1440140 while the author was in residence at the Mathematical Sciences Research Institute in Berkeley, California, during the Spring 2017.}
\begin{document}

\maketitle

\begin{abstract} 
The following strong form of density of definable types is introduced for theories $T$ admitting a fibered dimension function $d$: given a model $M$ of $T$ and a definable set $X\subseteq M^n$, there is a definable type $p$ in $X$, definable over a code for $X$ and of the same $d$-dimension as $X$. Both o-minimal theories and the theory of closed ordered differential fields (CODF) are shown to have this property. As an application, we derive a new proof of elimination of imaginaries for CODF.
 \end{abstract}

\let\thefootnote\relax\footnote{2000 \textit{Mathematics Subject Classification.} 03C64, 12H05, 12L12, 03C45.  
\\
\textit{Key words and phrases.} Ordered differential fields, density of definable types, closed ordered differential fields.}

\section*{Introduction}

After Hrushovski's abstract criterion of elimination of imaginaries was introduced in \cite{hrushovski2014}, density properties of definable types have drawn more and more attention (see \cite{johnsonthesis}, \cite{rideauthesis} and \cite{HKR2016}). Given a complete theory $T$ and a model $M$ of $T$, the property which often links imaginaries and definable types is the following: for every definable set $X\subseteq M^n$, there is a definable type $p\in S_n^T(M)$ in $X$ (i.e.,  $p$ contains a formula defining $X$) which is moreover definable over $acl^{eq}(e)$ for $e$ a \emph{code for $X$}. Here, a code for $X$ is an element of $M^{eq}$ which is fixed by all automorphisms (of a sufficiently saturated extension) fixing $X$ setwise. We say in this case that definable types are \emph{dense over $a$-codes}, and if the type $p$ can be taken to be definable over a code of $X$ without passing to the algebraic closure, we say that definable types are \emph{dense over codes}.  

For theories $T$ admitting a fibered dimension function $d$, we strengthen the above density properties as follows: for every definable set $X\subseteq M^n$, there is a definable type $p$ in $X$ which is definable over a code of $X$ (resp. over the algebraic closure of a code of $X$) and has the same $d$-dimension of $X$. We say in this case that definable types are \emph{$d$-dense over codes} (resp. \emph{over $a$-codes}). Very much inspired by Johnson's presentation of Hrushovski's abstract criterion for elimination of imaginaries \cite[Theorem 6.3.1]{johnsonthesis}, we obtain the following Proposition (later Proposition \ref{lem:1-to-n}):

\begin{prop*} 
	Let $T$ be a complete theory. 
	If definable $1$-types are dense over codes (resp. over $a$-codes) 
	then definable $n$-types are dense over codes (resp. $a$-codes). 
	Moreover, assuming $T$ admits a code-definable fibered dimension function $d$, 
	if $1$-types are $d$-dense over codes (resp. $a$-codes) 
	then so are all definable $n$-types.
\end{prop*}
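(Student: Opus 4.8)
The plan is to prove both assertions by induction on $n$, the case $n=1$ being the hypothesis itself. So fix $M\models T$, a definable set $X\subseteq M^n$, and — working inside $M^{eq}$, where canonical parameters exist — a code $e$ of $X$, together with a monster model $\mathfrak U\succeq M$.

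First I would peel off one coordinate. Let $\pi\colon M^n\to M$ be the first projection and $X_1:=\pi(X)$; since $X_1$ is fixed by every automorphism fixing $X$, a code $e_1$ of $X_1$ lies in $dcl^{eq}(e)$. By the case $n=1$ there is a definable $1$-type $p_1\in S_1(M)$ in $X_1$, definable over $e_1$ (resp. over $acl^{eq}(e_1)$), hence over $e$ (resp. $acl^{eq}(e)$). Now fix $a\models p_1|_{\mathfrak U}$; then $a\in X_1(\mathfrak U)$, so the fibre $X^a:=\{\bar x\in\mathfrak U^{n-1}:(a,\bar x)\in X(\mathfrak U)\}$ is nonempty, and as the family $(X^t)_t$ is uniformly $e$-definable the assignment $t\mapsto\ulcorner X^t\urcorner$ is an $e$-definable function; write $g(a):=\ulcorner X^a\urcorner$, a code of $X^a$ in $dcl^{eq}(ea)$. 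Choosing a small $N\preceq\mathfrak U$ containing $M\cup\{a\}$, the set $X^a$ is definable over $N$, so the inductive hypothesis applied over the model $N$ yields a definable $(n-1)$-type $p'\in S_{n-1}(N)$ in $X^a$, definable over $g(a)$ (resp. over $acl^{eq}(g(a))\subseteq acl^{eq}(ea)$). I would then assemble $\widetilde p\in S_n(M)$ by declaring, for every formula $\phi(x_1,\bar x;\bar z)$ and every $\bar c\in M$,
\[
  \phi(x_1,\bar x;\bar c)\in\widetilde p \quad\Longleftrightarrow\quad \phi(a,\bar x;\bar c)\in p'.
\]
One checks directly that this is a complete type over $M$ (for each $\bar c$ exactly one of $\phi(a,\bar x;\bar c)$, $\neg\phi(a,\bar x;\bar c)$ lies in $p'$), that it concentrates on $X$ (the instance ``$(x_1,\bar x)\in X$'' translates to ``$\bar x\in X^a$'', which is in $p'$), and that its restriction to the variable $x_1$ is $p_1$.

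The technical heart is to verify that $\widetilde p$ is definable over $e$. Unwinding definition schemes, $\phi(x_1,\bar x;\bar c)\in\widetilde p$ iff $\mathfrak U\models d_{p'}\phi(a,\bar c)$; since $p'$ is definable over the single imaginary $g(a)$ we may write $d_{p'}\phi(z_1,\bar z)=\Theta_\phi(z_1,\bar z;g(z_1))$ with $\Theta_\phi$ without parameters, so that $\chi_\phi(z_1,\bar z):=\Theta_\phi(z_1,\bar z;g(z_1))$ has parameters in $dcl^{eq}(e)$ and
\[
  \phi(x_1,\bar x;\bar c)\in\widetilde p \iff \mathfrak U\models\chi_\phi(a,\bar c) \iff \chi_\phi(x_1;\bar c)\in p_1,
\]
the last step because $a\models p_1|_M$ and $\chi_\phi(x_1;\bar c)$ has parameters in $M$. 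As $p_1$ is definable over $e$ and $\chi_\phi$ over $dcl^{eq}(e)$, the set $\{\bar c\in M:\chi_\phi(x_1;\bar c)\in p_1\}$ is $e$-definable; hence $\widetilde p$ is definable over the code $e$. The ``$a$-code'' version runs along exactly the same lines with $acl^{eq}(-)$ everywhere; the extra point — and the step I expect to be the main obstacle there — is to check that, since $p_1$ is definable over $acl^{eq}(e)$ and $a\models p_1|_M$, the type $\mathrm{tp}(a\,e^\sharp/M)$ is still definable over $acl^{eq}(e)$, where $e^\sharp\in acl^{eq}(ea)$ is a parameter over which $p'$ is definable; granting this, the integration step produces definitions over $acl^{eq}(e)$.

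For the $d$-dense statement only the first step changes. Using that $d$ is a code-definable fibered dimension function, choose $j^\ast\in\{0,\dots,n-1\}$ with $d\bigl(\{t\in X_1:d(X^t)=j^\ast\}\bigr)+j^\ast=d(X)$ — such a $j^\ast$ exists by the fibre axiom for $d$ — and run the argument with $X_1^\ast:=\{t\in X_1:d(X^t)=j^\ast\}$ in place of $X_1$; $X_1^\ast$ is definable over a code of $X$ (as $d$ is code-definable), so the case $n=1$ gives $p_1$ in $X_1^\ast$ with $d(p_1)=d(X_1^\ast)=d(X)-j^\ast$, and then a realisation $a$ lies in $X_1^\ast$, so $d(X^a)=j^\ast$ and the inductive hypothesis supplies $p'$ with $d(p')=j^\ast$. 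The construction of $\widetilde p$ is unchanged, and one gets $d(\widetilde p)=d(X)$: the inclusion $X\in\widetilde p$ gives $d(\widetilde p)\le d(X)$, while for any $M$-definable $Z\in\widetilde p$ one has $Z^a\in p'$, hence $d(Z^a)\ge j^\ast$, so $a$ lies in the $M$-definable set $\{t:d(Z^t)\ge j^\ast\}$, whose $d$-dimension is therefore $\ge d(p_1)=d(X)-j^\ast$; fiberedness then forces $d(Z)\ge d(X)$.
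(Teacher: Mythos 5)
Your overall strategy is the same as the paper's: induct on $n$, split off one coordinate, realize a definable type in the base, pass to a model containing that realization, find a definable type in the fibre, and glue via transitivity of definability (your ``unwinding of definition schemes'' for the code case is exactly a proof of the paper's Lemma \ref{transitivity}, which the paper cites from Johnson's thesis). The one structural difference is the direction of the split: the paper projects onto the \emph{first $n$ coordinates} and applies the base case $n=1$ to the one-dimensional fibres $X_a\subseteq\U^1$, whereas you project onto the \emph{first coordinate} and apply the inductive hypothesis to the $(n-1)$-dimensional fibres. For plain density this is harmless. For the $a$-code case you correctly identify, but leave unproven, the needed transitivity statement for $acl^{eq}$-definability; the paper disposes of this by invoking Lemma \ref{lem:transitivity} for $\afork$ (Johnson's Lemmas 6.2.9 and 6.2.12), so you should either cite that or supply the argument handling the finitely many conjugates of $e^\sharp$ over $ea$.

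In the $d$-density part, however, your choice of coordinate creates a genuine gap. You invoke ``the fibre axiom'' to produce $j^\ast\in\{0,\dots,n-1\}$ with $d(\{t\in X_1:d(X^t)=j^\ast\})+j^\ast=d(X)$, and you assert that $X_1^\ast=\{t\in X_1: d(X^t)=j^\ast\}$ is $c(X)$-definable ``as $d$ is code-definable''. But (Dim 4) and (Dim 5) as stated only concern fibres in $\U^1$ over the first $n$ coordinates, i.e.\ $j\in\{0,1\}$; your version, with fibres of dimension up to $n-1$ over a one-dimensional base, is a nontrivial consequence requiring an iteration of (Dim 4) together with (Dim 3), and the $c(X)$-definability of $X_1^\ast$ requires first deriving from (Dim 5) that $d$ is automorphism-invariant. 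None of this is supplied. The paper's decomposition avoids the issue entirely: peeling off the last coordinate means the dimension split is exactly $d(X)=d(X(i))+i$ with $i\in\{0,1\}$, so (Dim 4) and (Dim 5) apply verbatim, the induction is applied to $X(i)\subseteq\U^n$, and the final dimension count is the additivity lemma $d(a,b/K)=d(a/K)+d(b/Ka)$ (Lemma \ref{lem:additivity}), which the paper proves from the axioms. Either adopt that decomposition, or state and prove the generalized fibre lemma (definability, additivity, and $c(X)$-definability of the sets $\{t: d(X^t)=j\}$ for all $j$) before using it; your closing computation of $d(\widetilde p)$ also silently relies on it.
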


The first part of the previous proposition is contained in \cite{johnsonthesis}. The result for $d$-density is new.  We obtain as a corollary that in any o-minimal theory, definable types are $\dim$-dense over codes (Proposition \ref{prop:strong-density-ominimal}), where $\dim$ denotes the usual topological dimension. We use this for the particular case of real closed fields in order to show that definable types are dense over codes for closed ordered differential fields (CODF), the main motivation of this paper. 

\begin{customthm}{A}\label{thm:densityoftypes} 
	Let $K$ be a model of CODF, $X\subseteq K^n$ be a non-empty definable set and $C$ be a code for $X$. 
	Then there is a $C$-definable type $p\in S_n^{CODF}(K)$ in $X$. 
\end{customthm}

We apply then Theorem A to give a new proof of elimination of imaginaries in CODF (see Theorem \ref{criterion}), very much inspired by the recent proofs given for algebraically closed valued fields by Hrushovski \cite{hrushovski2014} and further simplified by Johnson \cite[Chapter 6]{johnsonthesis}. 

Finally, using that models of CODF can be endowed with a fibered dimension function $\delta$-$\dim$, we strengthen Theorem \ref{thm:densityoftypes} by showing that definable types are $\delta$-$\dim$-dense over codes for CODF.

\begin{customthm}{B}\label{thm:strongdensityoftypes} 
	Let $K$ be a model of CODF, $X\subseteq K^n$ be a non-empty definable set and $C$ be a code for $X$.
	Then there is a $C$-definable type $p\in S_n^{CODF}(K)$ in $X$ such that $\delta$-$\dim(p)=\delta$-$\dim(X)$. 
\end{customthm}

The reason why we state Theorems \ref{thm:densityoftypes} and \ref{thm:strongdensityoftypes} separately is simply to stress that the new proof of elimination of imaginaries for CODF does not use the $\delta$-dimension (and the stronger density property of definable types). 

Motivated (in part) by the fact that in CODF, isolated types are not dense, the first author had shown in his thesis that definable types are dense (see \cite{brouettethesis} and \cite{brouette2017}). A key difference with the present work is that we carefully take into account the parameters over which types are defined. As already pointed out, this is necessary if, for instance, one is aiming to obtain elimination of imaginaries.

Even though CODF is NIP, it is not strongly dependent (hence not dp-minimal) \cite{brouettethesis}. We would like to point out a connection of the above proposition (Proposition \ref{lem:1-to-n}) with a result of Simon and Starchenko on definable types in dp-minimal theories \cite{simon2014}. Let us briefly state their result. Let $T$ be a dp-minimal theory, $M$ be a model of $T$ and $A$ be a subset of $M$. Simon and Starchenko show that, under the hypothesis that every unary $A$-definable set $X$ contains an $A$-definable type $p\in S_1^T(A)$, every non-forking formula $\varphi(x)$ (in possibly many variables) can be extended to a definable type over a model of $T$. Notice that assuming elimination of imaginaries, their hypothesis is equivalent to the assumption of the first part of Proposition \ref{lem:1-to-n}. Nevertheless, their result states the existence of a definable type extending a non-forking formula, without specifying the parameters over which such a type is definable. It is a natural question to ask whether a direct analogue of Proposition \ref{lem:1-to-n} holds for dp-minimal theories. 

Finally, we expect the strategy followed in this paper can be applied to other theories. In particular, we would like to apply similar techniques in future work to other topological structures endowed with a {\it generic derivation}, as defined in \cite{guzy-point2010}.

\

The article is laid out as follows. In Section \ref{sec:prelim}, we set the background and fix some terminology. 
In particular, we introduce what we call a code-definable fibered dimension function. Section 2 is divided in two parts. Subsection 2.1 is devoted to the proof of Proposition \ref{lem:1-to-n} and subsection 2.2 to the proof of Theorem \ref{thm:densityoftypes}. The alternative proof of elimination of imaginaries for CODF is presented in Section \ref{sec:EI}. 
In Section \ref{delta-dim}, we first recall the fibered dimension function $\delta$-dim that was defined in models of CODF and show that $\delta$-dim is code-definable. We end by proving that definable types in CODF are $\delta$-dim-dense over codes, that is, Theorem \ref{thm:strongdensityoftypes}.

\section{Preliminaries}\label{sec:prelim}

Let $\cL$ be a language, $T$ be a complete $\cL$-theory and $\U$ be a monster model of $T$, namely a model of $T$, 
$\kappa$-saturated and homogeneous for a sufficiently large cardinal $\kappa$. 
A subset of $\U$ is small if it is of cardinality strictly less than $\kappa$. Throughout, all substructures and subsets of $\U$ under consideration will be assume to be small. 

Let $A\subseteq \U$. Given an $\cL$-formula $\varphi(x)$, we let $\ell(x)$ denote the length of the tuple $x$. By an $\cL$-definable set, we mean a set defined by an $\cL$-formula with parameters in $\U$. When $\cL$ is clear from the context, we also use `definable' instead of `$\cL$-definable' and `$A$-definable' to specify that the parameters come from $A$. 
We denote by $\Def(\U)$ the set of all $\cL$-definable sets in $\U$. For integers $n\geq k\geq 1$, we denote by $\pi_k:\U^n\to \U^k$ the projection onto the first $k$ coordinates. Given a subset $X\subseteq \U^{n+m}$ and $a\in \pi_n(X)$, we let $X_a$ denote the fiber of $X$ over $a$, that is, $X_a:=\{x\in \U^m: (a,x)\in X\}$. 

By $S_n^T(A)$, we denote the Stone space of complete $n$-types consistent with $T$ with parameters in $A$. Let $B\subseteq \U$. Recall that a type $p\in S_n^T(A)$ is $B$-definable if for every $\cL$-formula $\varphi(x,y)$ with $x=(x_1,\ldots, x_n)$, $y=(y_1,\ldots, y_m)$ and without parameters, there is an $\cL$-formula $\psi(y)$ with parameters in $B$ such that for all $a\in A^m$, 
$\varphi(x,a)\in p$ if and only if $\U\models \psi(a)$. That formula $\psi(y)$ is classically denoted by $d_{p} x\varphi(x,y)$.

\subsection{Imaginaries and codes}\label{subsec:codes}

Let $X\subseteq \U^n$ be a definable set. 
A finite tuple $e\in \U^{eq}$ is a \emph{code for $X$} 
if for every $\sigma\in Aut_\cL(\U)$, it holds that $\sigma(X)=X$ if and only if $\sigma(e)=e$. 
Every non-empty $\cL$-definable set $X$ has at least one code and any two codes for $X$ are interdefinable. 
We let $c(X)$ denote \emph{some} code for $X$. 
Let $A$ be a subset of $\U$ and $p\in S^T_n(A)$, 
a \emph{code for $p$} is a (possibly infinite) tuple $e$ in $\U^{eq}$ such that for every $\sigma\in Aut_{\cL}(\U)$,
it holds that $\sigma(p)=p$ if and only if $\sigma(e)=e$. 
Every definable type $p$ has at least one code, namely, a tuple consisting of the elements 
$$\{c(d_{p} x\varphi(x,\U)): \varphi(x,y) \;\text{an $\cL$-formula} \}.$$ 
Classically (see for instance \cite{tent-ziegler2012} or \cite{messmer1996}), 
one sometimes calls \emph{canonical parameter} what we call here a code for a definable set 
and \emph{canonical base} what we call here a code for a type. 
Notice that any definable set $X$ is $c(X)$-definable.
When $T$ has elimination of imaginaries, each code $e$ is interdefinable with a finite tuple of $\U$ and, abusing of notation, we also denote such a finite tuple by $c(X)$. 

\subsection{Definable types} 

We gather three well-known properties on definable types whose proofs are left to the reader. 
\begin{lemma}\label{lem:dcltype} 
	Let $A\subseteq M\prec \U$ and $a\in \U^n$ be such that $tp(a/M)$ is $A$-definable. 
	If $b\in dcl(A,a)$, then $tp(a,b/M)$ is also $A$-definable.  
\end{lemma}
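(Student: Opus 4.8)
The plan is to unwind the definitions of definability of a type and of $\mathrm{dcl}$, and to transport the defining schema for $\mathrm{tp}(a/M)$ along the definable function witnessing $b\in\mathrm{dcl}(A,a)$. Since $b\in\mathrm{dcl}(A,a)$, there is an $\mathcal{L}$-formula $\theta(x,z)$ and a tuple $c\in A^{\ell}$ such that $\theta(a,c)$ holds and $\theta(x,c)$ has a unique solution in $x$, namely $b$ (we may absorb $c$ into the formula and just write $\theta(x)$ over $A$, which is harmless since $A\subseteq M$). The point is then that any formula $\varphi(x,y)$ in the $(x,y)$-variables — where $x$ ranges over the $a$-sort, $y$ over the $b$-sort — can be "pulled back" to a formula about $a$ alone.

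First I would write, for an arbitrary $\mathcal{L}$-formula $\varphi(x,y,w)$ (with $w$ the parameter variable), the equivalence, valid for all $m\in M$,
\[
\varphi(a,b,m)\ \Longleftrightarrow\ \exists y\,\bigl(\theta(y)\wedge\varphi(a,y,m)\bigr)\ \Longleftrightarrow\ \forall y\,\bigl(\theta(y)\rightarrow\varphi(a,y,m)\bigr),
\]
using that $b$ is the unique realization of $\theta$ and that $b\in\mathrm{dcl}(M)$ as well (so these are genuine statements about $a$ over $M$). Set $\chi(x,w):=\exists y\,(\theta(y)\wedge\varphi(x,y,w))$, an $\mathcal{L}$-formula over $A$. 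Then for $m\in M$ we have $\varphi(a,b,m)\in\mathrm{tp}(a,b/M)$ iff $\chi(a,m)\in\mathrm{tp}(a/M)$. Since $\mathrm{tp}(a/M)$ is $A$-definable, there is an $\mathcal{L}(A)$-formula $\psi(w)$ with $\chi(x,w)\in\mathrm{tp}(a/M)\iff\U\models\psi(m)$ for all $m\in M$; more precisely $\psi$ is $d_px\,\chi(x,w)$ composed with the $A$-definable absorption of the $\theta$-parameter. Thus $\psi$ is an $A$-definition of the $\varphi$-type of $(a,b)$ over $M$, and as $\varphi$ was arbitrary this shows $\mathrm{tp}(a,b/M)$ is $A$-definable.

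The only genuinely delicate point is the bookkeeping of parameters: one must check that the formula defining "$\varphi(a,b,w)$" is really over $A$ and not over $M$. This is where $A\subseteq M$ is used twice — once to know that $\theta$ can be taken over $A$ (the $\mathrm{dcl}$ witness lives in $A$), and once implicitly because $b$, being in $\mathrm{dcl}(M)$, is a $M$-point so that $\varphi(x,y,w)$ restricted along $y=b$ is still a condition on types over $M$. I expect no essential obstacle here; the argument is a routine definability-schema transport, and this is exactly why the authors leave it to the reader.
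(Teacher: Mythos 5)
Your overall strategy---pull each formula about $(a,b)$ back to a formula about $a$ alone via the formula witnessing $b\in dcl(A,a)$, and then apply the $A$-definition of $tp(a/M)$---is the standard argument and does prove the lemma (the paper leaves the proof to the reader, so there is no written proof to compare against). However, there are two concrete errors, and they occur exactly at the point you yourself single out as the delicate one, namely the parameter bookkeeping. First, your witness for $b\in dcl(A,a)$ is mis-stated: a formula $\theta(x,c)$ with $c\in A^{\ell}$ whose unique solution is $b$ would witness $b\in dcl(A)$, not $b\in dcl(A,a)$; moreover, as literally written (``$\theta(a,c)$ holds and $\theta(x,c)$ has a unique solution, namely $b$'') it forces $a=b$. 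The witness must carry a free slot for $a$: an $\cL(A)$-formula $\theta(y,x)$ such that $b$ is the unique solution of $\theta(y,a)$. Second, the claim that ``$b\in dcl(M)$ as well'' is false in general: one only has $b\in dcl(A,a)\subseteq dcl(M,a)$, and in the cases of interest $a\notin M$, so $b$ need not be definable over $M$; the justification you hang on this claim is therefore spurious.

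Neither slip is fatal, because the correct construction is already implicit in your $\chi$ once the dependence on $a$ is made explicit as a free variable rather than left as a hidden parameter. Set $\chi(x,w):=\exists y\,\bigl(\theta(y,x)\wedge\varphi(x,y,w)\bigr)$, an $\cL(A)$-formula in the object variable $x$ and parameter variable $w$. For every $m\in M$ one then has $\models\varphi(a,b,m)$ iff $\models\chi(a,m)$ (left to right: take $y=b$; right to left: uniqueness of $b$ as a solution of $\theta(y,a)$), and since $A\subseteq M$ the formula $\chi(x,m)$ has parameters in $M$, so this is equivalent to $\chi(x,m)\in tp(a/M)$. Writing $\chi$ as a parameter-free formula with the $A$-parameters substituted and applying the $A$-definability of $tp(a/M)$ yields an $\cL(A)$-formula $\psi(w)$ defining the $\varphi$-type of $(a,b)$ over $M$. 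With that correction the proof is complete; the detour through $dcl(M)$ should simply be deleted.
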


\begin{lemma}[{\cite[Lemma 6.2.7]{johnsonthesis}}] \label{transitivity} 
	Let $C\subset B\subset \U$ and let $a,b$ be two (possibly infinite) tuples of elements in $\U$. 
	If $tp(a/B)$ is $C$-definable and $tp(b/Ba)$ is $Ca$-definable, then $tp(a, b/B)$ is $C$-definable. 
	
\end{lemma}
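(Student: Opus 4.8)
The plan is to prove this by the standard parameter-folding argument, which is the type-theoretic analogue of transitivity of non-forking. Definability of a type is verified one formula at a time, and each $\cL$-formula mentions only finitely many coordinates, so it suffices to work with finite subtuples $a_0$ of $a$ and $b_0$ of $b$. Fix a parameter-free $\cL$-formula $\varphi(x,y,z)$, where $x$, $y$ match $a_0$, $b_0$ and $z$ is a tuple of parameter variables; I must produce a formula $\psi(z)$ over $C$ such that for every tuple $c$ from $B$ one has $\models\varphi(a_0,b_0,c)$ if and only if $\models\psi(c)$.

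First I would feed $\varphi$ to the definition scheme of $tp(b/Ba)$: viewing $\varphi(x,y,z)$ as a formula in the object variable $y$ with parameter variables $(x,z)$, $Ca$-definability of $tp(b/Ba)$ gives a formula $\chi(x,z):=d_{tp(b/Ba)}y\,\varphi$ with parameters in $Ca$ such that $\varphi(x',y,c')\in tp(b/Ba)\iff\models\chi(x',c')$ for all $(x',c')$ from $Ba$. Since $\varphi(a_0,y,c)\in tp(b/Ba)$ is equivalent to $\models\varphi(a_0,b_0,c)$ (as $\varphi$ involves only the $b_0$-coordinates of $y$), and since $a_0$ and every $c$ from $B$ lie in $Ba$, this yields $\models\varphi(a_0,b_0,c)\iff\models\chi(a_0,c)$.

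Next I would reabsorb the parameters. As $\chi$ has only finitely many parameters, write $\chi(x,z)=\rho(x,z,a_1)$ with $a_1$ a finite subtuple of $a$ and $\rho$ over $C$. Let $a_2$ be the finite subtuple of $a$ obtained by joining $a_0$ and $a_1$, with matching variables $x'$, and let $\eta(x',z)$ be the $C$-formula obtained from $\rho$ by feeding the appropriate coordinates of $x'$ into its first and third slots, so that $\models\rho(a_0,c,a_1)\iff\models\eta(a_2,c)$. Then $\models\varphi(a_0,b_0,c)$ holds iff $\eta(x',c)\in tp(a/B)$ for every $c$ from $B$. Now applying the definition scheme of $tp(a/B)$ (which is $C$-definable) to $\eta(x',z)$ produces $\psi(z):=d_{tp(a/B)}x'\,\eta$ over $C$ with $\eta(x',c)\in tp(a/B)\iff\models\psi(c)$. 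Chaining the equivalences gives $\models\varphi(a_0,b_0,c)\iff\models\psi(c)$ with $\psi$ over $C$, hence $tp(a,b/B)$ is $C$-definable.

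There is no deep obstacle here; the one point to be careful about is the parameter bookkeeping. The scheme for $tp(b/Ba)$ legitimately produces a formula with parameters from $Ca$, and those $a$-parameters must be folded back into object variables before the scheme for $tp(a/B)$ — which only sees parameters from $B$ — can be invoked. One should also record the elementary facts that $\varphi(a_0,y,c)$ belongs to $tp(b/Ba)$ exactly when $\models\varphi(a_0,b_0,c)$, and that restricting to finite subtuples loses nothing since formulas are finitary, so the infinite-tuple case follows formula by formula.
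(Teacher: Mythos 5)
Your argument is correct and is exactly the standard parameter-folding proof of transitivity of definability; the paper itself gives no proof, citing Johnson's thesis, and your reasoning matches that source's argument. The only point you elide is that $\eta$ carries parameters from $C$, so one must first split them off as extra parameter variables before invoking the definition scheme of $tp(a/B)$ and then substitute them back in (legitimate since $C\subset B$), but this is routine bookkeeping of the same kind you already handle for the $a$-parameters.
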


We define ternary relations $a\fork[C] B$ and $a\afork[C] B$ by
\[
a\fork[C] B \Leftrightarrow \text{there exists a model $M\supseteq BC$ such that $tp(a/M)$ is $C$-definable}, 
\]
and $a\afork[C] B$ if and only if $a\fork[D] B$ where $D=acl^{eq}(C)$ (see also \cite[Lemma-Definition 6.2.8]{johnsonthesis}). The following is a standard consequence of Lemma \ref{transitivity} (see {\cite[Lemma 6.2.9]{johnsonthesis}} and {\cite[Lemma 6.2.12]{johnsonthesis}}). 

\begin{lemma}\label{lem:transitivity} Let $\nfork$ denote either $\fork$ or $\afork$. Assume that $a\nfork[C]B$ and $b\nfork[Ca]B$, then $ab\nfork[C]B$.  
\end{lemma}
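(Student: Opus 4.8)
The plan is to reduce the statement to Lemma~\ref{transitivity}. The point to watch is that the model witnessing $a\nfork[C]B$ and the one witnessing $b\nfork[Ca]B$ need not be comparable, whereas Lemma~\ref{transitivity} needs both types controlled over one and the same model; I would bridge this with an \emph{extension} remark for $\nfork$ together with a transport argument, using that ``being $C$-definable'' is preserved by automorphisms fixing $C$ pointwise.

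I treat first $\nfork=\fork$. \textbf{Extension.} I would first note: if $x\fork[D]Y$ and $Y\subseteq Y'$, then $x'\fork[D]Y'$ for some $x'$ with $tp(x'/Y\cup D)=tp(x/Y\cup D)$. Indeed, pick a model $M_0\supseteq Y\cup D$ with $tp(x/M_0)$ being $D$-definable, let $q$ be its canonical global $D$-definable extension, take a model $M'\supseteq M_0\cup Y'$ and realize $x'\models q|_{M'}$; then $tp(x'/M')=q|_{M'}$ is $D$-definable, so $x'\fork[D]Y'$, while $x'\equiv_{M_0}x$ gives $x'\equiv_{Y\cup D}x$.

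\textbf{Reduction and transport.} By the first hypothesis, fix a model $M_1\supseteq BC$ with $tp(a/M_1)$ $C$-definable. Applying Extension to $b\fork[Ca]B$ with $Y'=M_1$ (legitimate since $B\subseteq M_1$) produces $b'$ with $b'\equiv_{BCa}b$ and a model $N\supseteq M_1\cup\{a\}$ with $tp(b'/N)$ $Ca$-definable; restricting, $tp(b'/M_1a)$ is $Ca$-definable. Since $C\subseteq M_1\subseteq\U$, Lemma~\ref{transitivity} applied to the tuples $a,b'$ over $M_1$ gives that $tp(ab'/M_1)$ is $C$-definable, that is, $ab'\fork[C]B$. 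Finally, as $b'\equiv_{BCa}b$ there is $\tau\in Aut_\cL(\U/BCa)$ with $\tau(b')=b$; then $\tau(a)=a$, so $\tau(ab')=ab$, and $\tau(M_1)$ is a model containing $BC$ over which $tp(ab/\tau(M_1))=\tau\bigl(tp(ab'/M_1)\bigr)$ is $C$-definable. Hence $ab\fork[C]B$.

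For $\nfork=\afork$, unravel the definition: with $D=acl^{eq}(C)$, and noting $acl^{eq}(Ca)=acl^{eq}(D\cup\{a\})$, the hypotheses say $a\fork[D]B$ and $b\fork[acl^{eq}(Da)]B$, and the goal is $ab\fork[D]B$. The same three steps apply with $D$ and $acl^{eq}(Da)$ in place of $C$ and $Ca$. The one step that is not purely formal is the invocation of Lemma~\ref{transitivity}: here it must be used in the strengthened form that allows $tp(b/Ba)$ to be only $acl^{eq}(Da)$-definable rather than $Da$-definable --- equivalently, in the form compatible with passing to algebraic closures in $\U^{eq}$, which is \cite[Lemma~6.2.12]{johnsonthesis}. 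I expect this compatibility with $acl^{eq}$ to be the only genuine obstacle, and it is exactly why $\fork$ and $\afork$ are handled in parallel rather than deducing the second from the first.
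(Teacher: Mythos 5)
Your argument is correct and takes essentially the same route as the paper, which gives no proof of this lemma at all: it simply declares it a standard consequence of Lemma \ref{transitivity} and cites \cite[Lemmas 6.2.9 and 6.2.12]{johnsonthesis}, and your extension-plus-transport argument for $\fork$ is exactly that standard argument, spelled out in more detail than the paper provides. For $\afork$ you correctly isolate the one non-formal point (using Lemma \ref{transitivity} with $acl^{eq}(Da)$-definability in place of $Da$-definability) and defer it to the same reference the paper itself relies on, so the proposal is consistent with, and somewhat more explicit than, the paper's treatment.
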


\subsection{Dimension functions} We recall the definition of a fibered dimension function from \cite{vandendries1989}. 
\begin{definition}\label{def:dim} 
	A fibered dimension function on $\U$ is a function $d:\Def(\U)\to \mathbb{N}\cup\{-\infty\}$ 
	satisfying the following conditions for $X, Y\in \Def(\U)$:
	\begin{enumerate}[({Dim} 1)]
		\item $d(X) = -\infty$ iff $X = \emptyset$, $d(\{a\})=0$ for each $a\in \U$ and $d(\U)= 1$; 
		\item $d(X\cup Y)=\max\{d(X), d(Y)\}$;
		\item if $X\subseteq \U^n$, then $d(X)=d(X^\sigma)$ for each permutation $\sigma$ of $\{1,\ldots,n\}$;  
		\item if $X\subseteq \U^{n+1}$ and $l\in\{0,1\}$, then the set $X(l):=\{a\in \pi_n(X): d(X_a)=l\}$ belongs to $\Def(\U)$ 
		and $d(\{(x,y)\in X : x\in X(l)\})=d(X(l))+l$.
	\end{enumerate}
\end{definition}
If in addition, $d$ satisfies the property (Dim 5) below, we will say that $d$ is \emph{code-definable}.
\begin{enumerate}[({Dim} 5)]
	\item If $X\subseteq \U^{n+1}$, the sets $X(0)$ and $X(1)$ are $c(X)$-definable.
\end{enumerate}

\

Let $d$ be a dimension function on $\U$. Given a set $C\subseteq \U$, we extend the function $d$ to the spaces of types $S_n^T(C)$ by setting, for any $p\in S_n^T(C)$, 
\begin{equation}\label{eq:type-dimension}\tag{D}
d(p):=\inf\{d(\varphi(\U,c)) : \varphi(x,c) \in p\}.
\end{equation}	
Note that there always exists a formula $\varphi(x,c)\in p$ such that $d(p)=d(\varphi(\U,c))$. The converse is shown in Proposition \ref{real}. For a tuple $a\in \U^n$, we set $d(a/C)$ as $d(tp(a/C))$.

Here are some standard consequences of Definition \ref{def:dim} whose proofs are left to the reader. 

\begin{lemma}\label{lem:dim} Let $X$ and $Y$ be definable sets. 
\begin{enumerate}
\item if $X\subseteq Y$, then $d(X)\leqslant d(Y)$; 
\item if $X$ is non-empty and $X\subseteq \U^{n+1}$, then $\pi_n(X)=X(1)\cup X(0)$;
\item for any $C\subset \U$, $a\in \U^n$ and $b\in \U$, we have that $d(a/C)\leqslant d(a,b/C)$;
\item for any $C\subseteq D\subset \U$ and $a\in \U^n$, we have that $d(a/D)\leqslant d(a/C)$.  
\end{enumerate}
\end{lemma}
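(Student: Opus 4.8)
The four items follow, in order, from the axioms (Dim 1)--(Dim 4) together with the items proved before them; only item (3) requires any real work, and that is where I expect the mild subtlety to lie.

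For (1), since $X\subseteq Y$ we have $X\cup Y=Y$, so (Dim 2) gives $d(Y)=d(X\cup Y)=\max\{d(X),d(Y)\}\geq d(X)$. For (2), the inclusions $X(0),X(1)\subseteq\pi_n(X)$ are immediate from the definition of $X(l)$. Conversely, if $a\in\pi_n(X)$ then $X_a\neq\emptyset$ and $X_a\subseteq\U$, so by (1) and (Dim 1) we get $d(X_a)\leq d(\U)=1$ while $d(X_a)\neq-\infty$; since $d$ takes values in $\N\cup\{-\infty\}$ this forces $d(X_a)\in\{0,1\}$, i.e.\ $a\in X(0)\cup X(1)$.

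For (3), the plan is to pick a formula witnessing $d(a,b/C)$ and show that projecting off the last coordinate cannot raise $d$. Using the remark following \eqref{eq:type-dimension}, choose $\psi(x,y,c)\in tp(a,b/C)$ with $d(\psi(\U,\U,c))=d(a,b/C)$, and set $Z:=\psi(\U,\U,c)\subseteq\U^{n+1}$. Since $\psi(a,b,c)$ holds, the formula $\exists y\,\psi(x,y,c)$, which defines $\pi_n(Z)$, belongs to $tp(a/C)$, so $d(a/C)\leq d(\pi_n(Z))$; it therefore suffices to prove $d(\pi_n(Z))\leq d(Z)$. By (2), $\pi_n(Z)=Z(0)\cup Z(1)$, so by (Dim 2) it is enough to bound each of $d(Z(0))$, $d(Z(1))$ by $d(Z)$. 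Applying (Dim 4) with $l=0$ gives $d(\{(x,y)\in Z:x\in Z(0)\})=d(Z(0))$, and this set is contained in $Z$, so $d(Z(0))\leq d(Z)$ by (1); applying (Dim 4) with $l=1$ gives $d(\{(x,y)\in Z:x\in Z(1)\})=d(Z(1))+1\leq d(Z)$ by (1), hence a fortiori $d(Z(1))\leq d(Z)$. Combining, $d(a/C)\leq d(\pi_n(Z))\leq d(Z)=d(a,b/C)$. (The case of a tuple $b$ in place of a single element then follows by an immediate induction, though it is not needed here.)

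For (4), note that every $\cL$-formula with parameters in $C$ is in particular an $\cL$-formula with parameters in $D$, so $tp(a/C)\subseteq tp(a/D)$ as sets of formulas; hence the infimum in \eqref{eq:type-dimension} defining $d(a/D)$ is taken over a set containing the one defining $d(a/C)$, whence $d(a/D)\leq d(a/C)$.
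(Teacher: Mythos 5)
The paper leaves this lemma's proof to the reader, so there is no written argument to compare against; your proof is correct and is the standard derivation from (Dim 1)--(Dim 4) and the definition \eqref{eq:type-dimension}. In particular, the key step in (3) --- bounding $d(\pi_n(Z))$ by $d(Z)$ via the decomposition $\pi_n(Z)=Z(0)\cup Z(1)$ together with (Dim 4) and item (1) --- is exactly what is needed, and the verification that the remaining items are immediate from (Dim 1), (Dim 2) and the monotonicity of the infimum is sound.
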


\begin{lemma}\label{lem:additivity} 
	For any $C\subset \U$, $a\in \U^n$ and $b\in \U$, we have that $d(a,b/C)=d(a/C)+d(b/Ca)$. 
\end{lemma}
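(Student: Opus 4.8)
The plan is to prove the two inequalities separately, working throughout with a formula that realizes the infimum in (D). First I would fix a formula $\varphi(x,y,c)\in tp(a,b/C)$ (with $c$ from $C$, $x$ of length $n$, $y$ of length $1$) that witnesses $d(a,b/C)=d(\varphi(\U,\U,c))$; such a formula exists by the remark following (D). For the inequality $d(a,b/C)\le d(a/C)+d(b/Ca)$, I would choose $\psi(x,c')\in tp(a/C)$ with $d(\psi(\U,c'))=d(a/C)$ and $\theta(y,c'',a)\in tp(b/Ca)$ with $d(\theta(\U,c'',a))=d(b/Ca)$ (here $c',c''$ from $C$), and consider the definable set $Z=\{(x,y): \psi(x,c')\wedge \theta(y,c'',x)\}\subseteq\U^{n+1}$. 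The tuple $(a,b)$ lies in $Z$, so $d(a,b/C)\le d(Z)$; and by (Dim 4) together with (Dim 2) one gets $d(Z)\le d(\pi_n(Z)) + \max_{l}\{\, l : Z(l)\neq\emptyset\,\}$. Since $\pi_n(Z)\subseteq \psi(\U,c')$ we have $d(\pi_n(Z))\le d(a/C)$ by Lemma \ref{lem:dim}(1), and the fiber $Z_a = \theta(\U,c'',a)$ has dimension $d(b/Ca)\in\{0,1\}$, so every nonempty fiber has dimension $\le d(b/Ca)$; combining gives the bound.

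For the reverse inequality $d(a,b/C)\ge d(a/C)+d(b/Ca)$, I would work with the witnessing formula $\varphi(x,y,c)$ above and let $X=\varphi(\U,\U,c)$, so $d(a,b/C)=d(X)$. Apply (Dim 4) to $X\subseteq\U^{n+1}$: writing $X(0),X(1)$ for the definable sets of base points with $0$- and $1$-dimensional fibers, we have $a\in X(l_0)$ for $l_0=d(X_a)$, and $X_a=\varphi(a,\U,c)\supseteq$ (up to the relevant formula) a set containing $b$, so $d(b/Ca)\le d(X_a)=l_0$ — here one needs that $\varphi(a,y,c)\in tp(b/Ca)$, which it is, so $d(b/Ca)\le d(\varphi(a,\U,c)) = l_0$. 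Since $\varphi(x,c)$ restricted appropriately, the formula defining $X(l_0)$ is in $tp(a/C)$ (it is a $C$-definable set containing $a$), hence $d(a/C)\le d(X(l_0))$. Finally (Dim 4) gives $d(\{(x,y)\in X: x\in X(l_0)\}) = d(X(l_0)) + l_0 \le d(X)$, so $d(a/C) + d(b/Ca) \le d(X(l_0)) + l_0 \le d(X) = d(a,b/C)$.

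The one genuinely delicate point is making sure the dimension-of-type bounds go in the right direction at each step: the infimum definition (D) means $d(b/Ca)$ is bounded above by the dimension of \emph{any} formula in the type, which is exactly what the upper-bound argument needs, and $d(a/C)$ is likewise bounded above by $d(X(l_0))$ since $X(l_0)$ is a $C$-definable set with $a\in X(l_0)$ — so both bounds are of the form "dimension of type $\le$ dimension of a particular definable set," which is what (D) directly provides. I do not expect to need (Dim 5) or code-definability here; the statement is purely about dimensions of types over arbitrary parameter sets. The main bookkeeping obstacle is simply matching the fibers $X_a$ and $Z_a$ with the correct formulas in $tp(b/Ca)$, which is routine given Lemma \ref{lem:dim}(2) (so that every base point has a fiber of dimension $0$ or $1$, i.e. $\pi_n(X)=X(0)\cup X(1)$).
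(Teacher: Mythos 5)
Your overall route is the same as the paper's -- realize the relevant type dimensions by concrete formulas and push the computation through (Dim~4) -- and your lower-bound half is essentially the paper's argument: locate $a$ in $X(l_0)$ for $l_0=d(X_a)$, bound $d(b/Ca)\le l_0$ and $d(a/C)\le d(X(l_0))$, and use $d(X(l_0))+l_0\le d(X)$. The upper-bound half, however, contains a genuine gap. From ``$Z_a=\theta(\U,c'',a)$ has dimension $d(b/Ca)$'' you infer that \emph{every} nonempty fiber of $Z$ has dimension at most $d(b/Ca)$. This is a non sequitur, and it is false in general when $d(b/Ca)=0$: the formula $\theta(y,c'',x)$ is only guaranteed to cut out a $0$-dimensional set at $x=a$, while at other points of $\psi(\U,c')$ the fiber $\theta(\U,c'',x)$ may be $1$-dimensional. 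For instance, in RCF with $C=\Q$, $a=b=\sqrt2$, $\psi(x):x^2=2$ and $\theta(y,x):(x>0\to y=x)\wedge(x<0\to y=y)$, one has $d(\theta(\U,a))=0=d(b/Ca)$, yet $Z=\{(\sqrt2,\sqrt2)\}\cup(\{-\sqrt2\}\times\U)$, so $Z(1)\neq\emptyset$ and $d(Z)=1$ while your claimed bound would give $d(Z)\le d(\pi_1(Z))+0=0$. So $\max\{l:Z(l)\neq\emptyset\}$ need not be bounded by $d(b/Ca)$ and the chain of inequalities breaks exactly in the case $d(b/Ca)=0$.

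The repair is the move the paper makes in its Case~1: since $d(Z_a)=0$ we have $a\in Z(0)$, so replace $Z$ by $Y:=\{(x,y)\in Z: x\in Z(0)\}$; then $(a,b)\in Y$ and, by (Dim~4), $d(Y)=d(Z(0))+0\le d(\pi_n(Z))\le d(a/C)$, whence $d(a,b/C)\le d(Y)\le d(a/C)+d(b/Ca)$. Note that this step requires $Z(0)$ to be $C$-definable in order to conclude $d(a,b/C)\le d(Y)$ -- the very same assumption you already invoke in the lower bound when you say that $X(l_0)$ ``is a $C$-definable set containing $a$.'' That assumption is not literally contained in (Dim~4) as stated (which only puts $X(l)$ in $\Def(\U)$), and the paper's proof relies on it implicitly as well; you should at least flag that you are using it in \emph{both} halves of the argument, not only where you noticed it.
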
 

\begin{proof}
Let $X\subseteq \U^{n+1}$ be a $C$-definable set such that $(a,b)\in X$ and $d(X)=d(a,b/C)$. 
Let $Z\subseteq \U^n$ be such that $a\in Z$ and $d(a/C)=d(Z)$. 
After possibly replacing $X$ by the subset $X':=\{(x,y)\in X: x\in Z\}$, 
we may further assume that $d(\pi_n(X))=d(a/C)$, keeping the property that $d(X)=d(a,b/C)$.
We split in cases depending on the value $d(b/Ca)$. 

\textit{Case 1:} Suppose that $d(b/Ca)=0$. By possibly restricting $X$ further (adding a formula witnessing that $d(b/Ca)=0$), we may assume that $a\in X(0)$. 
So $d(a/C)= d(X(0))=d(\pi_n(X))$. For $Y:=\{(x,y)\in X: x\in X(0)\}$,
we have that $(a,b)\in Y$ and that $d(a,b/C)\leqslant d(Y)=d(X(0))=d(a/C)$. 
Part $(3)$ of Lemma \ref{lem:dim} implies the equality. 

\textit{Case 2:} Suppose that $d(b/Ca)=1$.
By the case assumption, $a\in X(1)$, and since $d(a/C)=d(\pi_n(X))$, we have $d(a/C)=d(X(1))$. 
Therefore, $d(a,b/C)=d(X)=d(X(1))+1=d(a/C)+1$.
\end{proof}

\begin{proposition}\label{real}
Let $X\subset \U^n$ be a definable set with parameters in $C$. Then there exists a tuple $a\in \U^n$ such that $d(a/C)=d(X)$. 
\end{proposition}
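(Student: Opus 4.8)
The plan is to produce a tuple $a$ in a suitably saturated extension realizing a type $p\in S_n^T(C)$ with $d(p)=d(X)$; since $d(p)$ is computed as an infimum that is attained (by the remark following \eqref{eq:type-dimension}), it suffices to find $p$ in $X$ with $d(\varphi(\U,c))\geq d(X)$ for every $\varphi(x,c)\in p$. Equivalently, one wants to choose $p$ extending the formula defining $X$ so that $p$ avoids every $C$-definable subset of $X$ of strictly smaller dimension. I would carry this out by a compactness argument: let $m=d(X)$ and consider the partial type
\[
\Sigma(x)=\{x\in X\}\cup\{\,x\notin Z : Z\subseteq \U^n\ \text{is $C$-definable and}\ d(Z)<m\,\}.
\]
The key step is to check that $\Sigma$ is consistent, i.e. finitely satisfiable. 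Given finitely many $C$-definable sets $Z_1,\dots,Z_k$ each of dimension $<m$, by (Dim 2) their union $Z_1\cup\dots\cup Z_k$ still has dimension $<m=d(X)$, so $X\setminus(Z_1\cup\dots\cup Z_k)$ is non-empty by (Dim 2) again (if it were empty, $X\subseteq \bigcup Z_i$ would force $d(X)<m$ by part (1) of Lemma \ref{lem:dim}). Hence any point in this difference witnesses the finite subset of $\Sigma$. So $\Sigma$ is consistent and, completing it arbitrarily to a type $p\in S_n^T(C)$, any realization $a\models p$ satisfies $a\in X$; and if $Z=\varphi(\U,c)$ is any $C$-definable set containing $a$, then $Z\notin$ the forbidden family, so $d(Z)\geq m$. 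Therefore $d(a/C)=\inf\{d(Z):Z\ \text{$C$-definable},\ a\in Z\}\geq m$; combined with $d(a/C)\leq d(X)=m$ (since $X$ itself is $C$-definable and contains $a$), we get $d(a/C)=d(X)$.

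A small point to address is that the realization $a$ a priori lies in the monster model $\U$ rather than in any particular small model; but this is exactly the setting of the statement, since $\U$ is $\kappa$-saturated and the type $\Sigma(x)$ is over the small set $C$, so it is realized in $\U$. One should also note that the family of $C$-definable sets of dimension $<m$ is a set (a subset of $\Def(\U)$ indexed by formulas over $C$), so $\Sigma$ is a legitimate partial type and compactness applies.

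I do not expect a serious obstacle here: the only thing to verify carefully is that a finite union of low-dimensional definable sets stays low-dimensional, which is immediate from (Dim 2), and that $X$ is not covered by such a union, which follows from monotonicity (part (1) of Lemma \ref{lem:dim}) together with (Dim 1) in the degenerate case $m=-\infty$ — but when $X=\emptyset$ the statement is trivial with $d(a/C)=-\infty$ vacuously, or rather the proposition should be read for $X$ non-empty, in which case $m\in\mathbb{N}$ and the argument above goes through verbatim. Note that code-definability (Dim 5) is not needed for this proposition; only (Dim 1)–(Dim 2) and Lemma \ref{lem:dim}(1) are used.
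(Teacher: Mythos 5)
Your argument is correct, and it takes a genuinely different route from the paper. The paper proves Proposition \ref{real} by induction on $n$: the fibered axiom (Dim 4) gives $d(X)=d(X(i))+i$ for some $i\in\{0,1\}$, the induction hypothesis produces a point $b$ of the right dimension in $X(i)$, the unary case produces $b_0$ in the fiber $X_b$, and Lemma \ref{lem:additivity} assembles these into $d(b,b_0/C)=d(X)$; a compactness argument of exactly your flavour appears only in the base case $n=1$. You instead run that compactness argument directly in $n$ variables, taking $\Sigma(x)=\{x\in X\}\cup\{x\notin Z : Z \text{ $C$-definable},\ d(Z)<d(X)\}$ and using (Dim 2) together with monotonicity (Lemma \ref{lem:dim}(1)) to see finite satisfiability. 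Since (Dim 2) and Lemma \ref{lem:dim}(1) are stated for definable sets of arbitrary arity, nothing obstructs this, and your proof is shorter and needs strictly fewer hypotheses: it avoids (Dim 3), (Dim 4) and the additivity lemma altogether, whereas the paper's fiberwise construction mirrors the mechanism later exploited in Proposition \ref{lem:1-to-n} but is not needed for the bare existence statement here. Your handling of the side issues is also right: the family of forbidden $Z$ is indexed by formulas over the small set $C$, so $\Sigma$ is a legitimate partial type realized in $\U$ by saturation; the infimum in \eqref{eq:type-dimension} is then bounded below by $d(X)$ because no formula of $tp(a/C)$ can define a set of smaller dimension, and bounded above because $X$ itself is $C$-definable and lies in $tp(a/C)$; and the degenerate case $X=\emptyset$ is excluded exactly as in the paper's own reading of the statement.
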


\begin{proof}
We proceed by induction on $n$ and use the fact that the dimension is fibered. Suppose $n=1$.

If $d(X)=0$, then $X\neq \emptyset$, so take $a\in X$. By definition $d(tp(a/C))=0$.

If $d(X)= 1$, observe that for any formula $\varphi$ over $C$, we have $X=(\varphi\wedge X)\vee (\neg\varphi\wedge X)$. By (Dim 2), one of (or both) $\varphi$ or $\neg\varphi$ is of dimension $1$. Consider the following partial type over $C$, 
$p:=\{\varphi:\; \varphi(\U)\subset X, d(\neg\varphi(\U))<d(X),\;\;\text{$\varphi$ with parameters in $C$}\}$.
By (Dim 2), $p$ is finitely consistent with $X$. 
Let $a$ be a realisation of $p$, then $d(a/C)=d(X)$. Suppose that $d(a/C)=0$, then there would exist $\psi$ a formula with parameters in $C$ such that $d(\psi)=0$ and $\psi(a)$ holds. This implies that $d(\neg\psi\wedge X)=1$, and so $\neg\psi\wedge X\in p$, a contradiction.

Suppose now $X\subset \U^n$ with $n>1$ and $d(X)=d$. We have that $d(X)=d(X(i))+i$ for some $i\in\{0,1\}$. By induction hypothesis there is an tuple $b\in X(i)$ such that $d(b/C)=d(X(i))$. Since $d(X_b)=i$, by the case $n=1$, there is $b_0\in X_{b}$ with $d(b_0/C, b)=i$. Therefore, by Lemma \ref{lem:additivity}, 
\[
d(X)=d(X(i))+i=d(b/C)+d(b_0/C, b)=d(b, b_0/C).
\]
\end{proof}

\subsection{O-minimal theories and dimension}\label{o-minimal}

Let $\cL$ be a language containing the order relation $<$ and let $T$ be any o-minimal theory (we will always assume that $<$ is dense). Our main example will be the theory RCF of real-closed fields, which we regard as a theory in the language of ordered rings $\cL_{or}:=\{<,+,\cdot,0,1\}$. Let $\U$ be a monster model of $T$. The topological closure of a subset $X\subseteq \U^n$ is written $\overline{X}$ and its interior $Int(X)$.  Notice that both $\overline{X}$ and $Int(X)$ are $c(X)$-definable. 

For a definable set $X\subseteq \U^n$, we denote by $\dim(X)$ the dimension of $X$. 
It corresponds to the biggest integer $k\leq n$ for which there is a coordinate projection $\pi:\U^n\to\U^k$ such that $Int(\pi(X))\neq\emptyset$. This dimension function on $\U$ is fibered  \cite[Chapter 4]{vandendries1998}, namely it satisfies the first four properties of Definition \ref{def:dim}. 
Moreover since one can express that a subset of $\U$ is of dimension $1$, the function $\dim$ satisfies (Dim 5). 
Indeed, for a definable set $X\subseteq \U^{n+1}$, we have that
\[
X(1)=\{x\in \pi_n(X): \exists a\exists b\;\; [a<b\, \wedge \,(a,b)\subseteq X_x]\}.
\]
So, $X(1)$ is $c(X)$-definable. 
Since $X(0)=\pi_n(X)\setminus X(1)$, we have that $X(0)$ is $c(X)$-definable as well. 

Let $A\subset \U$ and $a\in \U$. Since $T$ is o-minimal, $tp(a/A)$ is determined by the quantifier-free order type of $a$ over $A$. 
We say that $z$ realizes the type $a^+$ over $A$ whenever $z$ is a realization of the $a$-definable type $\{(a<x<c) :c\in A\text{ and }c>a\}$. 
Analogously, we say that $z$ realizes the type $-\infty$ over $A$ whenever
$z$ is a realization of the $\emptyset$-definable type $\{(x<b) : b\in A\}$.

\begin{lemma}\label{lem:polynomial} Let $K$ be a real closed field and $Y\subseteq K^n$ be a semi-algebraic set. Suppose that $\dim(Y)<n$. Then there is a non-trivial polynomial $P\in K[x]$ with $x=(x_1,\ldots,x_n)$ such that $Y\subseteq \{x\in K^n: P(x)=0\}$. 
\end{lemma}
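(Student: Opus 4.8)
The plan is to deduce the statement from quantifier elimination for RCF, handling $Y$ piece by piece after writing it in disjunctive normal form. First I would record that, by the description of the dimension function recalled in Subsection \ref{o-minimal}, the hypothesis $\dim(Y)<n$ is equivalent to $Int(Y)=\emptyset$, since the only coordinate projection $K^n\to K^n$ is the identity. By quantifier elimination, $Y$ can be written as a finite union $Y=\bigcup_{j}S_j$ in which each $S_j$ is an intersection of finitely many sets of the form $\{x\in K^n:p(x)=0\}$, $\{x:p(x)\neq 0\}$, $\{x:p(x)>0\}$, $\{x:p(x)<0\}$ with $p\in K[x_1,\ldots,x_n]$; before doing so one splits the relations $p\leq 0$ and $p\geq 0$ as $p<0\vee p=0$ and $p>0\vee p=0$, which is the one bit of bookkeeping that matters. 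Since $S_j\subseteq Y$, monotonicity of the interior gives $Int(S_j)=\emptyset$ for every $j$.

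Next I would analyse a single piece $S_j$. If every equality constraint $\{p=0\}$ occurring in $S_j$ has $p$ the zero polynomial, then those constraints are vacuous and all remaining constraints define open subsets of $K^n$ (polynomials are continuous for the order topology), so $S_j$ is open; having empty interior, $S_j=\emptyset$ and it may be discarded. Otherwise some equality constraint $\{p=0\}$ in $S_j$ has $p\neq 0$ in $K[x_1,\ldots,x_n]$, and then $S_j\subseteq\{x:p(x)=0\}$; I set $P_j:=p$, a non-trivial polynomial.

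Finally, put $P:=\prod P_j$, the product taken over those $j$ with $S_j\neq\emptyset$ (an empty product being $1$). Because $K[x_1,\ldots,x_n]$ is an integral domain and each $P_j$ is non-trivial, $P$ is non-trivial, and since each $P_j$ divides $P$ we obtain $Y=\bigcup_j S_j\subseteq\bigcup_j\{x:P_j(x)=0\}\subseteq\{x:P(x)=0\}$, as required. The only genuine obstacle is the dichotomy for the pieces $S_j$: one has to arrange, via the preliminary splitting of $\leq$ and $\geq$, that a piece which is not cut down by a non-trivial polynomial equation is honestly open, so that the hypothesis $Int(Y)=\emptyset$ forces it to be empty. (Alternatively, one could invoke the equality of the semialgebraic dimension of $Y$ with the dimension of its Zariski closure: then $\overline{Y}$ in the Zariski topology is a proper subvariety of $K^n$, so its vanishing ideal contains a non-trivial polynomial; but the argument above is self-contained.)
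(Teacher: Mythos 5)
Your proposal is correct and follows essentially the same route as the paper: decompose $Y$ into basic semialgebraic pieces, observe that any piece not constrained by a non-trivial polynomial equation is open and hence contradicts $\dim(Y)<n$, and multiply the surviving equations. The only difference is cosmetic bookkeeping in the choice of normal form (the paper uses pieces with a single equation and strict inequalities, you allow several equations and $\neq$ constraints), which does not affect the argument.
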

\begin{proof}
Since $Y$ is semi-algebraic, it is a finite union of non-empty sets $Y_1\cup\ldots\cup Y_m$ of the form 
\[
Y_i:=\{a\in K^n: \bigwedge_{j=1}^{n_i} Q_{ij}(a)>0 \wedge P_i(a)=0\}, \text{ with $Q_{ij}, P_i\in K[x]$.} 
\]
Suppose that for some $i$ , the polynomial $P_i$ is the zero polynomial. Then $Y_i$ would be open and so $\dim(Y_i)=n$ (and so $\dim(Y)=n$) a contradiction.
So we take $P(x)=\prod_{i=1}^\ell P_i(x)$, and the set $Y$ is contained in the zero locus of $P$. 
\end{proof}

\subsection{Closed ordered differential fields}

Hereafter we let $\cL_\delta:=\cL_{or}\cup\{\delta\}$ be the language of ordered differential rings, where $\delta$ is a unary function symbol. Let $\cL_{or}^+:=\cL_{or}\cup \{{\;}^{-1}\}$ be the language of ordered fields and $\cL_\delta^+:=\cL_{or}^+\cup\{\delta\}$ the  	language of ordered differential fields. The $\cL_\delta^+$-theory of ordered differential fields is simply the $\cL_{or}^+$-theory of ordered fields together with the axioms for a derivation $\delta$ and CODF is its model completion \cite[section 2]{singer1978}. Since CODF is the model completion of a universal theory, it admits quantifier elimination in $\cL_\delta^+$ and it is easy to check that it also admits quantifier elimination in $\cL_\delta$.

Let $\U$ be a monster model of CODF. 
In particular, $\U$ is also a monster model of RCF. 
We equip $\U$ with the order topology. Given $a,b\in \U^n$ and $\epsilon\in \U$ with $\epsilon>0$, we abbreviate by $\vert a-b\vert<\epsilon$
the formula $\bigwedge_{i=1}^n \vert a_i-b_i\vert <\epsilon$.

For a subfield $F\subseteq \U$ and a subset $A\subseteq \U$, the smallest real closed field in $\U$ containing both $F$ and $A$ is denoted by $F(A)^{rc}$. When $F$ is a differential field, the smallest differential subfield of $\U$ containing both $F$ and $A$ is denoted by $F\langle A\rangle$. Given $a=(a_1,\ldots,a_n)\in \U^n$ we let $F(a)^{rc}$ (resp. $F\langle a\rangle$) denote $F(\{a_1,\ldots,a_n\})^{rc}$ (resp. $F\langle\{a_1,\ldots,a_n\}\rangle$). 

For $n\geqslant 0$ and $a\in \U$, we define 
\[
\delta^n(a):=\underbrace{\delta\circ\cdots\circ\delta}_{n \text{ times}}(a), \text{ with $\delta^0(a):=a$,}
\]
and $\bar{\delta}^n(a)$ as the finite sequence $(\delta^0(a),\delta(a),\ldots,\delta^n(a))\in \U^{n+1}$. We let $\bar{\delta}(a)$ denote the infinite sequence $(\delta^n(a))_{n\geqslant 0}$. 
We will denote by $K\{x\}$ the differential ring of differential polynomials (in one differential indeterminate $x$) over $K$. Abusing of notation, we identify $K\{x\}$ with the ordinary polynomial ring $K[\delta^j(x): j\in \N]$ in indeterminates $\delta^j(x)$, 
endowed with the natural derivation extending the one of $K$ with the convention that $\delta^0(x)=x$ and $\delta(\delta^j(x))=\delta^{j+1}(x)$. 
The order of a differential polynomial $f(x)\in K\{x\}$, denoted by $\ord(f)$, is the smallest integer $n\geqslant 0$ such that there is a polynomial $F\in K[\bar{\delta}^n(x)]$ such that $f(x)=F(\bar{\delta}^n(x))$. 
The (algebraic) polynomial $F$ is unique and will be denoted hereafter by $f^*$. 
Below, $\frac{\partial}{\partial \delta^{n}(x)}$ denotes the usual derivative of polynomials 
in $K[\delta^j(x): j\in \N]$ with respect to the variable $\delta^n(x)$.

Let us recall the axiomatisation of CODF given in \cite{singer1978}. An ordered differential field $K$ is a model of CODF if it is real-closed and given any $f, g_1,\cdots,g_m\in K\{x\}\setminus \{0\}$, with $n:=\ord(f)\geqslant \ord(g_i)$ for all $1\leqslant i \leqslant m$, if there is $c\in K^{n+1}$ such that 
\[
f^*(c)=0 \wedge \left(\frac{\partial}{\partial \delta^{n}(x)}f^*\right)(c)\neq 0 \wedge \bigwedge_{i=1}^m g_i^*(c)>0,
\]
then there exists $a\in K$ such that $f(a)=0\wedge\bigwedge_i^m g_i(a)>0$. Equivalently \cite[Theorem 4.1, Example(s) 2.22]{guzy-point2010}, $K$ is a model of CODF if it is real-closed and given $f\in K\{x\}\setminus \{0\}$ with $n:=\ord(f)$ and $\epsilon >0$, if there exists $c\in K^{n+1}$ such that 
\[
f^*(c)=0\wedge \left(\frac{\partial}{\partial \delta^{n}(x)}f^*\right)(c)\neq 0,
\] 
then there exists $a$ such that $f(a)=0\wedge\vert \bar{\delta}^{n}(a)-c\vert<\epsilon$. 

\

Let $\cL_\delta^-=\{<,+,-,\cdot,\delta\}$ be the reduct of $\cL_\delta$ where the constants have been removed. Let $\varphi(x_1,\cdots,x_m)$ be a quantifier-free $\cL_\delta^-$-formula with $\ell(x_i)=1$ for all $i\in\{1,\ldots,m\}$. For each $i\in\{1,\ldots,m\}$, let $n_i\geqslant 0$ be the largest integer such that $\delta^n_i(x_i)$ is a term of $\varphi$. For $\overline{x}_i$ a tuple of variables with $\ell(\overline{x}_i)=n_i+1$, we write $\varphi^\ast$ for the $\cL_{or}$-formula $\varphi^*(\overline{x}_1,\cdots,\overline{x}_n)$ such that the $\cL_\delta^-$-formula $\varphi(x_1,\cdots,x_m)$ is $\varphi^*(\bar{\delta}^{n_1}(x_1),\cdots,\bar{\delta}^{n_m}(x_m))$. We extend this functor to quantifier-free $\cL_\delta$-formulas with parameters (constants treated as such) as follows. Let $\psi(x_1,\cdots,x_m)$ be a quantifier-free $\cL_\delta$-formula with parameters $c_1,\ldots,c_k$ in $\U$. Then $\psi$ is of the form $\varphi(x_1,\cdots,x_m,c_1,\ldots,c_k)$ with $\varphi(x_1,\cdots,x_m,z_1,\ldots,z_k)$ a quantifier-free $\cL_\delta^-$-formula without parameters. Define $\psi^*$ as the $\cL_{or}$-formula $\varphi^*(\overline{x}_1,\cdots,\overline{x}_m,\bar{c}_1,\ldots, \overline{c}_k)$ with $\bar{c}_i=\bar{\delta}^{\ell(\bar{z}_i)}(c_i)$, where $\bar{z}_i$ is the tuple of variables appearing in $\varphi^*(\overline{x}_1,\cdots,\overline{x}_m, \bar{z}_1,\cdots,\overline{z}_k)$. 
\

For any $B\subseteq \U$, the $\cL_\delta$-type of a tuple $a=(a_1,\ldots,a_n)\in \U^n$ over $A$ is denoted by $tp_\delta(a/B)$, 
whereas $tp(a/B)$ denotes its restriction to $\cL_{or}$. To distinguish between codes in $\cL_{or}$ and $\cL_\delta$, we use the following notational convention: given an $\cL_\delta$-definable set $X$, we let $c_\delta(X)$ denote a code for $X$ in $\cL_\delta$ and, for an $\cL_{or}$-definable set $X$, we let $c(X)$ denote a code for $X$ in $\cL_{or}$.

Throughout $K$ will always denote a submodel of CODF. A useful observation is the following lemma. 

\begin{lemma} \label{forgetful}
	Let $a\in \U$ and $B\subseteq K^{eq}$. 
	If for every integer $n\geqslant 0$ the type $tp(\bar{\delta}^n(a)/K)$ is $B$-definable, 
	then the type $tp_\delta(a/K)$ is $B$-definable. 
\end{lemma}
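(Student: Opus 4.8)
The plan is to reduce the $\cL_\delta$-definability of $\tp_\delta(a/K)$ to the $\cL_{or}$-definability of the types $\tp(\bar\delta^n(a)/K)$ using quantifier elimination for CODF in the language $\cL_\delta$. The key syntactic tool is the $\varphi \mapsto \varphi^*$ functor introduced just before the statement: any quantifier-free $\cL_\delta$-formula $\psi(x,y)$ (with $\ell(x)=1$, say, and $y$ a parameter tuple) becomes, after replacing each occurrence of $\delta^j(x)$ by a fresh variable and likewise for the parameters, an $\cL_{or}$-formula $\psi^*(\bar x, \bar y)$ such that $\psi(a,c)$ holds in $\U$ iff $\psi^*(\bar\delta^{n}(a), \bar\delta^{m}(c))$ holds, where $n$ is the largest order occurring in $\psi$ and the $\bar\delta$'s are the appropriate truncations.

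First I would fix an arbitrary $\cL_\delta$-formula $\chi(x,y)$ without parameters, with $\ell(x)=1$ (the general case $\ell(x)=\ell>1$ is handled the same way, just with more bookkeeping, or by iterating Lemma \ref{lem:dcltype}/\ref{transitivity} as in the rest of the paper). By quantifier elimination for CODF in $\cL_\delta$, $\chi(x,y)$ is equivalent modulo CODF to a quantifier-free $\cL_\delta$-formula; let $n\geqslant 0$ be the largest integer such that $\delta^n(x)$ occurs in it. Applying the $*$-functor, there is an $\cL_{or}$-formula $\chi^*(\bar x, \bar y)$ with $\ell(\bar x) = n+1$ such that for all $a\in\U$ and all parameters $b$ from $K$,
\[
\U \models \chi(a,b) \iff \U \models \chi^*(\bar\delta^n(a), \bar\delta^{m}(b)),
\]
where $\bar\delta^m(b)$ denotes the tuple of derivatives of the entries of $b$ up to the appropriate orders (all of which lie in $K$ since $K$ is a differential subfield). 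Now by hypothesis $\tp(\bar\delta^n(a)/K)$ is $B$-definable, so applied to the $\cL_{or}$-formula $\chi^*(\bar x,\bar y)$ there is an $\cL_{or}$-formula $\theta(\bar y)$ with parameters in $B$ such that $\chi^*(\bar\delta^n(a),\bar\delta^m(b)) \in \tp(\bar\delta^n(a)/K)$ iff $\U\models\theta(\bar\delta^m(b))$. Since the map $b \mapsto \bar\delta^m(b)$ is $\emptyset$-definable in $\cL_\delta$, the formula $\psi(y) := \theta(\bar\delta^m(y))$ is an $\cL_\delta$-formula with parameters in $B$ (here I must note that $B$ may live in $K^{eq}$, so strictly speaking $\theta$ is an $\cL_{or}^{eq}$-formula over $B$ and $\psi$ an $\cL_\delta^{eq}$-formula over $B$, which is exactly what the definition of $B$-definability requires). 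Combining the two equivalences, $\chi(a,b) \in \tp_\delta(a/K)$ iff $\U\models\psi(b)$, which is the required scheme $d_p y\,\chi(x,y)$; since $\chi$ was arbitrary, $\tp_\delta(a/K)$ is $B$-definable.

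The only genuine subtlety — and the step I would be most careful about — is matching up the variable tuples across the $*$-functor: one must make sure that the largest order $n$ appearing in (the quantifier-free form of) $\chi$ is used consistently so that $\chi^*$ really depends only on $\bar\delta^n(a)$, and that the parameter side is handled with the correct truncations $\bar\delta^{\ell(\bar z_i)}(c_i)$ as in the setup paragraph preceding the lemma. There is also the small point that, since the statement quantifies over all $n$, we need the definability hypothesis precisely for the value of $n$ attached to each $\chi$ — but that is exactly what "for every integer $n\geqslant 0$" provides. No further model-theoretic input (not even Lemma \ref{forgetful}'s neighbors) is needed beyond quantifier elimination for CODF and the bookkeeping of the $*$-functor.
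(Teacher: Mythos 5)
Your proposal is correct and follows essentially the same route as the paper's proof: reduce to quantifier-free $\cL_\delta$-formulas via quantifier elimination, pass through the $*$-functor to an $\cL_{or}$-formula in the variables $\bar{\delta}^n(x)$, invoke the $B$-definability of $tp(\bar{\delta}^n(a)/K)$ to get an $\cL_{or}$-definition $\theta(\bar{y})$, and substitute $\bar{\delta}(y)$ back into $\theta$ to obtain the $\cL_\delta$-definition. The bookkeeping points you flag (consistent truncation orders on both the variable and parameter sides) are exactly the ones the paper handles by choosing a single sufficiently large $\ell$.
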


\begin{proof} By quantifier elimination it suffices to show that every quantifier free $\cL_\delta$-formula has a definition. 
	Let $y=(y_1,\ldots,y_m)$ and $\varphi(x,y)$ be a quantifier-free $\cL_\delta$-formula without parameters. Let $\ell$ be a sufficiently large integer such 
	that the formula $\varphi^\ast$ may be expressed as a formula $\varphi^\ast(x_0,\ldots,x_\ell,\bar{y})$ where 
	$\bar{y}=(\bar{y}_1,\ldots,\bar{y}_m)$ with $\bar{y}_i:=(y_{i1},\ldots,y_{i\ell})$. Since $tp(\bar{\delta}^\ell(a)/K)$ is $B$-definable, let $\psi(\bar{y})$ be an $\cL_{or}$-formula with parameters over $B$ such that for all $\bar{b}\in K^{m\ell}$
\[
\U\models \varphi^*(\bar{\delta}^\ell(a),\bar{b}) \Leftrightarrow \U\models \psi(\bar{b}).
\]
Define now the $\cL_\delta$-formula (with parameters in $B$) $\theta(y_1,\ldots,y_m)$ as $\psi(\bar{\delta}^\ell(y_1), \ldots, \bar{\delta}^\ell(y_m))$. We show that $\theta$ is a definition for $\varphi$. Indeed for every $u=(u_1,\ldots,u_m)\in K^m$
\[
\begin{array}{lll}
\U\models \varphi(a,u) 	& \Leftrightarrow & \U\models \varphi^*(\bar{\delta}^\ell(a),\bar{\delta}^\ell(u_1), \ldots, \bar{\delta}^\ell(u_m))\\
										& \Leftrightarrow & \U\models \psi(\bar{\delta}^\ell(u_1), \ldots, \bar{\delta}^\ell(u_m))\\																															& \Leftrightarrow & \U\models \theta(u_1,\ldots,u_m).
\end{array}
\]
\end{proof}

In models of CODF, we have the following folklore topological property. For the reader convenience, we give a proof below.
It is similar to the proof of \cite[Lemma 3.12 (1)]{guzy-point2010} and \cite[Corollary, 3.13]{guzy-point2010}. 

\begin{lem} [Density of primitives]\label{constants}
For any $b\in K$, any non-empty open subset $O$ of $K^{n}$ and any $n\in \N^{*}$, there is 
	$a\in K$ such that $\bar \delta^{n-1}(a)\in O$ and $\delta^{n}(a)=b$. 
	\end{lem}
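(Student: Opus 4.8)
The plan is to reduce the statement to the CODF axiom scheme (in its topological formulation recalled just above) by choosing a suitable differential polynomial whose solutions are exactly primitives, and then to control the derivatives of the solution via the $\epsilon$-neighbourhood that CODF provides. Concretely, fix $b\in K$, a non-empty open set $O\subseteq K^n$, and a point $c=(c_0,c_1,\ldots,c_{n-1})\in O$; since $O$ is open we may pick $\epsilon>0$ so that the box of radius $\epsilon$ around $c$ lies inside $O$. The natural differential polynomial to consider is $f(x):=\delta^n(x)-b\in K\{x\}$, which has order exactly $n$, so $f^*(x_0,\ldots,x_n)=x_n-b$ and $\frac{\partial}{\partial\delta^n(x)}f^* = 1$.

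Next I would feed this $f$ into the second (topological) axiomatisation of CODF. We need a point $\tilde c\in K^{n+1}$ with $f^*(\tilde c)=0$ and $(\partial f^*/\partial x_n)(\tilde c)\neq 0$: take $\tilde c:=(c_0,c_1,\ldots,c_{n-1},b)$. Then $f^*(\tilde c)=b-b=0$ and the derivative condition holds trivially since $\partial f^*/\partial x_n\equiv 1$. The axiom then yields $a\in K$ with $f(a)=0$ and $|\bar\delta^n(a)-\tilde c|<\epsilon$. The first condition says $\delta^n(a)=b$, exactly as wanted. The second condition says in particular that $|\delta^j(a)-c_j|<\epsilon$ for all $j=0,\ldots,n-1$, i.e. $\bar\delta^{n-1}(a)$ lies in the box of radius $\epsilon$ around $c$, hence in $O$. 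This gives the claim.

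The only subtlety—and the place to be slightly careful rather than a genuine obstacle—is the edge case $n=1$: there $f(x)=\delta(x)-b$ has order $1$, $f^*(x_0,x_1)=x_1-b$, and everything above goes through verbatim with $c=c_0\in O\subseteq K$. One should also note that the box around $c$ is $c_j$-definable and contained in $O$ by openness, so no issue arises in invoking the axiom scheme (which is stated for an arbitrary $\epsilon>0$). I expect essentially no hard step here; the content is entirely in recognising that "being a primitive of $b$" is cut out by the order-$n$ differential polynomial $\delta^n(x)-b$ whose separant is the constant $1$, so the nonvanishing-derivative hypothesis of the CODF axioms is automatic.
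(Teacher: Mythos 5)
Your proposal is correct and follows essentially the same route as the paper: both take $f(x)=\delta^n(x)-b$, note that $f^*(c,b)=0$ with separant identically $1$, shrink $\epsilon$ so the box around $c$ lies in $O$, and invoke the topological form of the CODF axioms to get $a$ with $\delta^n(a)=b$ and $\bar\delta^{n-1}(a)\in O$.
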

\begin{proof}

Consider the differential polynomial $f(x):=\delta^{n}(x)-b$ and let $c\in O$. We have that
\[
f^*(c,b)=0 \wedge \left(\frac{\partial}{\partial \delta^{n}(x)}f^*\right)(c,b)=1\neq 0.
\]
Let $\epsilon>0$ in $K$ be such that the ball $\{x\in K^n : \vert x-c\vert<\epsilon\}\subseteq O$. By the axiomatization of CODF, there is an element $a\in K$ such that $f(a)=0$ and $\vert \bar{\delta}^n(a)-c\vert<\epsilon$. Thus $\delta^n(a)=b$ and $\bar\delta^{n-1}(a)\in O$.  
\end{proof}

Let us recall the following lemma about differential fields. 

\begin{lemma}[{\cite[Lemma 1.9]{marker1996}}]\label{lem:algind} 
	Let $a\in \U$.
	Suppose $\delta^{n+1}(a)$ is algebraic over $K(\bar{\delta}^n(a))$. 
	Then for all $k\geqslant n+1$, $\delta^k(a)$ is algebraic over $K(\bar{\delta}^n(a))$
	and also belongs to $K(\bar{\delta}^{n+1}(a))$. 
\end{lemma}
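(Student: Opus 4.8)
The plan is to show that the field $L_1 := K(\bar{\delta}^{n+1}(a))$ is a differential subfield of $\U$; both assertions of the lemma then follow immediately. Write $L := K(\bar{\delta}^n(a))$, so that $L_1 = L(\delta^{n+1}(a))$ is, by hypothesis, a finite algebraic extension of $L$, and in particular every element of $L_1$ is algebraic over $K(\bar{\delta}^n(a))$. Granting that $\delta(L_1) \subseteq L_1$, an immediate induction on $k$ gives $\delta^k(a) \in L_1$ for all $k \geq 0$ (the case $k \leq n+1$ holds by construction of $L_1$, and $\delta^{k+1}(a) = \delta(\delta^k(a)) \in \delta(L_1) \subseteq L_1$), which yields both that $\delta^k(a) \in K(\bar{\delta}^{n+1}(a))$ and that $\delta^k(a)$ is algebraic over $K(\bar{\delta}^n(a))$ for every $k \geq n+1$.

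First I would reduce the closure of $L_1$ under $\delta$ to a single computation. By linearity of $\delta$, the Leibniz rule, and the quotient rule, the set $\{u \in L_1 : \delta(u) \in L_1\}$ is a subfield of $L_1$ containing $\mathbb{Q}$; hence it suffices to check that $\delta(g) \in L_1$ for $g$ ranging over a generating set of $L_1$ over $\mathbb{Q}$. For $g \in K$ this holds since $\delta(K) \subseteq K \subseteq L_1$, and for $g = \delta^i(a)$ with $i \leq n$ it holds since $\delta(\delta^i(a)) = \delta^{i+1}(a) \in L_1$. The only remaining generator is $g = \delta^{n+1}(a)$, so the whole proof comes down to showing $\delta^{n+2}(a) \in L_1$.

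For this, let $p(X) = \sum_{j=0}^{d} c_j X^{j} \in L[X]$ be the minimal polynomial of $\delta^{n+1}(a)$ over $L$, so that $\sum_{j} c_j\,\delta^{n+1}(a)^{j} = 0$. Applying $\delta$ and the chain rule gives
\[
\delta^{n+2}(a)\cdot p'\bigl(\delta^{n+1}(a)\bigr) \;=\; -\sum_{j=0}^{d} \delta(c_j)\,\delta^{n+1}(a)^{j}.
\]
Each $c_j$ is a rational function of $\bar{\delta}^n(a)$ with coefficients in $K$, so, using $\delta(K) \subseteq K$ and $\delta(\delta^i(a)) = \delta^{i+1}(a)$ for $i \leq n$, we get $\delta(c_j) \in K(\bar{\delta}^{n+1}(a)) = L_1$; hence the right-hand side lies in $L_1$. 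On the other hand $p'(\delta^{n+1}(a)) \neq 0$: since $\U$ is real closed it has characteristic zero, so the minimal polynomial $p$, of degree $d \geq 1$, has nonzero derivative $p'$ with $\deg p' < d$; as $p$ cannot divide the nonzero polynomial $p'$, its root $\delta^{n+1}(a)$ is not a root of $p'$. Thus $p'(\delta^{n+1}(a))$ is invertible in the field $L_1$, and we may solve for $\delta^{n+2}(a) \in L_1$, completing the argument.

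The computation is routine; the one point that deserves care is that $L$ itself need not be closed under $\delta$ (a priori $\delta^{n+1}(a) \notin L$), so one must work inside the larger field $L_1$ throughout and, in particular, record that $\delta$ carries the coefficients $c_j \in L$ into $L_1$ rather than into $L$. I do not expect any real obstacle beyond this bookkeeping.
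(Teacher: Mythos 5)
Your proof is correct, and it is the standard argument (differentiating the minimal polynomial of $\delta^{n+1}(a)$ over $K(\bar{\delta}^n(a))$ and solving for $\delta^{n+2}(a)$, using that $p'$ does not vanish at a root of the minimal polynomial in characteristic zero); the paper itself gives no proof but cites Marker's Lemma 1.9, whose proof is exactly this. The bookkeeping point you flag — that $\delta$ sends the coefficients $c_j$ into $L_1$ rather than $L$ — is handled correctly.
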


We will also need the following lemma about definable sets in CODF, which is essentially contained in \cite[Lemma 2.1]{point2011}.
The proof in \cite{point2011} uses semi-algebraic cell decomposition and that RCF has finite Skolem functions. 
That last property means that for every definable set $X\subseteq K^{n+1}$,
if for every $a\in \pi_n(X)$, $X_a$ is of finite cardinality $d$, then
it holds that  
there are definable functions $f_i:\pi_n(X)\to K$ (where $1\leq i \leq n$), such that $X_a=\bigcup_{i=1}^d f_i(a)$.

A similar result also holds in the broader context of topological differential fields given in \cite{guzy-point2010}. Both for completeness and the reader's convenience, we include here a proof for CODF (from which the more general proof can be easily extracted). 
We will use that RCF has a fibered dimension (which can be shown directly, without using cell decomposition) 
and finite Skolem functions. 
We also need the fact that for a definable set $X$, it holds that $\dim(X\setminus Int(X))<\dim X$ and 
the fact that on an open definable set, definable functions are discontinuous on a subset of smaller dimension.
 
\begin{notation} Let $A\subset \U$, then $A^{\nabla_{n}}:=\{\bar{\delta}^{n-1}(z):z\in A\}$ (if the context is clear we drop the index $n$ and simply use $A^{\nabla}$).
\end{notation}

\begin{lemma}\label{lem:density} 
	Let $X \subseteq K$ be a non-empty $\cL_\delta$-definable set defined by a quantifier-free formula $\varphi$ and let $n$ be the number of  free variables of the formula $\varphi^*$. 
	Then there is an $\cL_{or}$-definable set $X^{\circledast}\subseteq K^n$ such that $X^\nabla$
	is contained and dense in $X^{\circledast}$. In particular, $X$ is dense and contained in $\pi_1(X^{\circledast})$. 
\end{lemma}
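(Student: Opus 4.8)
The plan is to take $X^{\circledast}$ to be the topological closure $\overline{X^{\nabla}}$ of $X^{\nabla}$ in $K^{n}$; then $X^{\nabla}$ is by definition contained and dense in it, and the whole content of the lemma becomes the assertion that $\overline{X^{\nabla}}$ is $\cL_{or}$-definable. (The ``in particular'' clause is then automatic: since $\pi_{1}(\bar{\delta}^{n-1}(z))=z$ we have $\pi_{1}(X^{\nabla})=X$, so by continuity $X\subseteq\pi_{1}(X^{\circledast})\subseteq\overline{X}$.) I would prove this by induction on $n$. If $n=1$ the formula $\varphi$ contains no occurrence of $\delta$, so $X^{\nabla}=X=\varphi^{*}(K)$ is already semi-algebraic and there is nothing to do. So assume $n\geq 2$, put $Y:=\varphi^{*}(K^{n})\subseteq K^{n}$ and $R:=\{\bar{\delta}^{n-1}(z):z\in K\}$; by the definition of $\varphi^{*}$ one has $X^{\nabla}=Y\cap R$, and by density of primitives (Lemma \ref{constants}) the set $R$ is dense in $K^{n}$.

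Apply semi-algebraic cell decomposition to $Y$ and write it as a finite disjoint union of cells; since closure commutes with finite unions it suffices to show $\overline{X^{\nabla}\cap D}$ is $\cL_{or}$-definable for each cell $D$, and $X^{\nabla}\cap D=R\cap D$ because $D\subseteq Y$. If $D$ is an open cell then $R\cap D$ is dense in $D$, so $\overline{X^{\nabla}\cap D}=\overline{D}$, which is $\cL_{or}$-definable. If $D=\mathrm{graph}(g)$ for a continuous semi-algebraic $g$ on an open base cell $C\subseteq K^{n-1}$, let $P(\overline{x},y)\in K[\overline{x},y]$ be a denominator-cleared, $y$-squarefree polynomial vanishing on $D$ and let $G\subseteq C$ be the $\cL_{or}$-definable set of $w'\in C$ at which $g(w')$ is a simple root of $P(w',\cdot)$; by $\dim(C\setminus \mathrm{Int}\,C)<\dim C$ and the fact that discriminants vanish only on a lower-dimensional set, $C\setminus G$ has dimension $\leq n-2$. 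For $w'\in G$ the differential polynomial $f(x):=P(\bar{\delta}^{n-2}(x),\delta^{n-1}(x))$ has order $n-1$ and the point $c:=(w',g(w'))$ satisfies $f^{*}(c)=0$ and $\frac{\partial}{\partial\delta^{n-1}(x)}f^{*}(c)=\frac{\partial P}{\partial y}(c)\neq 0$; the CODF axioms then produce, for each $\varepsilon>0$, an $a\in K$ with $f(a)=0$ and $|\bar{\delta}^{n-1}(a)-c|<\varepsilon$, hence $\bar{\delta}^{n-1}(a)\in R\cap D$ (for $\varepsilon$ small, since near $c$ the variety $\{P=0\}$ coincides with $D$). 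Thus $\mathrm{graph}(g|_{G})\subseteq\overline{R\cap D}$, and as $G$ is dense in $C$ and $g$ is continuous, this forces $\overline{X^{\nabla}\cap D}=\overline{D}$, again $\cL_{or}$-definable.

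Every remaining cell projects into the set $E\subseteq K^{n-1}$ which is the union of the base cells of dimension $<n-1$ together with the bad loci $C\setminus G$ from above; $E$ is $\cL_{or}$-definable of dimension $\leq n-2$. By Lemma \ref{lem:polynomial} there is a nonzero $P_{0}\in K[x_{0},\dots,x_{n-2}]$ with $E\subseteq\{P_{0}=0\}$; let $r\leq n-2$ be the largest index such that $x_{r}$ occurs in $P_{0}$. On the part of $X^{\nabla}$ over $E$ we then have $P_{0}(\bar{\delta}^{n-2}(z))=0$, so $\delta^{r}(z)$ is algebraic over $K(\bar{\delta}^{r-1}(z))$; by Marker's Lemma \ref{lem:algind} every $\delta^{k}(z)$ with $k\geq r$ lies in $K(\bar{\delta}^{r}(z))$, and in fact (differentiating $P_{0}=0$) $\delta^{r+1}(z),\dots,\delta^{n-1}(z)$ are rational functions of $\bar{\delta}^{r}(z)$ off the locus $\frac{\partial P_{0}}{\partial x_{r}}=0$. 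Using these identities one rewrites $\varphi(z)$ and the condition ``$\bar{\delta}^{n-2}(z)\in E$'' as a quantifier-free $\cL_{\delta}$-condition on $\bar{\delta}^{r}(z)$ alone, of the form $\bar{\delta}^{r}(z)\in E_{0}$ for an $\cL_{or}$-definable $E_{0}\subseteq K^{r+1}$ (together with a residual lower-dimensional locus coming from $\frac{\partial P_{0}}{\partial x_{r}}=0$, absorbed recursively). Since $r+1\leq n-1<n$, the induction hypothesis applies to $\{z:\bar{\delta}^{r}(z)\in E_{0}\}$ and gives that $\overline{R_{r+1}\cap E_{0}}$ is $\cL_{or}$-definable with $R_{r+1}\cap E_{0}$ dense in it, where $R_{r+1}:=\{\bar{\delta}^{r}(z):z\in K\}$; pushing this forward along the semi-algebraic map $\Lambda$ reconstructing $\bar{\delta}^{n-1}(z)$ from $\bar{\delta}^{r}(z)$ yields the $\cL_{or}$-definable description of $\overline{X^{\nabla}}$ over this part. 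Assembling the finitely many contributions completes the induction.

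The step I expect to be the main obstacle is making the ``boundary'' case rigorous: one must keep track, through cell decomposition, of all the semi-algebraic functions involved ($g$, the rational $Q_{k}$, the map $\Lambda$) so that each is continuous on the cell under consideration, since otherwise the closure of a graph over a dense subset need not equal the closure over the whole (closed) base; organising this descent — on $n$ together with a secondary descent on dimension, or entirely via iterated cell decomposition — together with the verification in the graph case that the CODF axioms really produce points on the cell $D$ rather than on a neighbouring component of $\{P=0\}$, is where the real work lies.
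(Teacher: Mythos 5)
Your strategy is sound and, in its essential ingredients, the same as the paper's: both arguments rest on density of primitives (Lemma \ref{constants}) for the full-dimensional part, on Lemma \ref{lem:polynomial} to trap the thin part in a hypersurface $\{P_0=0\}$, and on differentiating that relation to express $\delta^{r+1}(z),\dots,\delta^{n-1}(z)$ as rational functions of $\bar{\delta}^{r}(z)$, followed by a descent to fewer variables. The packaging differs: you induct on $n$ and use cell decomposition, handling graph cells over open bases directly via the CODF axiom together with simple-root tracking, whereas the paper inducts on the first coordinate $e(X^*)$ at which the projection's dimension drops and uses finite Skolem functions to enumerate the roots; and you take $X^{\circledast}=\overline{X^{\nabla}}$, which does satisfy the statement as literally written. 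Your cases (a) and (b) — including the implicit-function argument that the CODF witness lands on $D$ rather than on another branch of $\{P=0\}$ — and the reduction of case (c) to a set $E_0\subseteq K^{r+1}$ are correct in outline.

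The gap is exactly where you locate it, and it is not cosmetic: it is where the paper spends most of its proof. Three points must be supplied in case (c). First, $P_0(\bar{\delta}^{r-1}(z),x_r)$ can be the zero polynomial in $x_r$ for particular $z$ (all coefficients of $P_0$ in $x_r$ may vanish simultaneously at $\bar{\delta}^{r-1}(z)$); that locus is a proper subvariety of $K^{r}$ and needs its own descent, with a well-founded measure making the recursion terminate. Second, the locus $\frac{\partial P_0}{\partial x_r}=0$: the paper disposes of it by a secondary induction on the degree of $P_0$ in the last variable (on that locus the points satisfy a nonzero polynomial of strictly smaller degree); your ``absorbed recursively'' must be replaced by such a terminating induction. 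Third, the pushforward of density along $\Lambda$ requires $\Lambda$ to be continuous on $\overline{R_{r+1}\cap E_0}$, not merely on $R_{r+1}\cap E_0$; the denominators (powers of $\frac{\partial P_0}{\partial x_r}$) can vanish on the closure, and the paper handles this by repartitioning so that the functions $f_{\ell,i}$ are continuous on the whole relevant base, the discontinuity locus being lower-dimensional and fed back into the ambient induction. A final caveat on your choice $X^{\circledast}=\overline{X^{\nabla}}$: the proof of Proposition \ref{prop:strong-1-density} later uses the implication ``$\bar{\delta}^{n-1}(b)\in X^{\circledast}\Rightarrow b\in X$'', which holds for the paper's construction (whose $X^{\circledast}$ is contained in $\varphi^*(K)$) but not for the closure; replacing your $X^{\circledast}$ by $\overline{X^{\nabla}}\cap\varphi^*(K)$, in which $X^{\nabla}$ is still contained and dense, repairs this.
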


\begin{proof}
Let $X^*$ denote the set $\varphi^*(K)$. Given any finite partition $\{X^*_i\}_{i\in I}$ of $X^*$ such that for each $i\in I$ the set $X^*_i$ is defined by an $\cL_{or}$-formula $\chi_i$, it is enough to show the result for each $\cL_\delta$-definable set $X\cap \pi_1(X^*_i)$. Indeed, since each set $X\cap\pi_1(X^*_i)$ is defined by the formula $
\theta_i(x):=\varphi(x)\wedge\chi_i(\bar{\delta}^{n-1}(x))$, notice that $\theta_i^*=\varphi^*\wedge \chi_i$ is equivalent to $\chi_i$ since $\chi_i\rightarrow \varphi^*$. So assuming the result for each $\theta_i(K)$, there are sets $\theta_i(K)^\circledast$ such that $\theta(K)^\nabla$ is contained and dense in $\theta_i(K)^\circledast$. Taking $X^\circledast=\bigcup_{i\in I} \theta_i(K)^\circledast$ shows the result for $X$. We will use this observation throughout the proof. 

\

For any $\cL_{\delta}$-definable subset $Y\subseteq X$, define an integer $e(Y^*)\geqslant 1$ by 
\[
e(Y^*):=
\begin{cases}
\min\{m : \dim(\pi_m(Y^*))<m\} & \text{ if such an $m\leqslant n$ exists}\\ 
n+1 & \text{otherwise.}\\
\end{cases}
\]

Let $1\leq k \leq n$. Note that $e(Y^*)=k+1$ implies that $\dim(\pi_m(Y^*))=k$.

We will show the result by induction on $e(X^*)$, for all $\cL_\delta$-definable sets $X$ simultaneously. To show the base case of the induction, suppose that $e(X^*)=1$. 
This implies that $\pi_1(X^*)$ is finite and therefore that $X$ (and $X^{\nabla}$) are finite too. Setting $X^\circledast:=X^\nabla$ satisfies all the requirements (as a finite set, it is certainly $\cL_{or}$-definable). Suppose that we have shown the result for all integers smaller than $m\leqslant n$ and that we have $e(X^*)=m+1$. We split in cases depending on whether $m=n$ or $m<n$. 

\

\textbf{Case 1:} Suppose that $e(X^*)=n+1$. This implies that $\dim(X^*)=n$. Set $X_1^*=Int(X^*)$ and $X_2^*:=X^*\setminus X_1^*$. Consider for $i=1,2$ the set $X_i:=X\cap \pi_1(X_i^*)$. By the starting observation, it suffices to show the result for $X_1$ and $X_2$. Since $\dim(X_2^*)<n$, $e(X_2^*)\leqslant n$, and the result follows for $X_2$ by induction. We show that for $X_1$ the set $X_1^\circledast:=X_1^*$ satisfies the result. 
The fact that $X_1^\nabla$ is contained in $X_1^*$ follows by assumption, so it suffices to show that $X_1^\nabla$ is dense in $X_1^*$. Pick $a\in X_1^*$ and $\epsilon>0$ in $K$. Since $X_1^*$ is open, let $\epsilon_0\leqslant \epsilon$ be such that $\{x\in K^n: |x-a|<\epsilon_0\}$ is contained in $X_1^*$. Then by Lemma \ref{constants}, there is $b\in K$ such that $|\bar{\delta}^{n-1}(b)-a|<\epsilon_0$. This shows that $\delta^{n-1}(b)\in X_1^*$, which implies that $b\in X_1$ and completes the proof of this case.  

\

\textbf{Case 2:} Suppose that $e(X^*)=m+1\leqslant n$. By assumption we have that $\dim(\pi_m(X^*))=m$ so $Int(\pi_m(X^*))\neq\emptyset$. By induction we may assume that $\pi_m(X^*)$ is open. Indeed, partition $X^*$ into $X_1^*$ and $X_2^*$, with $X_1^*:\{y\in X^*: \pi_m(y)\in Int(\pi_m(X^*))\}$ and $X_2^*:= X^*\setminus X_1^*$. Set again $X_i$ to be $X\cap \pi_1(X_i^*)$. We have that $\pi_m(X_1^*)$ is open, and the result follows by induction for $X_2$ since, as $\pi_m(X_2^*)$ has dimension strictly less than $m$, hence $e(X_2^*)\leqslant m$. 

By Lemma \ref{lem:polynomial}, there is a non-trivial polynomial $P(z,w)$ with coefficients in $K$ and $\ell(z)=m$ and $\ell(w)=1$, such that $\pi_{m+1}(X^*)$ is contained in the zero locus of $P$, which we denote by $Z$. For every $x\in \pi_m(X^*)$, there are at most $s$ elements in $Z_x$. By possibly partitioning $\pi_m(X^*)$ (and $X$) we may assume $Z_x$ has exactly $s$ elements. By the existence of finite Skolem functions, let $h_1,\ldots,h_s$ be definable functions such that for all $x\in \pi_m(X^*)$, $Z_x=\{h_1(x),\ldots, h_s(x)\}$. Therefore, by decomposing $X^*$ and $X$, we may suppose that $\pi_{m+1}(X^*)=\bigcup_{i=1}^s\{(x,y)\in \pi_{m}(X^*)\times K: h_i(x)=y\}$. Moreover, we may suppose that $h_i$ is continuous. This follows by induction and further partitioning $X^*$ and $X$ since the set $\{x\in \pi_m(X^*): h_i \text{ is discontinuous as $x$}\}$ is of strictly lower dimension than $\pi_m(X^*)$. 

Writing $P$ as a polynomial in the variable $w$, we obtain polynomials $P_k(z)$ such that $P(z,w)=\sum_{k=0}^d P_k(z)w^k$. We will show the result by induction on $d$; we will call this inductive step, the ``\emph{case induction}'', as opposed to the ``\emph{ambient induction}''. Note that the case $d=0$ never arises, since it would imply that $\pi_{m}(X^*)$ is included in the zero locus of $P_0(z)$, which contradicts that $\pi_m(X^*)$ is open. 
Express $\delta(P)=\sum_{k=0}^d \delta(P_k).w^k+\sum_{k=1}^d P_k.k.w^{k-1}.\delta(w)$. So for $\ell\geq 1$ and for a tuple of elements $(z,w)\in Z$, there is a polynomial $g_{\ell}$ in $z,\bar \delta^{\ell-1}(w)$ such that $\delta^{\ell}(w).(\sum_{k=0}^{d-1} P_{k+1}(z).(k+1).w^{k})=g_{\ell}(z,\bar \delta^{\ell-1}(w))$. We suppose the result for all $X^*$ such that $\pi_{m+1}(X^*)$ is included in the locus of a non zero polynomial of degree $<d$ in the last variable. So by the case induction, we may assume that for all $(z,w)\in \pi_{m+1}(X^*)$, we have $\sum_{k=0}^{d-1} P_{k+1}(x).(k+1).{w}^{k}\neq 0$.
Define $f_{0,i}(x):=h_i(x)$ and by induction on $\ell\geqslant 1$, define the function $f_{\ell,i}:\pi_m(X^*)\to K$ by
 $$f_{\ell,i}(x)=\frac{g_{\ell}(x,\bar f_{\ell-1,i}(x))}{\sum_{k=0}^d P_k(x).k.{h_i(x)}^{k-1}}$$ 
where $\bar{f}_{\ell-1,i}(x)=(f_{0,i}(x),\ldots,f_{\ell-1,i}(x))$.
Each function $f_{\ell,i}$ is well-defined by the case induction hypothesis and continuous everywhere except on a set of dimension strictly less than $m$. Therefore, by possibly partitioning $X^*$ and $X$ and applying our ambient induction, we may assume each function $f_{\ell,i}$ is well-defined and continuous at every point in $\pi_{m}(X^*)$. 
Define $Y_{i}^\circledast$ as follows:
\[
Y_{i}^\circledast:=\{(x,y_0,\ldots,y_{n-m-1})\in X^*: \bigwedge_{\ell=0}^{n-m-1} f_{\ell,i}(x)=y_{\ell}\}. 
\]  
Either $\pi_{m}(Y_{i}^\circledast)$ is an open subset of $\pi_{m}(X^*)$ or we consider on one hand $Int(\pi_{m}(Y_{i}^\circledast))$ and on the other hand $\pi_{m}(Y_{i}^\circledast)\setminus Int(\pi_{m}(Y_{i}^\circledast))$. We apply the induction hypothesis to that last subset.  

Set $X_{i}^\circledast:=\{(x,y_0,\ldots,y_{n-m-1})\in K^n: x\in Int(\pi_{m}(Y_{i}^\circledast))\wedge\bigwedge_{\ell=0}^{n-m-1} f_{\ell,i}(x)=y_{\ell}\}$

Finally define $X^\circledast:=\bigcup_{i=1}^s X_{i}^\circledast$.
Let us first show that $X^\nabla\subseteq X^\circledast$. Pick $\bar{\delta}^{n-1}(b)\in X^\nabla$. Since $\bar{\delta}^{m}(b) \in \pi_{m+1}(X^*)$, this implies that for some $i$, $\delta^{m+1}(b)\in X_{m,i}$. 
Therefore, by our construction of $f_{\ell,i}$, $\delta^{m+\ell}(b)=f_{\ell,i}(\bar{\delta}^{m+\ell-1}(b))$, for all $\ell\geqslant 1$. This shows that $\bar{\delta}^{n-1}(b)\in X^\circledast$. To show the density of $X^\nabla$ in $X^\circledast$, let $a=(a_0,\ldots,a_{n-1})\in X^\circledast$ and $\epsilon>0$ in $K$. Notice that $\pi_{m}(X^\circledast)$ is open. Let $\epsilon_0\leqslant \epsilon$ be such that $\{x\in K^{m}: |x-(a_0,\ldots,a_{m-1})|<\epsilon_0\}$ is contained in $\pi_{m}(X^\circledast)$. Since the functions $f_{0,i},\ldots,f_{n-m-1,i}$ are continuous, let $\delta\leqslant \epsilon_0$ be such that for all $\ell\in\{0,\ldots,n-m-1\}$ and all $x\in \pi_m(X^*)$
\begin{equation}\label{eq:continuity}
|x-(a_0,\ldots,a_{m-1})|<\delta\Rightarrow |f_{\ell,i}(x)-f_{\ell,i}((a_0,\ldots,a_{m-1}))|<\epsilon_0.
\end{equation}
By the axiomatisation of CODF, there is a point $b\in K$ such that $P(\bar{\delta}^{m-1}(b),\delta^m(b))=0$ and $|\bar{\delta}^{m}(b)-(a_0,\ldots,a_m)|<\delta$. So there is $1\leq i\leq s$ such that $\delta^m(b)=h_{i}(\bar{\delta}^{m-1}(b))$ and so by continuity using (\ref{eq:continuity}),  $f_{\ell,i}(\bar{\delta}^{m-1}(b))$ (which is equal to $\delta^{m+\ell}(b)$) will be close to $a_{m+\ell+1}$ for $0\leq \ell\leq n-m-1$.
This shows in particular that $\bar{\delta}^{m-1}(b)\in \pi_{m}(X^*)$ and that $\bar{\delta}^n(b)\in X_{i}^\circledast$. Therefore $b\in X$ and 
$|\bar{\delta}^n(b)-a|<\epsilon$.
\end{proof}

\section{Definable types and imaginaries}\label{sec:unary}

In this section, we show that in the Stone space of the theory CODF, definable types are dense over codes.
We start by proving that for any complete theory, 
it suffices to show that density result for definable $1$-types (instead of definable $n$-types for arbitrary $n$).

\subsection{Reducing to the case $n=1$} Let $T$ be a complete $\cL$-theory, $\U$ be a monster model of $T$ and $d$ be a fibered dimension function on $\U$. Let $K$ be a model of $T$.  

\begin{definition}\label{def:strong-dim-density} We say that definable $n$-types are \emph{dense over codes in $T$} (resp. dense over $a$-codes) if for every non-empty definable subset $X\subseteq K^n$ there is $a\in \U^n$ such that 
\begin{enumerate}[(1)]
\item $a\in X(\U)$,
\item $tp(a/K)$ is $c(X)$-definable (resp. $acl^{eq}(c(X))$-definable).
\end{enumerate}
If moreover we impose that $d(a/K)=d(X)$, we say that definable $n$-types are \emph{$d$-dense over codes in $T$} (resp. $d$-dense over $a$-codes in $T$).
\end{definition}

Notice that with this terminology, Theorem \ref{thm:densityoftypes} states that all definable $n$-types in CODF are dense over codes, and Theorem \ref{thm:strongdensityoftypes} that all definable $n$-types in CODF are $\dd$-dense over codes, where $\dd$ is the $\delta$-dimension (see later Section \ref{delta-dim}). The following Proposition corresponds to the reduction to $n=1$ and is very much inspired by \cite{johnsonthesis}.

\begin{proposition}\label{lem:1-to-n} 
	Let $T$ be a complete theory. 
	If definable $1$-types are dense over codes (resp. over $a$-codes) 
	then definable $n$-types are dense over codes (resp. $a$-codes). 
	Moreover, assuming $T$ admits a code-definable fibered dimension function $d$, 
	if $1$-types are $d$-dense over codes (resp. $a$-codes) 
	then so are all definable $n$-types. 
\end{proposition}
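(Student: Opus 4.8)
The proof goes by induction on $n$, reducing an $n$-dimensional problem to an $(n-1)$-dimensional one together with a $1$-dimensional one, in the spirit of \cite[Theorem 6.3.1]{johnsonthesis}. Fix a non-empty definable set $X\subseteq K^{n}$, with code $c:=c(X)$. First I would pass to the projection $Y:=\pi_{n-1}(X)\subseteq K^{n-1}$, which is definable over $c$ (hence has a code in $\mathrm{dcl}^{eq}(c)$). By the induction hypothesis applied to $Y$, there is $a\in Y(\U)$ with $tp(a/K)$ definable over $c(Y)$, and therefore over $c$ — using that $c(Y)\in \mathrm{dcl}^{eq}(c)$ together with Lemma \ref{lem:dcltype}. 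In the $a$-code version one works with $acl^{eq}(c)$ instead, which is harmless. Now consider the fiber $X_{a}\subseteq \U$; it is non-empty (since $a\in \pi_{n-1}(X)$) and it is definable over $Kca$. The point is that a \emph{code} for $X_a$ over $Kca$ is interdefinable with $(c,a)$ over $K$ — more precisely, $c(X_a)\in \mathrm{dcl}^{eq}(Kca)$ and, by genericity of $a$, the parameters needed to define $X_a$ that lie outside $\mathrm{dcl}^{eq}(ca)$ can be absorbed; the cleanest formulation is that we apply the $1$-dimensional hypothesis to $X_a$ viewed as a definable subset of $\U$ over the model... here one has to be a little careful, since the hypothesis speaks of definable sets over a model $K$, not over $Kca$. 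I would handle this by replacing $K$ with a model $M\succeq Kca$ chosen so that $tp(a/M)$ is still $c$-definable (such $M$ exists by definition of $a\fork[c]K$, possibly after noting $a\fork[c]K$ holds since $tp(a/K)$ is $c$-definable), and then applying the $1$-type hypothesis to the set $X_a$ over $M$ — wait, the hypothesis is stated over an arbitrary model of $T$, so we may simply take $M$ to be our ambient model and apply it there. Concretely: $X_a\subseteq M$ is non-empty and definable, with code $c(X_a)$; by the $1$-type hypothesis there is $b\in X_a(\U)$ with $tp(b/M)$ definable over $c(X_a)$, hence over $\mathrm{dcl}^{eq}(ca)$.

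\textbf{Assembling the $n$-type.} With $a$ and $b$ in hand, set $p:=tp(a,b/K)$; then $(a,b)\in X(\U)$ by construction. To see that $p$ is $c$-definable, I would invoke the transitivity lemmas: $tp(a/M)$ is $c$-definable (so $a\fork[c]M$, in particular $a\fork[c]K$), and $tp(b/Ma)$ is $ca$-definable (so $b\fork[ca]Ma$, in particular $b\fork[ca]K$); by Lemma \ref{lem:transitivity} (with $\nfork=\fork$) we get $ab\fork[c]K$, i.e. there is a model over which $tp(ab/\cdot)$ is $c$-definable, and restricting down gives that $tp(ab/K)$ is $c$-definable. (For the $a$-code statement, use $\afork$ throughout and conclude $tp(ab/K)$ is $acl^{eq}(c)$-definable.) This handles the first assertion of the Proposition.

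\textbf{The $d$-dense refinement.} Assume now $d$ is a code-definable fibered dimension function and that $1$-types are $d$-dense over codes. Everything above is carried out with the extra bookkeeping on dimensions. The key input is (Dim 4) together with (Dim 5): since $d$ is code-definable, the sets $Y(0)$ and $Y(1)$ — more precisely $X(0)$ and $X(1)$ for the projection $\U^{n}\to\U^{n-1}$ forgetting the last coordinate — are $c(X)$-definable. By (Dim 2) and Lemma \ref{lem:dim}(2), $d(X)=d(X(i))+i$ for $i=0$ or $i=1$; replacing $X$ by $\{(x,y)\in X: x\in X(i)\}$ (still $c$-definable, and with a code in $\mathrm{dcl}^{eq}(c)$ by (Dim 5)) I may assume that every non-empty fiber $X_x$ has $d$-dimension exactly $i$ and that $d(\pi_{n-1}(X))=d(X)-i$. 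Now apply the $d$-dense induction hypothesis to $Y=\pi_{n-1}(X)$ to get $a$ with $d(a/K)=d(Y)=d(X)-i$, and the $d$-dense $1$-type hypothesis to $X_a$ (which has $d$-dimension $i$) to get $b\in X_a$ with $d(b/Ka)=i$. By Lemma \ref{lem:additivity}, $d(ab/K)=d(a/K)+d(b/Ka)=(d(X)-i)+i=d(X)$. Since also $d(ab/K)=d(p)\le d(X)$ automatically (as $(a,b)$ realizes a $c$-definable type inside the $c$-definable set $X$, and $p$ contains the formula defining $X$), we in fact need the reverse inequality $d(p)\ge$ something — but here the computation $d(ab/K)=d(X)$ together with $d(p)=d(ab/K)$ and $d(p)\le d(X)$ forces $d(p)=d(X)$, as desired.

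\textbf{Main obstacle.} The delicate point is the parameter bookkeeping in the fiber step: one must be sure that a code for $X_a$ (as a subset of $\U$ over the model $M\succeq Kca$) lies in $\mathrm{dcl}^{eq}(ca)$, rather than genuinely involving elements of $M\setminus \mathrm{dcl}^{eq}(Kca)$, so that the $1$-type produced over $M$ is definable over $\mathrm{dcl}^{eq}(ca)$ and the transitivity lemma applies with base $c$. This is exactly where choosing $X_a$ as the fiber of the $\emph{globally $c$-definable}$ set $X$ over the realized point $a$ matters: the formula $\varphi(x,y)$ defining $X$ has parameters in $\mathrm{dcl}^{eq}(c)$, so $\varphi(a,y)$ defines $X_a$ over $\mathrm{dcl}^{eq}(ca)$, and the code of $X_a$ is in $\mathrm{dcl}^{eq}(ca)$ accordingly. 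Once this is set up correctly, the rest is a routine assembly via Lemmas \ref{lem:dcltype}, \ref{lem:transitivity} and \ref{lem:additivity}.
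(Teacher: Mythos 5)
Your proposal is correct and follows essentially the same route as the paper: induction on $n$, decomposing $X$ into its projection and its fibers, using (Dim 5) to keep $X(i)$ definable over $c(X)$, producing $b$ in the fiber over an elementary extension $K'\supseteq Kc(X)a$ so that $c(X_a)\in dcl^{eq}(c(X)a)$, and assembling via Lemma \ref{lem:transitivity} and Lemma \ref{lem:additivity}. The only cosmetic difference is some hesitation over the choice of the auxiliary model $M$, which you resolve exactly as the paper does.
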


A proof for the first part of the proposition can be found in \cite[Claim 6.3.2]{johnsonthesis}. We include the proof for the reader's convenience.  

\begin{proof}[Proof of Proposition \ref{lem:1-to-n}:] 
	Let us first show the argument for types without the dimension assumption. 
	We proceed by induction on $n\geq 1$, the base case being given by assumption. 
	Let $X\subseteq K^{n+1}$ be a definable set. 
	By induction, let $a\in\pi_n(X)(\U)$ be such that $tp(a/K)$ is $c(X)$-definable 
	i.e. $a\fork[c(X)]K$. Let $K'$ be an elementary extension of $K$ containing $a$ and $c(X)$. 
	By the case $n=1$, let $b\in X_a(\U)$ be such that $tp(b/K')$ is $c(X_a)$-definable, 
	so in particular $(c(X)\cup\{a\})$-definable. 
	Then we have both $a\fork[c(X)]K$ and $b\fork[c(X)a]K$, 
	so by Lemma \ref{lem:transitivity}, we have $(a,b)\fork[c(X)]K$, 
	which shows that $tp(a,b/K)$ is $c(X)$-definable. 
	By assumption $(a,b)\in X(\U)$ which completes the proof of the first statement.

	Note that exactly the same proof of the first statement of 
	Proposition \ref{lem:1-to-n} works when replacing density over codes by density over $a$-codes. 
	One simply replaces $\fork$ by $\afork$ and notices that if $b\in X_a(\U)$ and $tp(b/K')$ is $acl^{eq}(c(X_a))$-definable, 
	then it is also $acl^{eq}(c(X)\cup\{a\})$-definable given that $acl^{eq}(c(X_a))\subseteq acl^{eq}(c(X)\cup\{a\})$. 
	This shows that $b\afork[c(X)a]K$ and the proof is completed using Lemma \ref{lem:transitivity} exactly as before.
\\

Let us now show the second statement. We only show the result for $d$-density over codes since, as in the previous case, the proof for $d$-density over $a$-codes is completely analogous. We proceed by induction on $n\geq 1$, the base case being given by assumption. 

Fix $i\in\{0,1\}$ such that $d(X)=d(X(i))+i$. Notice that this implies that $X(i)\neq\emptyset$. By induction, let $a\in X(i)(\U)$ be such that $tp(a/K)$ is $c(X(i))$-definable and $d(a/K)=d(X(i))$. Since $X(i)$ is $c(X)$-definable by condition (Dim 5), $tp(a/K)$ is also $c(X)$-definable. Let $K'$ be an elementary extension of $K$ containing $a$. By the case $n=1$, let $b\in X_a(\U)$ be such that $tp(b/K')$ is $c(X_a)$-definable and $d(b/K')=d(X_a)=i$. By Lemma \ref{lem:transitivity}, $tp(a,b/K)$ is $c(X)$-definable. It suffices to show that $d(a,b/K)=d(X(i))+i$. We split in two cases: 

\textit{Case 1:} If $i=1$, then by part $(4)$ of Lemma \ref{lem:dim}, $d(b/Ka)=1$. This implies, by Lemma \ref{lem:additivity}, that $d(a,b/K)=d(a/K)+1=d(X(1))+1=d(X)$. 

\textit{Case 2:} If $i=0$, since $d(X_a)=0$ and $X_a$ is $(K\cup\{a\})$-definable, $d(b/Ka)=0$. Again by Lemma \ref{lem:additivity}, we have that $d(a,b/K)=d(a/K)+0=d(X(0))+0=d(X)$. 
\end{proof}

\begin{proposition}\label{prop:strong-density-ominimal} Let $T$ be an o-minimal theory and let $\dim$ be its dimension function. Then all definable types are $\dim$-dense over codes.
 \end{proposition}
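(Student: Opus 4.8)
The plan is to reduce to $1$-types via Proposition~\ref{lem:1-to-n} and then exhibit an explicit $c(X)$-definable type in $X$ of the correct dimension, using the description of definable $1$-types in an o-minimal theory.

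First I would recall from Subsection~\ref{o-minimal} that in any o-minimal $T$ the function $\dim$ is a fibered dimension function satisfying (Dim 5), hence a code-definable fibered dimension function. By the second part of Proposition~\ref{lem:1-to-n}, it then suffices to prove that definable $1$-types are $\dim$-dense over codes: given a non-empty definable $X\subseteq K$ with code $C:=c(X)$, I must produce $a\in X(\U)$ with $tp(a/K)$ being $C$-definable and $\dim(a/K)=\dim(X)$.

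Next I would split on $\dim(X)\in\{0,1\}$. If $\dim(X)=0$, then $X$ is a non-empty finite set; using $<$ one enumerates $X$ definably, so its least element $a$ lies in $dcl(C)$, and since $a\in K$ the realized type $tp(a/K)$ is $a$-definable, a fortiori $C$-definable, with $\dim(a/K)=0=\dim(X)$. If $\dim(X)=1$, then $Int(X)$ is non-empty and (as recorded in Subsection~\ref{o-minimal}) $C$-definable, and by o-minimality it is a finite union of open intervals. If $Int(X)$ is unbounded below, its leftmost interval has the form $(-\infty,\beta)$ with $\beta\in K\cup\{+\infty\}$, so any realization $a$ of the $\emptyset$-definable type $-\infty$ over $K$ lies in $X(\U)$; such a $tp(a/K)$ is $\emptyset$-definable, hence $C$-definable, and $\dim(a/K)=1$. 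If $Int(X)$ is bounded below, I set $\alpha:=\inf Int(X)\in K$; then $\alpha\in dcl(C)$, being defined from the $C$-definable set $Int(X)$, and $\alpha$ is the left endpoint of one of the constituent open intervals $(\alpha,\beta)$ of $Int(X)$, so any realization $a$ of the $\alpha$-definable type $\alpha^+$ over $K$ satisfies $\alpha<a<\beta$, whence $a\in Int(X)\subseteq X$; moreover $tp(a/K)=\alpha^+$ is $C$-definable and $\dim(a/K)=1$.

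Two elementary facts carry the dimension bookkeeping, and I would either cite them or record them as short remarks. First, for $K$ a model of an o-minimal theory, $acl(K)=dcl(K)=K$, so every finite $K$-definable subset of $K$ lies in $K$; consequently a non-realized type in $S_1^T(K)$ contains no formula of dimension $0$, i.e.\ has dimension $1$ --- this is what yields $\dim(a/K)=1$ in both subcases above (note that there $a\notin K$). Second, the partial types $\alpha^+$ (for $\alpha\in K$) and $-\infty$ over $K$ are complete and definable over $\alpha$, respectively over $\emptyset$; this is exactly the content of the definitions recalled at the end of Subsection~\ref{o-minimal}, which rest on the standard fact that in an o-minimal structure $\varphi(x,y)$ defines, fibrewise in $y$, a finite union of points and intervals whose endpoints are $\emptyset$-definable functions of $y$, so that membership of the cut determined by $\alpha^+$ (resp.\ $-\infty$) is controlled definably by $\alpha$ (resp.\ by nothing). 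I expect the only slightly delicate point will be, in the case $\dim(X)=1$, making sure the chosen realization genuinely lands in $X$ and not merely in $\overline X$; this is handled by noting that $\alpha=\inf Int(X)$ (resp.\ $-\infty$) is the left endpoint of an actual open subinterval of the open set $Int(X)\subseteq X$, so the ``infinitesimally to the right'' realization stays inside $Int(X)$.
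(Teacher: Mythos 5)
Your proof is correct and follows essentially the same route as the paper's: reduce to $1$-types via Proposition~\ref{lem:1-to-n} (using that $\dim$ satisfies (Dim~5)), then in the one-dimensional case realize either the minimal element of a finite $X$, or the $c(X)$-definable cut $b_0^{+}$ (resp.\ the type $-\infty$) at the left endpoint of an interval contained in $X$. Your passage through $Int(X)$ and the explicit bookkeeping of why the endpoint lies in $dcl(c(X))$ and why $\dim(a/K)=1$ merely spell out details the paper leaves to the reader.
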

\begin{proof}
In view of Proposition \ref{lem:1-to-n}, it suffices to show that definable $1$-types $\dim$-dense over codes. Let $K$ be a model of $T$ and let $X\subseteq K$ be a definable set. By o-minimality, $X$ is either finite or there is an interval $(b_0,b_1)$ with $b_0\in K\cup\{\pm\infty\}$ which is maximally contained in $X$ with respect to inclusion. If $X$ is finite, let $a$ be its minimal element. Then $tp(a/K)$ is $c(X)$-definable and $\dim(a/K)=0=\dim(X)$. Otherwise, if $b_0=-\infty$, let $a\in \U$ be any element realizing the type $-\infty$ over $K$. Then we have that $a\in X(\U)$, $tp(a/K)$ is $\emptyset$-definable. Finally, if $b_0\in K$, let $a\in \U$ realize the type $b_0^+$, which is a $c(X)$-definable type  and again $a\in X(\U)$. In both cases, one easily cheks that $\dim(a/K)=1=\dim(X)$. 
\end{proof}

\subsection{The case $n=1$} In this section $\U$ denotes a monster model of CODF. We will show that definable 1-types are dense over codes. Recall that for $B\subseteq \U$ and $c\in \U$, $tp(a/B)$ stands for the type of $a$ over $B$ in the language $\cL_{or}$. 

\begin{lemma}\label{lem:otypes} 
	Let $a=(a_1,\ldots, a_n)\in \U^n$ 
	and $\dim(a/K)=n$. Let $\epsilon_0$ realise the type $0^+$ over $K(a)^{rc}$. 
	Then for every $b=(b_1,\ldots,b_n)\in \U^n$ such that
	$|a-b|<\epsilon_0$, we have that $tp(a/K)=tp(b/K)$. 
\end{lemma}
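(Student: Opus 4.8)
The statement asserts that a point $a \in \U^n$ of full $\cL_{or}$-dimension $n$ over $K$ is ``rigid'' in the following sense: any $b$ close enough to $a$ — closer than a realization $\epsilon_0$ of $0^+$ over the real closure $K(a)^{rc}$ — has the same $\cL_{or}$-type over $K$ as $a$. The plan is to reduce to quantifier-free formulas by o-minimality of RCF (equivalently, quantifier elimination for RCF), and then to handle a single atomic formula, which over $\cL_{or}$ amounts to the sign of a polynomial $P(x) \in K[x_1,\dots,x_n]$. So it suffices to show: for every such $P$, the sign of $P(a)$ equals the sign of $P(b)$ whenever $|a-b| < \epsilon_0$.

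The key dichotomy is whether $P(a) = 0$ or $P(a) \neq 0$. If $P(a) \neq 0$, then $P(a)$ is a nonzero element of $K(a)^{rc}$, hence $|P(a)|$ is some positive element of that field; by continuity of the polynomial map $P$ near $a$ (a standard $\cL_{or}$-definable-over-$K(a)^{rc}$ continuity estimate: there is $\eta > 0$ in $K(a)^{rc}$ with $|x - a| < \eta \Rightarrow |P(x) - P(a)| < |P(a)|$), and since $\epsilon_0$ realizes $0^+$ over $K(a)^{rc}$ we have $\epsilon_0 < \eta$, so $|P(b) - P(a)| < |P(a)|$ and $P(b)$ has the same sign as $P(a)$. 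If on the other hand $P(a) = 0$, I claim this forces $P$ to be the zero polynomial: indeed, $a$ would then lie in the zero locus $Z(P) \subseteq K^n$, and since $\dim(a/K) = n$, any $K$-definable set containing $a$ has dimension $n$, so $\dim(Z(P)) = n$; but by Lemma \ref{lem:polynomial} a proper algebraic subset of $K^n$ has dimension $< n$, so $Z(P) = K^n$ and $P \equiv 0$ (here using that $K$, being a model of RCF — in particular CODF — is infinite and a domain, so a polynomial vanishing identically as a function is the zero polynomial). In that degenerate case $P(b) = 0 = P(a)$ trivially, and the signs agree.

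Assembling these: given any $\cL_{or}$-formula $\chi(x)$ without parameters and any tuple $c$ from $K$, by quantifier elimination $\chi(x,c)$ is equivalent to a Boolean combination of atomic formulas $P_j(x) = 0$ and $P_j(x) > 0$ with $P_j \in K[x]$; by the above each such atomic formula has the same truth value at $a$ and at $b$, hence so does $\chi(\cdot, c)$. Since $c$ and $\chi$ were arbitrary, $tp(a/K) = tp(b/K)$. The main obstacle — and it is a mild one — is making the continuity estimate precise with the parameter bookkeeping: one must check that the modulus $\eta$ witnessing continuity of $P$ at $a$ can be taken in $K(a)^{rc}$ (which is clear, as it is definable over the coefficients of $P$, which lie in $K$, together with $a$), so that $\epsilon_0 < \eta$ really does follow from $\epsilon_0$ realizing $0^+$ over $K(a)^{rc}$. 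Everything else is routine.
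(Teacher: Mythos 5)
Your proof is correct, but it decomposes the problem differently from the paper. The paper handles an arbitrary $\cL_{or}$-formula $\varphi(x,c)$ with $c\in K^{|y|}$ in one stroke: since $\dim(a/K)=n$ and $\dim(X\setminus Int(X))<n$ for any $K$-definable $X\subseteq \U^n$, the point $a$ must lie in $Int(\varphi(\U,c))$; elementarity of $K(a)^{rc}$ in $\U$ then yields a radius $\epsilon\in K(a)^{rc}$ with the ball $|x-a|<\epsilon$ contained in $\varphi(\U,c)$, and $\epsilon_0<\epsilon$ finishes the argument. You instead reduce via quantifier elimination to sign conditions on polynomials $P\in K[x]$ and split on whether $P(a)=0$: the nonzero case is your explicit continuity estimate (the same elementarity-of-$K(a)^{rc}$ mechanism, applied to the modulus $\eta$), and the zero case is excluded for $P\not\equiv 0$ because the zero locus is a $K$-definable set of dimension $<n$ containing $a$. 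Both arguments rest on the same two pillars (the dimension hypothesis rules out ``thin'' $K$-definable conditions on $a$, and $\epsilon_0$ lies below every positive element of $K(a)^{rc}$), so the difference is one of granularity: your version is more explicit and self-contained for RCF, while the paper's works verbatim in any o-minimal theory without invoking quantifier elimination. One small correction: the fact you need in the $P(a)=0$ case --- that the zero set of a nonzero polynomial has dimension $<n$ --- is the converse of Lemma \ref{lem:polynomial}, not that lemma itself; it is standard (such zero sets have empty interior, by induction on the number of variables), but it should be cited as such.
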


\begin{proof} 
Let $x=(x_1,\ldots,x_n)$, $\varphi(x,y)$ an $\cL_{or}$-formula and $c\in K^{|y|}$. Suppose that $\varphi(a,c)$ holds. 
By the dimension assumption, $a$ belongs to $Int(\varphi(\U,c))$ (which is not empty). 
Therefore, we have that 
\[
\U\models \exists\epsilon>0\ \  \forall x\ \  (|a-x|<\epsilon\to \varphi(x,c)). 
\]
Since $K(a)^{rc}$ is an $\cL_{or}$-elementary substructure of $\U$, 
let $\epsilon\in K(a)^{rc}$ satisfy the above condition. 
By assumption $|a-b|<\epsilon_0<\epsilon$, hence $\varphi(b,c)$. 
\end{proof}

\begin{lemma}\label{lem:extep2} 
	Let $a\in \U$ be such that $\dim(\bar{\delta}^{n-1}(a)/K)=n$. 
	Then there is $b\in \U$ such that $tp(\bar{\delta}^{n-1}(b)/K)=tp(\bar{\delta}^{n-1}(a)/K)$ 
	and for all $k\geqslant n$, $\delta^k(b)=0$.
\end{lemma}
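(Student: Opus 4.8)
The plan is to realize the target type by a suitable element whose higher derivatives vanish. Write $\bar\alpha := \bar\delta^{n-1}(a) = (a, \delta(a), \dots, \delta^{n-1}(a)) \in \U^n$, so by hypothesis $\dim(\bar\alpha/K) = n$, i.e. $\bar\alpha$ is generic in $\U^n$ over $K$. First I would invoke Lemma \ref{lem:otypes}: let $\epsilon_0 \in \U$ realize the type $0^+$ over $K(\bar\alpha)^{rc}$; then any tuple $\bar\beta \in \U^n$ with $|\bar\alpha - \bar\beta| < \epsilon_0$ satisfies $tp(\bar\beta/K) = tp(\bar\alpha/K)$ in $\cL_{or}$. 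So it suffices to produce $b \in \U$ with $|\bar\delta^{n-1}(b) - \bar\alpha| < \epsilon_0$ and $\delta^k(b) = 0$ for all $k \geq n$.

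The key step is to apply Lemma \ref{constants} (density of primitives) with the choice $b := 0$ of the prescribed value for the $n$-th derivative: there exists $b \in \U$ with $\bar\delta^{n-1}(b)$ lying in the open ball $O := \{x \in \U^n : |x - \bar\alpha| < \epsilon_0\}$ and $\delta^n(b) = 0$. Since $\delta^n(b) = 0$, applying $\delta$ repeatedly gives $\delta^k(b) = 0$ for all $k \geq n$. By the choice of $O$ and Lemma \ref{lem:otypes}, $tp(\bar\delta^{n-1}(b)/K) = tp(\bar\delta^{n-1}(a)/K)$, which is exactly what is required.

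I do not anticipate a serious obstacle here: the statement is essentially a packaging of density of primitives together with the genericity hypothesis and Lemma \ref{lem:otypes}. The one point that needs a moment's care is checking that $\bar\delta^{n-1}(b)$ genuinely lands inside an open ball around $\bar\alpha$ on which the $\cL_{or}$-type is constant — this is where $\dim(\bar\alpha/K) = n$ is used, via Lemma \ref{lem:otypes}, since that lemma requires the tuple around which we perturb to be generic. If instead one only knew $\dim(\bar\alpha/K) < n$, the $\cL_{or}$-type would not in general be locally constant and the argument would break, so the genericity hypothesis is genuinely needed and should be explicitly flagged in the write-up.
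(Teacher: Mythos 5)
Your proof is correct and follows essentially the same route as the paper: both fix $\epsilon_0$ realizing $0^+$ over $K(\bar{\delta}^{n-1}(a))^{rc}$, use Lemma \ref{constants} to produce $b$ with $\delta^n(b)=0$ and $\bar{\delta}^{n-1}(b)$ in the $\epsilon_0$-ball around $\bar{\delta}^{n-1}(a)$, and conclude via Lemma \ref{lem:otypes} using the genericity hypothesis exactly as you flag. The only cosmetic difference is that you apply Lemma \ref{constants} once in dimension $n$, whereas the paper iterates its one-dimensional case $n$ times to build the successive antiderivatives; your single application is, if anything, more direct.
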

\begin{proof}
	Let $\epsilon_0\in \U$ realise $0^+$ over $K(\bar{\delta}^{n-1}(a))^{rc}$.
	By Lemma \ref{constants}, there is $b_{n}\in \U$ such that
	$$\delta(b_{n})=0 \text{ and }|b_{n}-\delta^{n-1}(a)|<\epsilon_0.$$
	
 Iterating this argument, applying again Lemma \ref{constants}, we get $b_i\in \U$, $1\leq i<n$, such that
	$$\delta(b_{i})=b_{i+1} \text{\ and \ }|b_{i}-\delta^{i}(a)|<\epsilon_0.$$

	Therefore by Lemma \ref{lem:otypes}, $tp(b_1,\dots, b_{n}/K)=tp(\bar{\delta}^{n-1}(a)/K)$.
	Since $(b_1,\dots, b_{n})=\bar \delta^{n-1}(b_1)$, the proof is completed.
\end{proof}

\begin{proposition}\label{prop:strong-1-density} In CODF, definable 1-types are dense over codes. 
\end{proposition}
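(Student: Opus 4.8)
The plan is to push the statement into the o-minimal reduct. By quantifier elimination and a preliminary reduction I would assume that $X$ is $K$-definable and defined by a quantifier-free $\cL_\delta$-formula $\varphi$; let $n$ and $X^*=\varphi^*(K)\subseteq K^n$ be as in Lemma~\ref{lem:density}, and let $X^\circledast$ be the $\cL_{or}$-definable set it provides, so that $X^\nabla:=\{\bar\delta^{n-1}(z):z\in X\}\subseteq X^\circledast\subseteq X^*$ is dense in $X^\circledast$. Put $m:=\dim X^\circledast$ and $\widehat X:=\overline{X^\nabla}$. By Lemma~\ref{lem:density} one has $\widehat X=\overline{X^\circledast}$, so $\widehat X$ is $\cL_{or}$-definable, of dimension $m$, and $\widehat X\setminus X^\circledast$ (the frontier of $X^\circledast$) has dimension $<m$; and since any $\cL_\delta$-automorphism fixing $X$ setwise commutes with $\delta$ and hence fixes $X^\nabla$, the set $\widehat X$ is $\cL_\delta$-definable over $c_\delta(X)$, so that $c(\widehat X)\in dcl^{eq}_{\cL_\delta}(c_\delta(X))$. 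After a harmless decomposition (keeping the piece on which the type constructed below concentrates), I may assume $X^\circledast$ is the graph of a continuous $\cL_{or}$-definable map $\vec f\colon U\to K^{n-m}$ over a non-empty open $U\subseteq K^m$, so that $\pi_m$ restricts to a homeomorphism $X^\circledast=\widehat X\cap(U\times K^{n-m})\to U$.

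Applying Proposition~\ref{prop:strong-density-ominimal} to $\U$ as a model of RCF, I get $\alpha\in\widehat X(\U)$ with $q:=tp(\alpha/K)$ being $c(\widehat X)$-definable and $\dim q=m$. As $\widehat X\setminus X^\circledast$ is $K$-definable of dimension $<m$, the type $q$ concentrates on $X^\circledast$, hence on $X^*$; in particular $\dim(\psi(\U)\cap X^\circledast)=m$ for every $\psi\in q$. The crucial step is to realize $q$ along the diagonal: I claim the partial type $\{\psi(\bar\delta^{n-1}(x)):\psi\in q\}$ is consistent, so by saturation there is $a\in\U$ with $\bar\delta^{n-1}(a)\models q$, and then $a\in X$ (since $\varphi^*\in q$). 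For finite satisfiability, given $\psi\in q$ the set $\pi_m(\psi(\U)\cap X^\circledast)\subseteq U$ has dimension $m$, hence non-empty interior; since $\pi_m(X^\nabla)=\{\bar\delta^{m-1}(z):z\in X\}$ is dense in $U$, some $z\in X$ has $\bar\delta^{m-1}(z)$ in that interior, and then $\bar\delta^{n-1}(z)=(\bar\delta^{m-1}(z),\vec f(\bar\delta^{m-1}(z)))\in\psi(\U)$ because $X^\circledast$ is the graph of $\vec f$ and $\bar\delta^{n-1}(z)\in X^\circledast$.

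Now fix such an $a$; by the graph structure $\dim(\bar\delta^{m-1}(a)/K)=\dim(\bar\delta^{n-1}(a)/K)=m$. If $m=n$, Lemma~\ref{lem:extep2} lets me replace $a$ by some $b$ with $tp(\bar\delta^{n-1}(b)/K)=q$ (so $b\in X$ still) and $\delta^k(b)=0$ for all $k\geq n$; I rename $b$ as $a$. If $m<n$, then $\dim\pi_{m+1}(X^\circledast)\leq m$, so by Lemma~\ref{lem:polynomial} there is a non-trivial $P\in K[z_0,\dots,z_m]$ vanishing on $\pi_{m+1}(X^\circledast)$, necessarily of positive degree in $z_m$ (otherwise its zero set would contain the open set $U$); as $\bar\delta^{m-1}(a)$ has dimension $m$ over $K$ it avoids the lower-dimensional zero set of the leading $z_m$-coefficient of $P$, so $\delta^m(a)$ is algebraic over $K(\bar\delta^{m-1}(a))$, whence by Lemma~\ref{lem:algind} each $\delta^k(a)$ with $k\geq m$ lies in $K(\bar\delta^m(a))\subseteq acl(K\bar\delta^{m-1}(a))=dcl(K\bar\delta^{m-1}(a))$, using $acl=dcl$ in the o-minimal reduct. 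Either way $\bar\delta^N(a)\in dcl(K\cup\bar\delta^{n-1}(a))$ for every $N$; since $tp(\bar\delta^{n-1}(a)/K)=q$ is $c(\widehat X)$-definable, Lemma~\ref{lem:dcltype} gives that each $tp(\bar\delta^N(a)/K)$ is $c(\widehat X)$-definable, so by Lemma~\ref{forgetful} the type $tp_\delta(a/K)$ is $c(\widehat X)$-definable, hence $c_\delta(X)$-definable; and $a\in X$, which completes the proof.

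The step I expect to be hardest is the combination of the diagonal realization with the control of the derivatives of order $\geq n$: one has to match the o-minimal generic type of $\widehat X$ with the rigidity imposed by iterated differentiation. The former works precisely because Lemma~\ref{lem:density} gives density of $X^\nabla$ inside $X^\circledast$, not merely inside its projection; the latter rests on the dichotomy between the ``differentially thin'' case $m<n$, where Marker's Lemma~\ref{lem:algind} pushes all higher derivatives into $dcl$, and the ``differentially generic'' case $m=n$, where Lemma~\ref{lem:extep2} is needed to annihilate them. The bookkeeping required to obtain a definition over $c_\delta(X)$, rather than over a possibly larger parameter set coming from a particular presentation of $X$, is handled above by passing from $X^\circledast$ to its closure $\widehat X=\overline{X^\nabla}$.
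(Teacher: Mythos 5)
Your proof has the same skeleton as the paper's: Lemma \ref{lem:density}, the o-minimal density result (Proposition \ref{prop:strong-density-ominimal}) applied to the $c_\delta(X)$-definable set $\overline{X^\nabla}=\overline{X^{\circledast}}$, a dichotomy according to whether $\dim\overline{X^{\circledast}}$ equals $n$ or is smaller, Lemma \ref{lem:extep2} in the first case and algebraicity of the higher derivatives in the second, and finally Lemmas \ref{lem:dcltype} and \ref{forgetful}. Where you genuinely diverge is in how the o-minimal generic point is turned into an element of $X$. The paper realizes only the generic type of $Y=Int(\pi_\ell(\overline{X^{\circledast}}))$ in the first $\ell$ coordinates: it picks $u\in\overline{X^{\circledast}}$ with $\hat u$ generic in $Y$, approximates $u$ by some $\bar\delta^{n-1}(a)$ with $a\in X$ to within an $\epsilon_0$ realizing $0^+$ over $K(u)^{rc}$, and invokes Lemma \ref{lem:otypes} to transfer $tp(\hat u/K)$ to $\bar\delta^{\ell-1}(a)$. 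You instead realize the full $n$-type $q$ of the generic point of $\overline{X^{\circledast}}$ by a diagonal tuple $\bar\delta^{n-1}(a)$ via compactness, using density of $X^\nabla$ for finite satisfiability. Your route avoids the infinitesimal bookkeeping and Lemma \ref{lem:otypes} altogether, and it gives slightly more: the whole of $tp(\bar\delta^{n-1}(a)/K)$, not just its restriction to the first $\ell$ coordinates, is controlled.

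The one step you should shore up is the ``harmless decomposition'' of $X^{\circledast}$ into the graph of a continuous map over a non-empty open $U\subseteq K^m$ via the projection onto the \emph{first} $m$ coordinates. The statement of Lemma \ref{lem:density} does not give this: an arbitrary $m$-dimensional set containing $X^\nabla$ densely could a priori have $\dim\pi_m(X^{\circledast})<m$, and an $m$-dimensional cell need only be a graph over \emph{some} $m$ of the coordinates, not the initial ones. What saves you is the explicit construction inside the proof of Lemma \ref{lem:density}, which produces $X^{\circledast}$ as a finite union of graphs of continuous functions over open subsets of initial coordinate spaces; you should cite that structure explicitly (the paper's own proof makes the parallel, equally implicit, appeal when it asserts $Int(\pi_\ell(\overline{X^{\circledast}}))\neq\emptyset$). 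You must also check that the complete type $q$ (which is over $K$, hence decides each piece of the $K$-definable decomposition) concentrates on a single graph piece $X_i^{\circledast}$ in which the corresponding part $X_i^\nabla$ of $X^\nabla$ is still dense: density of $X^\nabla$ in the union does not pass to an arbitrary piece, but it does hold for the pieces produced by the lemma's proof. With those two points made explicit, your finite-satisfiability argument and the two cases $m=n$ and $m<n$ go through as written.
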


\begin{proof}  Let $X\subseteq K$ be a definable set. Without loss of generality, we may assume that $X$ is infinite. Let $X^{\circledast}\subseteq K^n$ be the $\cL_{or}$-definable set constructed in Lemma \ref{lem:density} and which is such that 
	$X^\nabla$ is contained and dense in $X^{\circledast}$. It follows that $X^\nabla$ is also dense in $\overline{X^{\circledast}}$  and this property can be expressed as follows: for $x=(x_0,\ldots,x_{n-1})$ 
\begin{equation}\label{eq:1}\tag{E1}
\U\models x\in \overline{X^{\circledast}}\leftrightarrow (\forall \epsilon>0)( \exists z\in X)[|\bar{\delta}^{n-1}(z)-x|<\epsilon].
\end{equation}
This shows that $\overline{X^{\circledast}}$ is $c_\delta(X)$-definable. 

Let $\dim(\overline{X^{\circledast}})=\ell\leq n$. Notice that $\ell\geqslant 1$ as $X$ is infinite. Define $Y:=Int(\pi_\ell(\overline{X^{\circledast}}))$.  By Lemma \ref{lem:algind}, $Y\neq \emptyset$. Moreover, $Y$ is $c_\delta(X)$-definable. 

By Proposition \ref{prop:strong-density-ominimal}, 
let $u=(u_0,u_1\ldots,u_{n-1})\in \overline{X^{\circledast}}$ be such that for $\hat{u}=(u_0,\ldots,u_{\ell-1})$ 
we have that $\hat{u}\in Y$, $\dim(\hat{u}/K)=l$ and $tp(\hat{u}/K)$ is $c(Y)$-definable. 
Let $\epsilon_0$ realise the type $0^+$ over $K(u)^{rc}$. Since $u\in \overline{X^{\circledast}}$, 
by (\ref{eq:1}) applied to $\epsilon_0$, let $a\in X$ be such that $|\bar{\delta}^n(a)-u|<\epsilon_0$. 
By Lemma \ref{lem:otypes}, we have that $tp(\bar{\delta}^{\ell-1}(a)/K)=tp(\hat{u}/K)$. 
Therefore, $tp(\bar{\delta}^{\ell-1}(a)/K)$ is  
$c(Y)$-definable, so in particular $c_\delta(X)$-definable. We split in cases. 

If $\ell<n$, then since $\dim(Y)=\ell$ we have that for all $x\in X$, 
\begin{equation}\label{eq:2}\tag{E2}
	\text{$\delta^{k}(x)\in dcl^{eq}(c(Y),\bar{\delta}^{\ell-1}(x))$ for all $k\geq \ell$}. 
\end{equation}
Therefore, by Lemma \ref{lem:dcltype} and (\ref{eq:2}) we have that $tp(\bar{\delta}^{k}(a)/K)$ is $c_\delta(X)$-definable for every $k\geqslant 0$. This implies that $tp_\delta(a/K)$ is $c_\delta(X)$-definable by Lemma \ref{forgetful}. 

Now suppose that $\ell=n$. Then $\dim(\bar{\delta}^{n-1}(a)/K)=n$, so by Lemma \ref{lem:extep2} there is $b\in \U$ such that $tp(\bar{\delta}^{n-1}(b)/K)=tp(\bar{\delta}^{n-1}(a)/K)$ and $\delta^k(b)=0$ for all $k\geqslant n$. This implies that $\bar{\delta}^{n-1}(b)\in X^{\circledast}$ and therefore $b\in X$ (by Lemma \ref{lem:density}). Since $tp(\bar{\delta}^{n-1}(a)/K)$ is $c(Y)$-definable, by Lemma \ref{lem:dcltype}, $tp(\bar{\delta}^{k}(b)/K)$ is also $c(Y)$-definable for all $k\geqslant 0$.
Therefore by Lemma \ref{forgetful}, $tp_\delta(b/K)$ is $c(Y)$-definable and hence, in particular, $c_\delta(X)$-definable. 

\end{proof}

\begin{proof}[Proof of Theorem \ref{thm:densityoftypes}] The theorem follows from Proposition \ref{prop:strong-1-density} and Proposition \ref{lem:1-to-n}. 
\end{proof}

\section{Elimination of imaginaries}\label{sec:EI}

The former proof of elimination of imaginaries for CODF, given by the third author \cite{point2011}, 
uses the \emph{d\'emontage} of semialgebraic sets in real closed fields (a fine decomposition of semialgebraic sets), 
as well as elimination of imaginaries for RCF. 
We will also use that last result together with the following general criterion due to E. Hrushovski \cite{hrushovski2014}. The formulation below is taken from \cite{johnsonthesis}. 

\begin{theorem}[{\cite[Theorem 6.3.1]{johnsonthesis}}]\label{criterion}
Let $T$ be an $\cL$-theory, with home sort $\U$ (meaning we work in a monster model $\U$ such that $\U^{eq}= dcl^{eq}(\U)$).
Let $\G$ be some collection of sorts. Then $T$ has elimination of imaginaries in the sorts $\G$, if the following conditions all hold:
		\begin{enumerate}	
		\item For every non-empty definable set $X\subset \U^1$, there is an $acl^{eq}(c(X))$-definable type in $X$.
		\item Every definable type in $\U^n$ has a code in $\G$, that is, there is some (possibly infinite) tuple from $\G$ which is interdefinable with a code for $p$.
		\item Every finite set of finite tuples from $\G$ has a code in $G$. 
		That is, if $S$ is a finite set of finite tuples from $G$, 
		then $c(S)$ is interdefinable with a finite tuple from $\G$.
	\end{enumerate}
\end{theorem}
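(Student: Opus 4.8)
The plan is to deduce elimination of imaginaries in $\G$ from the three hypotheses by showing that every element (or finite tuple) $e$ of $\U^{eq}$ is interdefinable with a tuple from $\G$. \textbf{Reduction to $E$-classes.} By definition of $\U^{eq}$ one may write $e=[a]_E$ for a $\emptyset$-definable equivalence relation $E$ on $\U^m$ and some $a\in\U^m$; put $Y:=[a]_E\subseteq\U^m$, the corresponding $E$-class. A short automorphism argument shows that $Y$ is $e$-definable and, conversely, that any $\sigma\in Aut_\cL(\U)$ with $\sigma(Y)=Y$ fixes $e$, so that $e$ is interdefinable with a code $c(Y)$ of $Y$; more importantly, the $\emptyset$-definable quotient map $f\colon x\mapsto[x]_E$ satisfies $f(b)=e$ for \emph{every} $b\in Y$. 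It therefore suffices to produce a $\G$-tuple interdefinable with $e$ out of the data of $Y$.

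\textbf{A definable type in $Y$ and its canonical base.} Condition (1) is exactly the statement that definable $1$-types are dense over $a$-codes in $T$, so by Proposition \ref{lem:1-to-n} (in the $a$-code version) definable $m$-types are dense over $a$-codes as well; hence there is a global definable type $p$ with $Y\in p$ which is $acl^{eq}(c(Y))$-definable. Thus its canonical base $\mathrm{Cb}(p)$ lies in $acl^{eq}(e)$. The crucial point is the converse: since $f(x)=e$ belongs to $p$ (because $p\vdash x\in Y$) while $f(x)=y$ belongs to $p$ for no other value of $y$, the element $e$ is the unique realisation of the $\mathrm{Cb}(p)$-definable formula $d_p x\,(f(x)=y)$, whence $e\in dcl^{eq}(\mathrm{Cb}(p))$. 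So $\mathrm{Cb}(p)$ and $e$ are interdefinable up to algebraicity: $\mathrm{Cb}(p)\in acl^{eq}(e)$ and $e\in dcl^{eq}(\mathrm{Cb}(p))$.

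\textbf{Closing the algebraic gap with (2) and (3).} By condition (2), $\mathrm{Cb}(p)$ is interdefinable with a (possibly infinite) tuple $\bar g$ from $\G$; since $e\in dcl^{eq}(\bar g)$, already some finite subtuple $\bar g_1$ of $\bar g$ satisfies $e\in dcl^{eq}(\bar g_1)$, say $e$ is the unique $y$ with $\theta(\bar g_1,y)$ for one fixed $\emptyset$-definable formula $\theta(x,y)$, and moreover $\bar g_1\in dcl^{eq}(\mathrm{Cb}(p))\subseteq acl^{eq}(e)$. Let $\{\bar g_1,\dots,\bar g_k\}$ be the orbit of $\bar g_1$ under $Aut_\cL(\U/e)$, which is finite by algebraicity; conjugating by automorphisms fixing $e$, each $\bar g_i$ is a finite tuple from $\G$ for which $e$ is again the unique $y$ with $\theta(\bar g_i,y)$. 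By condition (3) this finite set of finite $\G$-tuples has a code $\bar h$ in $\G$. Then $\bar h\in dcl^{eq}(e)$ because the orbit is $Aut_\cL(\U/e)$-invariant, while $e$ is the unique element of the $\bar h$-definable set $\{y:\theta(\bar g_i,y)\text{ for all }i\le k\}$ — it contains $e$ and is contained in $\{y:\theta(\bar g_1,y)\}=\{e\}$ — so $e\in dcl^{eq}(\bar h)$. Hence $e$ is interdefinable with $\bar h\in\G$, which yields elimination of imaginaries in $\G$.

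\textbf{Main obstacle.} The genuinely delicate point is that density of definable types (condition (1), bootstrapped through Proposition \ref{lem:1-to-n}) only produces a type whose canonical base is \emph{algebraic} over a code for the relevant set, not interdefinable with it; conditions (2) and (3) are used precisely to repackage the finite Galois orbit of that canonical base into a single $\G$-tuple that is honestly interdefinable with $e$. Two further points require routine care: that the $1$-dimensional density hypothesis in (1) does propagate to all $\U^m$ via Proposition \ref{lem:1-to-n}, and that an $acl^{eq}(e)$-definable type genuinely has its canonical base contained in $acl^{eq}(e)$.
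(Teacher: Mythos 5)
The paper does not prove this theorem: it is quoted verbatim from Johnson's thesis (Theorem 6.3.1) and used as a black box, so the only available comparison is with the proof in that reference. Your argument is correct and is essentially that standard proof — reduce to an $E$-class $Y$ with $e\sim c(Y)$, use condition (1) bootstrapped by Proposition \ref{lem:1-to-n} to get an $acl^{eq}(e)$-definable type concentrating on $Y$ whose canonical base defines $e$ via $d_px\,E(x,z)$, then use (2) to realise that canonical base in $\G$ and (3) to code the finite $Aut_\cL(\U/e)$-orbit of a suitable finite subtuple; the only points you elide (passing from the type over a model to its global extension via the defining scheme, and treating $f(x)=y$ as $E(x,z)$) are routine.
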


We will need the following characterizations of definable types over models in RCF and CODF. The characterization for RCF follows from a more general characterization of definable types in o-minimal structures due to Marker and Steinhorn \cite{marker-steinhorn1994}.

\begin{theorem}[{Marker-Steinhorn \cite[Theorem 4.1]{marker-steinhorn1994}}]\label{thm:MS} 
	Let $\U$ be a monster model of RCF, $K$ be a model and $a\in \U^n$. Then the type $tp(a/K)$ is $K$-definable  
	if and only if $K$ is Dedekind complete in $K(a)^{rc}$. 
 \end{theorem}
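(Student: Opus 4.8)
The plan is to prove both directions. For the easy direction, suppose $tp(a/K)$ is $K$-definable and suppose toward a contradiction that $K$ is not Dedekind complete in $K(a)^{rc}$. Then there is an element $c \in K(a)^{rc}$ realizing a cut in $K$ that is not realized in $K$, i.e.\ the sets $L = \{q \in K : q < c\}$ and $U = \{q \in K : q > c\}$ partition $K$ with no element of $K$ between them. Writing $c$ as a quotient of polynomials in $a$ with coefficients in $K$ (using that $K(a)^{rc}$ is generated over $K(a)$ by real closure, and clearing square roots at the cost of enlarging the formula), the set $\{y : y < c\}$ becomes a $K(a)$-definable, hence $Ka$-definable, subset of the home sort; by $K$-definability of $tp(a/K)$, the truth of $q < c$ for $q \in K$ is governed by a formula $\psi(q)$ with parameters in $K$. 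But $\psi(K)$ is then an $\emptyset$-definable-over-$K$ set in the o-minimal structure $K$ whose solution set is exactly the cut $L$; since $K \models \mathrm{RCF}$ is o-minimal, $\psi(K)$ is a finite union of points and intervals with endpoints in $K$, so the cut $L$ has a supremum in $K$, contradicting that the cut is irrational over $K$ unless it is already realized. One must handle cuts at $\pm\infty$ and the case $c \in K$ separately, but these are trivial. This gives Dedekind completeness.

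For the hard direction, assume $K$ is Dedekind complete in $K(a)^{rc}$; I want to show $tp(a/K)$ is $K$-definable. The plan is to induct on $n = \ell(a)$ using that $\dim$ is a fibered dimension function (Lemma \ref{lem:additivity}) together with the transitivity of definability (Lemma \ref{transitivity}): write $a = (a', a_n)$ with $a' \in \U^{n-1}$. First I would check $K$ is Dedekind complete in $K(a')^{rc}$ — immediate, as this is a subfield of $K(a)^{rc}$ — so by induction $tp(a'/K)$ is $K$-definable. It then suffices, by Lemma \ref{transitivity}, to show $tp(a_n / K(a')^{rc})$ is $K(a')^{rc}$-definable, i.e.\ we are reduced to the case $n = 1$ over the base field $K' := K(a')^{rc}$, where the hypothesis becomes: $K'$ is Dedekind complete in $K'(a_n)^{rc}$. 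Here one must be slightly careful: Dedekind completeness of $K$ in $K(a)^{rc}$ must be shown to imply Dedekind completeness of $K'$ in $K'(a_n)^{rc}$ — this is where the real content sits, and it uses that any cut of $K'$ realized in $K'(a_n)^{rc}$ but not in $K'$ would, by restricting to $K$ and using o-minimality of $K'$ over $K$ (every $K'$-definable subset of the line is a finite union of intervals with endpoints in $\mathrm{dcl}(Ka') = K'$), pull back to a cut of $K$ realized in $K(a)^{rc}$ but not in $K$.

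So the crux is the one-variable case: $K$ Dedekind complete in $K(a)^{rc}$, $a \in \U$, show $tp(a/K)$ is $K$-definable. By o-minimality, $tp(a/K)$ is determined by the cut $a$ realizes in $K$. If $a \in K$ there is nothing to do; if $a$ realizes a cut at $\pm\infty$ or an $\emptyset$-definable cut, done. Otherwise $a$ realizes a cut $(L,U)$ with, say, $b := \sup L \in K$ (the cut has a one-sided endpoint $b \in K \cup \{\pm\infty\}$) — and here is where Dedekind completeness is used: the cut cannot be "irrational over $K$", because an element of $K(a)^{rc}$ realizing such a cut would witness failure of Dedekind completeness. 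Hence $a$ realizes either $b^+$ or $b^-$ for some $b \in K \cup \{\pm\infty\}$, and for a quantifier-free $\cL_{or}$-formula $\varphi(x,y)$, deciding $\varphi(a, c) \in tp(a/K)$ for $c \in K$ amounts to asking whether $\varphi(x,c)$ holds on a small right (resp.\ left) neighbourhood of $b$ — equivalently whether $b \in \overline{\varphi(\U,c) \cap (b, \infty)}$ — which is a condition on $(b,c)$ expressible by an $\cL_{or}$-formula with the single parameter $b \in K$. This exhibits the definition schema, completing the induction. The main obstacle I anticipate is the bookkeeping in the inductive step: verifying cleanly that Dedekind completeness passes from $(K, K(a)^{rc})$ to $(K(a')^{rc}, K(a')^{rc}(a_n)^{rc})$, for which the key tool is that $K(a')^{rc}$ is o-minimal and every definable subset of its line has endpoints in $K(a')^{rc}$ itself, so cuts do not proliferate.
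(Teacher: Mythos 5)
This theorem is not proved in the paper (it is quoted from Marker--Steinhorn), so I am assessing your argument on its own terms. Your first direction (definable $\Rightarrow$ Dedekind complete) is correct: for $c\in K(a)^{rc}=dcl(Ka)$, the trace $\{q\in K: q<c\}$ equals $\psi(K)$ for some $K$-formula $\psi$ by the defining scheme, and a downward-closed definable subset of the o-minimal structure $K$ is an initial segment with endpoint in $K\cup\{\pm\infty\}$, so the cut of $c$ over $K$ is rational. Your one-variable case of the converse is also fine.

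The multivariable inductive step, however, has a genuine gap exactly at the point you identify as ``where the real content sits''. The transfer lemma you need --- Dedekind completeness of $K$ in $K(a)^{rc}$ implies Dedekind completeness of $K':=K(a')^{rc}$ in $K'(a_n)^{rc}$ --- is false. Take $K=\R$, let $a'\in\U$ be a positive infinite element, and let $a_n=c\in\U$ satisfy $(a')^{m}<c^{n}$ whenever $m/n<\sqrt{2}$ and $c^{n}<(a')^{m}$ whenever $m/n>\sqrt{2}$ (this partial type is finitely satisfiable, hence realized in $\U$). Every finite element of $\R(a',c)^{rc}$ is infinitely close to a real number (the residue field of the convex hull of $\R$ is an archimedean ordered field containing $\R$, hence equal to $\R$), so $\R$ \emph{is} Dedekind complete in $\R(a',c)^{rc}$. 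Yet $c$ realizes the cut of $\R(a')^{rc}$ whose lower part is $\{x : x<(a')^{q}\text{ for some rational }q<\sqrt{2}\}$, and this cut has no endpoint in $\R(a')^{rc}$ because the archimedean value group of $\R(a')^{rc}$ is only $\Q\cdot v(a')$. Hence $\{w\in K': w<c\}$ is not a definable subset of $K'$, so $tp(a_n/K')$ is \emph{not} $K'$-definable, and the hypothesis you want to feed into Lemma \ref{transitivity} simply fails. Your sketched proof of the transfer also visibly breaks on this example: the cut of $c$ restricted to $K=\R$ is the cut at $+\infty$, which is rational, so nothing irrational ``pulls back'' to $K$. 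This is precisely why Marker--Steinhorn is a substantial theorem: definability of $tp(a'a_n/K)$ does not decompose into definability of $tp(a'/K)$ together with definability of $tp(a_n/K(a')^{rc})$, and the known proofs of the reduction to one variable (Marker--Steinhorn's original argument, Pillay's, or the honest-definitions approach) all require an idea that your plan does not contain. The inductive step must be replaced; the rest of your outline is sound.
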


\begin{theorem}[{\cite[Proposition 3.6]{brouette2017}}]\label{thm:Brouette} Let $\U$ be a monster model of CODF, $K$ be a model and $a=(a_1,\ldots,a_n)\in \U^n$. 
Then, the type $tp_\delta(a/K)$ is $K$-definable if and only if $K$ is Dedekind complete in $K(\bar{\delta}(a_1),\ldots,\bar{\delta}(a_n))^{rc}$. 
\end{theorem}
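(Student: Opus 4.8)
The plan is to reduce Theorem~\ref{thm:Brouette} to the Marker--Steinhorn criterion (Theorem~\ref{thm:MS}) applied to the finite jets of $a$, using Lemma~\ref{forgetful} for the routine direction and Lemma~\ref{lem:density} for the substantial one. Write $\bar a=(a_1,\ldots,a_n)$ and, for $m\geq 0$, $\bar\delta^m(\bar a):=(\bar\delta^m(a_1),\ldots,\bar\delta^m(a_n))$, and note that $L:=K(\bar\delta(a_1),\ldots,\bar\delta(a_n))^{rc}=\bigcup_{m\geq 0}K(\bar\delta^m(\bar a))^{rc}$ is an increasing union of real closed subfields of $\U$.

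First I would prove the implication from Dedekind completeness to definability. Assuming $K$ is Dedekind complete in $L$, it is Dedekind complete in every intermediate subextension, in particular in each $K(\bar\delta^m(\bar a))^{rc}$; by Theorem~\ref{thm:MS} the $\cL_{or}$-type $tp(\bar\delta^m(\bar a)/K)$ is $K$-definable for every $m$. An evident tuple version of Lemma~\ref{forgetful} (the same proof, carried out with the block of variables $(x_1,\ldots,x_n)$ in place of a single $x$) then gives that $tp_\delta(\bar a/K)$ is $K$-definable.

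For the converse, assume $tp_\delta(\bar a/K)$ is $K$-definable and suppose, towards a contradiction, that $K$ is not Dedekind complete in $L$: fix $c\in L$ such that $D:=\{v\in K:v<c\}$ is nonempty, bounded above in $K$, and has no supremum in $K$. Since $c\in K(\bar\delta^m(\bar a))^{rc}$ for some $m$, we have $c\in dcl_{\cL_\delta}(K\cup\{\bar a\})$, so by Lemma~\ref{lem:dcltype} the type $tp_\delta(c/K)$ is $K$-definable; evaluating the definition of the formula $v<x$ shows that $D$ is an $\cL_\delta$-definable subset of $K$. Now apply Lemma~\ref{lem:density}: $D$ is contained and dense in $\pi_1(D^{\circledast})$ for an $\cL_{or}$-definable set $D^{\circledast}$, hence the closure $\overline{D}$ (taken in $\U$) equals $\overline{\pi_1(D^{\circledast})}$ and is therefore semialgebraic, while it remains invariant under every $\cL_\delta$-automorphism of $\U$ fixing $K$ pointwise (because $D$ is $\cL_\delta$-definable over $K$). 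As $\overline{D}$ is also bounded above (it is contained in $(-\infty,v_0]$ for any upper bound $v_0\in K$ of $D$), it is a finite union of points and closed intervals and has a largest element $s\in\U$, which is one of its endpoints; since the set is automorphism-invariant over $K$, we get $s\in acl_{\cL_\delta}(K)=K$. But $D$ is a down-set of $K$ dense in $\overline{D}$, so $\sup D$ exists in $K$ and equals $s$ — contradicting the choice of $c$.

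The delicate step is this converse direction, where $\cL_\delta$-definability of a type must be converted into an order-theoretic property of the extension $K\subseteq L$; this is exactly where Lemma~\ref{lem:density} is indispensable, as it is what forces the closure of a definable subset of $K$ to be semialgebraic. The only bookkeeping that deserves care is (i) that this semialgebraic closure, being invariant under $\cL_\delta$-automorphisms over $K$, has its relevant endpoint in $K$, which rests on $acl_{\cL_\delta}(K)=K$ (valid since $K\prec\U$), and (ii) that a bounded-above definable down-set of $K$ therefore has a supremum in $K$.
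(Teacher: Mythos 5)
First, a caveat: the paper does not prove this statement — it imports it as \cite[Proposition 3.6]{brouette2017} — so your argument can only be judged on its own terms. Your reduction is the natural one, and the forward direction (Dedekind completeness of $K$ in $L=\bigcup_m K(\bar{\delta}^m(\bar a))^{rc}$ implies definability, via Theorem \ref{thm:MS} applied to each $tp(\bar{\delta}^m(\bar a)/K)$ followed by the tuple version of Lemma \ref{forgetful}) is correct. The converse direction, however, contains a genuine error in the handling of closures. You define $D=\{v\in K: v<c\}$, a subset of $K$, and then assert that ``the closure $\overline D$ (taken in $\U$) equals $\overline{\pi_1(D^{\circledast})}$ and is therefore semialgebraic,'' with a largest element in $acl_{\cL_\delta}(K)=K$. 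None of this holds for the set of $K$-points: $D$ is a small, non-definable subset of $\U$; its closure in $\U$ is $D$ itself (distinct points of $K$ are never infinitesimally close, and by saturation no point of $\U\setminus K$ is a limit of points of $K$); it is not a finite union of points and closed intervals; and it has no largest element in $\U$, precisely because the cut it determines is unrealized in $K$ — by saturation the set of its upper bounds in $\U$ has no minimum. Moreover, the density furnished by Lemma \ref{lem:density} is density of $D$ in $\pi_1(D^{\circledast})(K)$ with respect to $\epsilon\in K$, which does not yield density of $D$ in any subset of $\U$. So the step producing $s$ fails as written.

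The repair is short and preserves your idea: replace the $K$-traces by the definable sets they come from. Let $\psi$ be the quantifier-free $\cL_\delta(K)$-formula (obtained from the $d_p$-definition of $v<x$ and quantifier elimination) with $\psi(K)=D$. The conclusion of Lemma \ref{lem:density} — containment and density of $\psi$ in $\pi_1(D^{\circledast})$ — is expressible by $\cL_\delta(K)$-sentences true in $K$, hence true in $\U$; therefore $\overline{\psi(\U)}=\overline{\pi_1(D^{\circledast})(\U)}$ is a closed, $\cL_{or}(K)$-definable set, nonempty and bounded above by any upper bound $v_0\in K$ of $D$ (boundedness also transfers from $K$ to $\U$). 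By o-minimality it has a maximum $s$, and $s\in dcl_{\cL_{or}}(K)=K$ since the set is $\cL_{or}(K)$-definable — no automorphism or $acl$ argument is needed. Finally $s=\sup_K D$: it is an upper bound because $D\subseteq\psi(\U)$; and if $s'\in K$ satisfies $s'<s$, then since $s$ lies in the closure of $\pi_1(D^{\circledast})(\U)$ there is $t\in\pi_1(D^{\circledast})(\U)$ with $t>s'$, hence by elementarity some $t'\in\pi_1(D^{\circledast})(K)$ with $t'>s'$, hence by the density clause of Lemma \ref{lem:density} (applied in $K$ with $\epsilon=t'-s'$) some $d\in D$ with $d>s'$. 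Thus $D$ has a supremum in $K$, contradicting the choice of $c$. With this correction your proof goes through.
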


The following lemma ensures condition (2) of the previous criterion for CODF. 

\begin{lemma}\label{lem:codesfortypes} 
	Let $\U$ be a monster model of CODF and
	$q\in S_n^{CODF}(\U)$ be a definable type. Then $q$ has a code in $\U$. 
\end{lemma}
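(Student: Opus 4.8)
The plan is to transfer the problem to the $\cL_{or}$-reduct, where elimination of imaginaries is available, by exploiting the fact that the Marker--Steinhorn and Brouette characterizations of definability of types (Theorems~\ref{thm:MS} and~\ref{thm:Brouette}) are governed by \emph{the same} Dedekind-completeness condition.

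First I would fix a monster model $\hat{\U}$ of CODF with $\U\prec\hat{\U}$ and realize $q$ by a tuple $a=(a_1,\dots,a_n)\in\hat{\U}^n$, so that $q=tp_\delta(a/\U)$, and consider the (infinite) real tuple $a^\nabla:=(\bar{\delta}(a_1),\dots,\bar{\delta}(a_n))$ together with the $\cL_{or}$-type $q^*:=tp(a^\nabla/\U)$. Since $q$ is definable, $\U$ being a model of CODF, Theorem~\ref{thm:Brouette} yields that $\U$ is Dedekind complete in $\U\langle a\rangle^{rc}=\U(\bar{\delta}(a_1),\dots,\bar{\delta}(a_n))^{rc}$; since $\U$ is also a model of RCF, Theorem~\ref{thm:MS} applied to each finite subtuple of $a^\nabla$ (and hence to $a^\nabla$) then shows that $q^*$ is an $\cL_{or}$-definable type over $\U$. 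The point of this step is precisely that the two characterizations refer to completeness of $\U$ in the same real closed field $\U(\bar{\delta}(a_1),\dots,\bar{\delta}(a_n))^{rc}$.

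Next I would record that $q$ and $q^*$ determine one another: by quantifier elimination for CODF in $\cL_\delta$ the type $q$ is determined by its quantifier-free part, and the functor $\varphi\mapsto\varphi^*$ gives, for a quantifier-free $\cL_\delta$-formula $\varphi(x,z)$ and $d\in\U$, the equivalence $\varphi(x,d)\in q$ iff $\varphi^*(\bar{x},\bar{\delta}^{\,\bullet}(d))\in q^*$ (and conversely every $\cL_{or}$-formula over $\U$ arises as some $\varphi^*$). Because any $\sigma\in Aut_{\cL_\delta}(\U)$ preserves $\cL_{or}$ and commutes with $\delta$, so that $\sigma(\bar{\delta}^{\,\bullet}(d))=\bar{\delta}^{\,\bullet}(\sigma(d))$, this dictionary gives $\sigma(q)=q$ if and only if $\sigma(q^*)=q^*$. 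Now let $c:=\mathrm{Cb}_{\cL_{or}}(q^*)$ be a code for the $\cL_{or}$-definable type $q^*$; by elimination of imaginaries for RCF, $c$ is $\cL_{or}$-interdefinable with a (possibly infinite) tuple $e$ from $\U$, and since $\cL_{or}\subseteq\cL_\delta$ the automorphisms in $Aut_{\cL_\delta}(\U)$ fixing $c$ are exactly those fixing $e$. Chaining the equivalences, for $\sigma\in Aut_{\cL_\delta}(\U)$ we get $\sigma(q)=q \Leftrightarrow \sigma(q^*)=q^* \Leftrightarrow \sigma(c)=c \Leftrightarrow \sigma(e)=e$, so $e$ is a code for $q$ lying in $\U$, as required (it may be infinite, which is allowed by condition~(2) of Theorem~\ref{criterion}).

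I expect the genuinely substantive point to be the coincidence of the two completeness conditions, which lets us leave $\cL_\delta$ entirely; the remaining obstacle is purely bookkeeping, namely the careful verification through the $\varphi\mapsto\varphi^*$ correspondence that $\cL_\delta$-automorphisms fix $q$ precisely when they fix $q^*$ (equivalently, that an $\cL_{or}$-canonical base of $q^*$ functions as an $\cL_\delta$-code for $q$), for which one uses that differential automorphisms commute with $\delta$.
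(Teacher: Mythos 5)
Your proposal is correct and follows essentially the same route as the paper: both use Theorem~\ref{thm:Brouette} to extract the Dedekind-completeness condition, feed it into Theorem~\ref{thm:MS} to get definability of the $\cL_{or}$-types of the tuples $\bar{\delta}^m(a_i)$, invoke elimination of imaginaries for RCF to code those, and verify via the $\varphi\mapsto\varphi^*$ translation and quantifier elimination that these real codes jointly code $q$. The only (immaterial) difference is that the paper first restricts $q$ to a small model $K$ and works with the finite-variable types $p_m$ one at a time, whereas you handle the full infinite reduct type $q^*$ over $\U$ in one step.
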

\begin{proof}  
	Suppose that $q$ is definable over a model $K$. It suffices to show that $p:=q|K$ (the restriction of $q$ to $K$) has a code in $K$. 
	 Let $(a_1,\cdots,a_n)$ be a realization of $p$. By Theorem \ref{thm:Brouette}, 
	we have that $K$ is Dedekind complete in $K(\bar{\delta}(a_1),\ldots, \bar{\delta}(a_n))^{rc}$. 
	For each $m\geqslant1$, let 
\[
p_m:= tp(\bar{\delta}^m(a_1),\ldots, \bar{\delta}^m(a_n)/K).
\]  
By Theorem \ref{thm:MS}, each type $p_m$ is definable. By elimination of imaginaries in RCF, let $b_m$ be a code for $p_m$ in $K$. We show that $(b_m)_{m\geqslant 1}$ is a code for $p$. Let $\sigma\in Aut_{\cL_{\delta}}(\U)$. Since $p\vdash p_m$ for all $m$, one direction is straighforward (namely if $\sigma(p)=p$, then $\sigma(p_m)=p_m$ and so $\sigma(b_m)=b_m$). For the converse, suppose that $\sigma(b_m)=b_m$ for all $m\geqslant 1$. Then $\sigma(p_m)=p_m$ for all $m\geqslant 1$. Let us show that $\sigma(p)=p$.

Let $\varphi(x_1,\ldots, x_n,y_1,\cdots,y_m)$ be an $\cL_{\delta}$-formula (without parameters). By quantifier elimination in CODF, we may assume $\varphi$ is quantifier-free. Let $\varphi^\ast(\overline{x}_1, \ldots, \overline{x}_n,\overline{y_1},\cdots,\overline{y}_m)$ be its associated $\cL_{or}$-formula. Without loss of generality, and to ease notation, let $k$ be an integer such that $\ell(\overline{x_i})=\ell(\overline{y_j})=k$ for all $i\in\{1,\ldots,n\}$ and all $j\in\{1,\ldots,m\}$. For $c=(c_1,\ldots,c_m)\in \U^m$, let $\bar{\delta}^k(c)$ denote the tuple $(\bar{\delta}^k(c_1),\ldots,\bar{\delta}^k(c_m))$. For any tuple $c\in K^m$ we have that:

\begin{align*}
	\U\models \varphi(a_1,\ldots, a_n,c) 	& \Leftrightarrow  \U\models \varphi^\ast(\bar{\delta}^{k}(a_1),\ldots, \bar{\delta}^{k}(a_n),\bar{\delta}^{k}(c))\\
																	& \Leftrightarrow  \U\models \varphi^\ast(\bar{\delta}^{k}(\sigma(a_1)),\ldots, \bar{\delta}^{k}(\sigma(a_n)),\bar{\delta}^{k}(c))\\
																	& \Leftrightarrow  \U\models \varphi(\sigma(a_1),\ldots, \sigma(a_n), c) \\ 
\end{align*}
Indeed, the first and third equivalences hold by definition of $\varphi^*$ while the second equivalence holds since $\sigma(p_k)=p_k$ implies that ($\varphi^\ast\in p_k\;\implies \varphi^\ast\in \sigma(p_k))$.
\end{proof}

\begin{theorem} \label{ei} 
	CODF has elimination of imaginaries. 
\end{theorem}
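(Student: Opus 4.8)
The strategy is to verify the three hypotheses of Hrushovski's criterion (Theorem \ref{criterion}) for $T = \mathrm{CODF}$, taking as home sort the field sort $\U$ and as collection of sorts $\G$ the imaginary sorts of RCF (which eliminates imaginaries, so these are interdefinable with finite tuples from $\U$). First I would observe that condition (1) of the criterion — for every non-empty definable $X \subseteq \U^1$, there is an $acl^{eq}(c(X))$-definable type in $X$ — follows immediately from Theorem \ref{thm:densityoftypes} (indeed from Proposition \ref{prop:strong-1-density}), which gives the stronger conclusion of a $c(X)$-definable type in $X$; a fortiori it is $acl^{eq}(c(X))$-definable.

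\textbf{Condition (2).} Next I would establish condition (2): every definable type $q \in S_n^{\mathrm{CODF}}(\U)$ has a code in $\G$. This is precisely the content of Lemma \ref{lem:codesfortypes}, which produces a code $(b_m)_{m \geqslant 1}$ for $q$ consisting of RCF-codes $b_m$ for the RCF-types $p_m = tp(\bar\delta^m(a_1),\ldots,\bar\delta^m(a_n)/K)$ of the truncated jets of a realization. Since RCF eliminates imaginaries, each $b_m$ is interdefinable with a finite tuple from $\U$, hence the (possibly infinite) tuple $(b_m)_m$ is a tuple from $\G$ interdefinable with a code for $q$, as required.

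\textbf{Condition (3).} Then I would check condition (3): every finite set $S$ of finite tuples from $\G$ has a code interdefinable with a finite tuple from $\G$. Since $\G$ consists of the RCF-imaginary sorts and RCF has elimination of imaginaries, a finite set of finite tuples from $\G$ is, up to interdefinability, a finite set of finite tuples from $\U$; and such a finite set is coded by a finite tuple from $\U^{eq}$, which by elimination of imaginaries for RCF is interdefinable with a finite tuple from $\U$, hence from $\G$. Concretely one can code a finite subset of $\U^k$ by the (finite) tuple of coefficients of the polynomial whose zero set is that finite set, together with the relevant multiplicities, or simply invoke EI for RCF directly. With conditions (1), (2), (3) in hand, Theorem \ref{criterion} yields that CODF eliminates imaginaries in the sorts $\G$; since $\G$ is interdefinable with the home sort, CODF eliminates imaginaries outright.

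\textbf{Main obstacle.} The genuinely substantive inputs are Theorem \ref{thm:densityoftypes} (for condition (1)) and Lemma \ref{lem:codesfortypes} (for condition (2)), both of which have been proved above; given those, the remaining work — condition (3) and the bookkeeping to deduce EI in the home sort from EI in $\G$ — is routine and relies only on elimination of imaginaries for RCF. Thus the proof is essentially an assembly of previously established results via Hrushovski's criterion, and there is no real obstacle beyond carefully citing the three hypotheses.
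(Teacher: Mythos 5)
Your proposal is correct and follows essentially the same route as the paper: verify Hrushovski's criterion (Theorem \ref{criterion}) with condition (1) supplied by Theorem \ref{thm:densityoftypes}, condition (2) by Lemma \ref{lem:codesfortypes}, and condition (3) by the standard coding of finite sets in a field via elimination of imaginaries for RCF. The only cosmetic difference is that the paper cites the fact that weak elimination of imaginaries implies elimination of imaginaries over a field sort for condition (3), whereas you spell out the reduction directly; both are fine.
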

\begin{proof} This follows by Theorem \ref{criterion}. Condition (1) is implied by Theorem \ref{thm:densityoftypes}, condition (2) corresponds to Lemma \ref{lem:codesfortypes} and condition (3) is straightforward since the main sort is a field (so weak elimination of imaginaries implies elimination of imaginaries \cite[Fact 5.5]{messmer1996}). 
\end{proof}

\begin{remark} It is not clear how to extend the previous theorem to general topological fields with generic derivations as defined in \cite{guzy-point2010} even assuming that definable types are dense over codes (or over $a$-codes). The main obstacle is to show an analog of Lemma \ref{lem:codesfortypes} in the general context. If one tries to mimic the proof given for CODF, one runs into the problem of showing that given a type $tp_\delta(a/K)$, the types $tp(\bar{\delta}^n(a)/K)$ (in the ring language) are definable. Here we used the characterization of definable types in CODF given in Theorem \ref{thm:Brouette}. A substitute to Theorem \ref{thm:Brouette} can be found in a recent paper of Rideau and Simon \cite{rideau17}. Let us recall the precise setting they are working in. Let $T$ be a NIP $\cL$-theory admitting elimination of imaginaries and let $\tilde T$ be a complete $\tilde{L}$-enrichment of $T$. Let $N\models \tilde T$ and let $A=dcl_{\tilde{\cL}}^{eq}(A)\subset N_{\tilde{L}}^{eq}$. In \cite[Corollary 1.7]{rideau17}, the authors provide a sufficient condition on $\tilde T$ implying that the underlying $\cL$-type of any $\tilde{\cL}^{eq}(A)$-definable type over $N$ is $A\cap \mathcal R$-definable, where $\mathcal R$ denotes the set of all $\cL$-sorts. In the case of $T=RCF$ and $\tilde T=CODF$, we get that such $\cL$-type is $A\cap N$-definable. 
However, their sufficient condition requires the existence of a model $M$ of $\tilde T$ whose $\cL$-reduct is uniformly stably embedded in every elementary extension. Somehow surprisingly, in the case of $\tilde T=CODF$ and $T=RCF$, it has been shown that there is such a model. Indeed, building on a previous proof that CODF has archimedean models, the first author showed in \cite{brouettethesis} that $\R$ can be endowed with a derivation in such a way that its expansion becomes a model of CODF. By the above theorem of Marker and Steinhorn (Theorem \ref{thm:MS}), $\R$ (viewed as an ordered field) is uniformly stably embedded in every elementary extension. 
\end{remark}

\section{$\delta$-dimension and definable types in CODF\label{delta-dim}}

The fact that closed ordered differential fields may be endowed with a fibered dimension was first proven in \cite{BMR}. Their definition relies on a cell decomposition theorem in CODF. Alternatively, one can obtain the same dimension function proceeding as in \cite{vandendries1989}, as was done in \cite{GP12} working in a broader differential setting (see \cite[Proposition 2.13]{GP12}). Here we will call such a dimension function the \emph{$\delta$-dimension} and denote it by $\dd$ (in place of $t$-$\dim$, the notation used in \cite{GP12}). Throughout this section let $\U$ be a monster model of CODF and $K\subset \U$ be a small model. 

Instead of providing the original definition given in \cite{GP12} of $\dd$, we will use a characterization in terms of the following closure operator, also proven to be equivalent in \cite{GP12}. For $B\subseteq \U$ and $a\in \U$, say that $a\in cl(B)$ if and only if there is a differential polynomial $q\in \Q\langle B\rangle\{x\}\setminus\{0\}$ such that $q(a)=0$. This closure operator $cl$ defines a pre-geometry on $\U$ (see \cite[Lemma 2.8]{GP12}). The natural notion of relative independence induced by $cl$ is defined as follows: a set $A$ is $cl$-independent over $B$ if and only if for every $a\in A$ one has that 
\[
a\notin cl(B\cup (A\setminus\{a\})).  
\] 
Finally, define  
\begin{equation}\label{eq:3}\tag{E3}
\text{cl-$\dim$}(a/B) := \max \{ |C| : C\subseteq \Q\langle Ba\rangle, \text{ $C$ is $cl$-independent over $B$} \}.  
\end{equation}

Let $a\in \U^n$, $b\in \U^m$ and $B\subseteq \U$. Notice that $\dc(a/B)\leq n$ and that $\dc(a/B)\leq \dc(ab/B)$. Moreover, $\dc$ is additive, that is,  
\[
\dc(ab/B)=\dc(a/B)+\dc(b/Ba). 
\]

The following proposition characterizes the $\delta$-dimension in terms of $cl$ and gathers a second property that we will later need. 

\begin{proposition}[Guzy-Point]\label{prop:deltadim} Let $X\subseteq K^n$ be a definable set. Then
\begin{enumerate}
\item $\dd(X)=\max\{\dc(a/K) : a\in X(\U)\}$, 
\item if $n=1$, $\varphi(x)$ is a quantifier-free $\cL_\delta$-formula that defines $X$, then $\dd(X)=1$ if and only $\varphi^*(\U)$ contains a non-empty open set.  
\end{enumerate}
\end{proposition}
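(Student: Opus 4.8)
\textbf{Proof plan for Proposition \ref{prop:deltadim}.}

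The plan is to derive both statements from the characterization of $\dd$ via the pre-geometry $cl$ (together with the equivalence, proven in \cite{GP12}, between $\dd$ and the dimension function coming from $cl$). For part (1), the key observation is that $\dc$ realizes on types exactly as $d$ does in Proposition \ref{real}: given a $K$-definable $X$, I would invoke the realization property to produce $a\in X(\U)$ with $\dc(a/K)=\dd(X)$, using that $\dc$ is itself a fibered dimension function on definable sets (which is precisely what \cite[Proposition 2.13]{GP12} provides); conversely, for any $a\in X(\U)$ the inclusion $tp_\delta(a/K)\vdash (x\in X)$ together with definition \eqref{eq:type-dimension} (applied to the dimension $\dd$) gives $\dc(a/K)=\dd(tp_\delta(a/K))\leq \dd(X)$. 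So part (1) is essentially a translation of the abstract results of Section \ref{sec:prelim} into the $\delta$-setting, once one knows $\dd$ agrees with the dimension attached to the pre-geometry $cl$.

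For part (2), I would argue through the ambient ($\cL_{or}$) dimension of $\varphi^*(\U)\subseteq \U^{n+1}$ for $n=1$, i.e., through $\dim(\varphi^*(\U))$. The direction ``$\varphi^*(\U)$ contains a non-empty open set $\Rightarrow \dd(X)=1$'' is the easy one: by density of primitives (Lemma \ref{constants}), any non-empty open subset of $\U^{n+1}$ contains a point of the form $\bar\delta^{n}(a)$ with $\delta^{n+1}(a)$ prescribed; picking such $a$ freely (and using Lemma \ref{lem:algind} to see that prescribing $\delta^{n+1}(a)$ algebraically independently keeps $a$ out of $cl(K)$) yields $a\in X(\U)$ with $\dc(a/K)\geq 1$, hence $\dd(X)=1$ by part (1). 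For the converse, suppose $\varphi^*(\U)$ has empty interior, so $\dim(\varphi^*(\U))\leq n$; by Lemma \ref{lem:polynomial} (applied over an appropriate real closed field), $\varphi^*(\U)$ lies in the zero locus of a non-trivial polynomial $P\in \U[x_0,\dots,x_n]$. Then for every $a\in X(\U)$ we have $P(\bar\delta^{n}(a))=0$, i.e., $\bar\delta^{n}(a)$ is a root of the differential polynomial $P(x,\delta x,\dots,\delta^n x)\in K\{x\}\setminus\{0\}$, so $a\in cl(K)$ and $\dc(a/K)=0$; by part (1), $\dd(X)=0\neq 1$.

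The main obstacle I anticipate is bookkeeping in the converse of part (2): one must be careful that ``$\varphi^*$ has empty interior'' really forces $\dim(\varphi^*(\U))\le n$ in the sense needed for Lemma \ref{lem:polynomial} (which is stated for semi-algebraic subsets of $K^n$, so one works over a real closed subfield of $\U$ containing the finitely many coefficients of the formula), and that the resulting polynomial relation genuinely witnesses $a\in cl(K)$, i.e., gives a \emph{non-zero} differential polynomial over $\Q\langle K\rangle$ vanishing at $a$ — this is immediate once $P$ is non-trivial. A secondary subtlety is making precise, in part (1), that the realization argument of Proposition \ref{real} applies verbatim to $\dd$; this only requires that $\dd$ be a fibered dimension function on $\Def(\U)$, which is exactly \cite[Proposition 2.13]{GP12}, so no new work is needed there.
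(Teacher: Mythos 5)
The paper does not actually prove this proposition: it is attributed to Guzy--Point and the ``proof'' is a two-line citation (\cite[Lemma 2.11]{GP12} for part (2), \cite[Proposition 3.12]{GP12} for part (1)). So any genuine derivation, such as yours, is by construction a different route; the question is whether it stands on its own, and as written it does not quite.

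Part (1) of your argument is circular relative to this paper's development. Both of your inequalities rest on the identity $\dc(a/K)=\dd(tp_\delta(a/K))$, where the right-hand side is the type-dimension of equation (\ref{eq:type-dimension}) computed for $\dd$. That identity is exactly Lemma \ref{lem:deltadim}, whose proof in the paper uses Proposition \ref{prop:deltadim}(1) in both directions; you cannot assume it here. If instead you intend to import ``the equivalence between $\dd$ and the dimension coming from $cl$'' wholesale from \cite{GP12}, note that on definable sets this equivalence \emph{is} the statement of part (1), so you would be citing the result to prove it --- legitimate (it is what the paper does), but then the scaffolding via Proposition \ref{real} adds nothing. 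Part (2) is closer to a real proof. The empty-interior direction via Lemma \ref{lem:polynomial} is correct: a semialgebraic subset of $\U^{m+1}$ with empty interior has dimension at most $m$, hence lies in the zero locus of a nontrivial polynomial $P$, and $P(\bar{\delta}^m(\cdot))$ is a nonzero differential polynomial over $K$ annihilating every element of $X$, so $\dc(a/K)=0$ for all $a\in X(\U)$. The converse, however, is under-argued: arranging a single derivative $\delta^{n+1}(a)$ to be ``algebraically independent'' does not make $a$ differentially transcendental over $K$, which is what $\dc(a/K)=1$ requires. You need a compactness argument in the style of Lemma \ref{lem:compacity}: the partial type asserting that $\bar{\delta}^m(x)$ lies in the given open set together with $q(x)\neq 0$ for every nonzero $q\in K\{x\}$ is finitely satisfiable, since each finite fragment cuts out a nonempty open semialgebraic condition on $\bar{\delta}^N(x)$ for some $N$, which is realized by the CODF axioms. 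With that repair part (2) goes through, but only conditionally on part (1).
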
 

\begin{proof}
Part (2) corresponds to \cite[Lemma 2.11]{GP12} and part (1) is a special case of \cite[Proposition  3.12]{GP12}. 
\end{proof}

The following lemma addresses the somehow classical issue of showing that the function $\delta$-$\dim$ induced on types and tuples as in Subsection 1.3 (equation (\ref{eq:type-dimension})), coincides, as expected, with $\dc$.

\begin{lemma}\label{lem:deltadim} Let $a=(a_1,\ldots,a_n)\in \U$ and $p:=tp_\delta(a/K)$. Then 
\[
\dc(a/K)=\inf\{\dd(\varphi(\U)):\; \varphi(x)\in p\}.
\]

\end{lemma}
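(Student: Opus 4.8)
\textbf{Proof plan for Lemma \ref{lem:deltadim}.}

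The plan is to prove the two inequalities separately, using Proposition \ref{prop:deltadim}(1) and the additivity of both $\dc$ and $\dd$ as the main tools. For the inequality $\dc(a/K)\leq \inf\{\dd(\varphi(\U)):\varphi\in p\}$, I would take any $\varphi(x)\in p$, so that $a\in\varphi(\U)$, and apply Proposition \ref{prop:deltadim}(1) directly: since $a$ realizes $\varphi$, we get $\dc(a/K)\leq\max\{\dc(a'/K):a'\in\varphi(\U)\}=\dd(\varphi(\U))$. Taking the infimum over all $\varphi\in p$ gives one direction immediately.

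For the reverse inequality, the idea is to exhibit a single formula $\varphi(x)\in p$ with $\dd(\varphi(\U))\leq\dc(a/K)$. Write $r:=\dc(a/K)$. First I would record that $\dd$ is a fibered dimension function on $\U$, so by the general machinery (Proposition \ref{real} applied inside CODF with $d=\dd$) there is already \emph{some} formula $\varphi\in p$ attaining $\dd(p)=\dd(\varphi(\U))$; what remains is to identify $\dd(p)$ with $r$. Using additivity of $\dc$ and Lemma \ref{lem:additivity} for $\dd$, together with part (1) of the present plan (which shows $\dd(p)\geq\dc(a/K)$ after taking infimum, since each $\dd(\varphi(\U))\geq\dc(a/K)$), it suffices to produce one $\varphi\in p$ with $\dd(\varphi(\U))\leq r$. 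The natural strategy is induction on $n$, reducing to the unary case via the fibered structure: decompose according to whether $\dc(a_n/K(a_1,\dots,a_{n-1}))$ is $0$ or $1$, use the $n=1$ case on the last coordinate and the induction hypothesis on $(a_1,\dots,a_{n-1})$, and glue the witnessing formulas using (Dim 4) and Lemma \ref{lem:additivity}. For $n=1$: if $\dc(a/K)=1$ there is nothing to do since $\dd(\varphi(\U))\leq 1$ always; if $\dc(a/K)=0$, then $a$ satisfies some nonzero differential polynomial $q\in\Q\langle K\rangle\{x\}$, i.e. $a\in\varphi(\U)$ for the $\cL_\delta$-formula $\varphi(x):\ q(x)=0$; then by Proposition \ref{prop:deltadim}(2), since $\varphi^*(\U)$ is contained in the zero locus of the nontrivial polynomial $q^*$ (hence has empty interior), $\dd(\varphi(\U))\neq 1$, so $\dd(\varphi(\U))=0=\dc(a/K)$.

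The main obstacle I anticipate is the bookkeeping in the inductive step: one must choose the witnessing formula $\chi(x_1,\dots,x_{n-1})\in tp_\delta(a_1,\dots,a_{n-1}/K)$ from the induction hypothesis and the witnessing formula $\psi(x_n)$ over $K\langle a_1,\dots,a_{n-1}\rangle$ from the unary case, pull the parameters of $\psi$ back into $K$-definable form, and verify that the conjunction has the right $\dd$ via (Dim 2) and (Dim 4); the subtlety is that $\dd$-fibers behave well only after the partition into $X(0)$ and $X(1)$, exactly as in the proof of Proposition \ref{real}, so the argument should be modeled closely on that proof, now carrying the extra constraint that the chosen formulas lie in the type $p$. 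Combining both inequalities yields $\dc(a/K)=\inf\{\dd(\varphi(\U)):\varphi(x)\in p\}$, and the infimum is in fact attained.
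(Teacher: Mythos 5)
Your first inequality is exactly the paper's argument; for the reverse inequality you take a genuinely different route. The paper does not induct on $n$: writing $d=\dc(a/K)$, it picks a $cl$-basis $b\in\U^d$ of $K\langle a\rangle$ over $K$, so that each $a_i$ satisfies a nonzero differential polynomial $p_i(b,x_i)$ over $K\langle b\rangle$, and exhibits the single formula $\psi(x):=\exists y\,\bigwedge_{i}\bigl(p_i(y,x_i)=0\wedge\exists u\,p_i(y,u)\neq 0\bigr)\in p$; any realization $c$ with witness $z$ satisfies $\dc(c/Kz)=0$ and $\dc(z/K)\leq d$, so $\dc(c/K)\leq d$ by additivity, and Proposition \ref{prop:deltadim}(1) gives $\dd(\psi(\U))\leq d$. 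Your coordinate-wise induction with additivity and (Dim~4) also works, but two points that the paper's formula handles automatically must be made explicit in your version: (i) the unary case has to be run over the non-model parameter set $Ka_1\cdots a_{n-1}$, so when $\dc(a_n/Ka_{<n})=1$ you need the annihilating polynomial (via Lemma \ref{lem:polynomial} and Proposition \ref{prop:deltadim}(2)) to descend to $K\langle a_{<n}\rangle$ rather than merely to some ambient model; (ii) after substituting variables for the parameters $a_{<n}$ in the unary witness $q(a_{<n},x_n)=0$, the polynomial $q(c_{<n},\cdot)$ may vanish identically at other realizations $c_{<n}$ --- this is precisely what the clause $\exists u\,p_i(y,u)\neq 0$ excludes in the paper's $\psi$, and what your restriction to the fiber set $X(0)$ via (Dim~4) excludes in yours, so your flagged ``bookkeeping'' is a genuine necessity and not mere tidiness. (A small misattribution: attainment of the infimum in \eqref{eq:type-dimension} is the remark following that equation, not Proposition \ref{real}, which goes in the opposite direction.) With these points spelled out your argument closes; the paper's is shorter because the basis $b$ treats all coordinates at once and the nonvanishing clause replaces the $X(0)/X(1)$ partition.
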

\begin{proof} 
	Set $m:=\inf\{\delta$-dim$(\varphi(\U)):\; \varphi(x)\in p\}\leqslant n$ and let $\varphi\in p$ be such that $\delta$-$\dim(\varphi(\U))=m$. Since $\U\models \varphi(a)$, by Proposition \ref{prop:deltadim} we have that 
	\[
	\text{$cl$-$\dim$}(a/K)\leqslant \max\{\text{$cl$-$\dim$}(b/K) : b\in \varphi(\U)\}= \text{$\delta$-$\dim$}(\varphi(\U))=m. 
	\]
	We show that $m\leqslant \text{$cl$-$\dim$}(a/K)$. Suppose that $cl$-dim$(a/K)=d\leq n$. Let $b\in \U^d$ belong to $K\langle a\rangle$ such that $cl$-dim$(b/K)$=d=$cl$-dim$(a/K)$. So there are $q_i\in K\langle b\rangle\{x_i\}\setminus\{0\}$, $1\leq i\leq n$, be such that $q_i(a_i)=0$. Rewrite $q_i$ as differential polynomials $p_i\in K\langle y,x_i\rangle$. The formula $\psi(x_1,\ldots,x_n):=\exists y\;\bigwedge_{i=1} ^n\;(p_i(y,x_i)=0\wedge \exists u\,p_i(y,u)\neq 0)$ is in $p$. Let $c\in \U^{n}$ be any tuple such that $\psi(c)$ holds. Then there exists $z\in \U^d$ such that $\bigwedge_{i=1}^n(p_i(z,c_i)=0\wedge \exists u p_i(z,u)\neq 0)$. This shows that $\dc(c/Kz)=0$. Notice that since $z$ has length $d$, we have that $\dc(z/K)\leq d$. Therefore, we have that 
	\[
	\dc(c/K)\leqslant \dc(cz/K)= \dc(z/K)+\dc(c/Kz)\leqslant d.
	\]
	By Proposition \ref{prop:deltadim}, $\dd(\psi(\U))\leq d$, which shows $m\leqslant d$.
\end{proof}

Now let us show that the dimension function $\delta$-$\dim$ is code-definable. 

\

\begin{lemma}\label{lem:deltadimcode} 
	The $\delta$-dimension is code-definable.
\end{lemma}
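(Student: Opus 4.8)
The plan is to verify condition (Dim 5) for $\dd$. Fix an $\cL_\delta$-definable set $X\subseteq\U^{n+1}$ and a code $c_\delta(X)$ for it in $\cL_\delta$; we must show that $X(0)=\{a\in\pi_n(X):\dd(X_a)=0\}$ and $X(1)=\{a\in\pi_n(X):\dd(X_a)=1\}$ are $c_\delta(X)$-definable. Since $X$ is $c_\delta(X)$-definable so is $\pi_n(X)$, and for $a\in\pi_n(X)$ the fiber $X_a\subseteq\U^1$ is non-empty, hence $\dd(X_a)\in\{0,1\}$ by Lemma \ref{lem:dim}; thus $X(0)=\pi_n(X)\setminus X(1)$ and it is enough to treat $X(1)$.

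First I would show that $X(1)$ is $\cL_\delta$-definable. By quantifier elimination in $\cL_\delta$, write $X$ as the set defined by a quantifier-free $\cL_\delta$-formula $\psi(y,x)$, with $y=(y_1,\dots,y_n)$ and $x$ a single variable, with parameters $c$ in $\U$; then for each $b\in\U^n$ the fiber $X_b$ is defined by the quantifier-free formula $\psi(b,x)$. Let $N$ be the order of $x$ in $\psi$. Applying the functor $\varphi\mapsto\varphi^\ast$ of Section \ref{sec:prelim} to $\psi(b,x)$ produces an $\cL_{or}$-formula whose free variables are (renamings of) $\bar\delta^{N}(x)$ and whose parameters are obtained from $b$ and $c$ by the $\bar\delta$-operation; crucially the underlying parameter-free $\cL_{or}$-formula does not depend on $b$. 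Hence $\psi(b,x)^\ast(\U)$ is a uniformly definable family (indexed by $\bar\delta^{M}(b)$ for some fixed $M$) of semialgebraic subsets of $\U^{N+1}$. By Proposition \ref{prop:deltadim}(2), $\dd(X_b)=1$ if and only if $\psi(b,x)^\ast(\U)$ has non-empty interior in $\U^{N+1}$; since $Int(\cdot)$ is $\cL_{or}$-definable, there is an $\cL_{or}$-formula $\Theta$ with $\dd(X_b)=1\iff\U\models\Theta(\bar\delta^{M}(b),\bar\delta^{M}(c))$. Therefore $X(1)=\{b\in\pi_n(X):\U\models\Theta(\bar\delta^{M}(b),\bar\delta^{M}(c))\}$ is $\cL_\delta$-definable (a priori only over the parameters of $X$).

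It remains to descend the parameters to a code of $X$. The set $X(1)$ is intrinsically attached to $X$: if $\sigma\in Aut_{\cL_\delta}(\U)$ fixes $X$ setwise, then $\sigma(\pi_n(X))=\pi_n(X)$ and $\sigma(X_b)=X_{\sigma(b)}$ for all $b$; and since the closure operator $cl$ commutes with $\cL_\delta$-automorphisms, the invariant $\dd$ satisfies $\dd(X_{\sigma(b)})=\dd(\sigma(X_b))=\dd(X_b)$, so $\sigma(X(1))=X(1)$. Thus every $\sigma$ fixing $c_\delta(X)$ fixes a code for the definable set $X(1)$, so by homogeneity of $\U$ that code lies in $dcl^{eq}(c_\delta(X))$, i.e. $X(1)$ is $c_\delta(X)$-definable. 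Consequently $X(0)=\pi_n(X)\setminus X(1)$ is $c_\delta(X)$-definable as well, so $\dd$ satisfies (Dim 5). (By Theorem \ref{ei} one may take $c_\delta(X)$ to be a finite real tuple, but this is not needed for the argument.)

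The main obstacle is the definability statement of the second paragraph: everything hinges on Proposition \ref{prop:deltadim}(2), which converts the a priori transcendence-theoretic condition $\dd(X_b)=1$ into the genuinely first-order and uniformly definable condition ``$\psi(b,x)^\ast(\U)$ has non-empty interior''. Once that uniformity is secured, the passage from the parameters of $X$ to a code of $X$ is a soft automorphism argument using only that $\dd$ is an $\cL_\delta$-invariant. One must be somewhat careful bookkeeping the functor $\varphi\mapsto\varphi^\ast$ on formulas with parameters — which coordinates become free variables and over which $\bar\delta$-expansions $\Theta$ is evaluated — but no real difficulty arises there.
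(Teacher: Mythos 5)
Your proof is correct and follows essentially the same route as the paper's: both reduce to $X(1)$, use the $\varphi\mapsto\varphi^{\ast}$ translation together with Proposition \ref{prop:deltadim}(2) to turn ``$\dd(X_a)=1$'' into a uniform first-order condition on $\bar\delta$-expansions of the parameters, and then conclude by an automorphism/invariance argument. The only (harmless) difference is that you derive the invariance of $\dd$ under automorphisms abstractly from the $cl$-characterization and separately record the definability of $X(1)$, whereas the paper pushes the automorphism directly through the explicit ``non-empty interior'' formula.
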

\begin{proof} 
	Let $X\subseteq \U^{n+1}$ be a definable set. It suffices to show that $X(1)$ is $c_\delta(X)$-definable. 
	Suppose that the quantifier-free formula $\varphi(x,y,c)$ defines $X$ where $x=(x_1,\ldots,x_n)$, $y$ is a single variable and $c=(c_1,\ldots, c_s) \in \U^s$ are all the parameters in the formula. 
	Let $\sigma\in Aut_{\cL_\delta}(\U)$ be such that $\sigma(X)=X$. We show that $\sigma(X(1))=X(1)$. Pick $a\in X(1)$, so by definition $a\in \pi_m(X)$ and $\dd(X_a)=1$. 
	Since $\sigma(\pi_m(X))=\pi_m(X)$, we have that $\sigma(a)\in \pi_m(X)$. It remains to show that $\dd(X_{\sigma(a)})=1$. 

	Consider the $\cL_{or}$-formula $\varphi^*(\overline{x_1},\ldots,\overline{x_n}, \overline{y},\bar{\delta}^{t_1}(c_1),\ldots,\bar{\delta}^{t_s}(c_s))$ and let $m_i:=\ell(\overline{x_i})$ for $i\in\{1,\ldots,n\}$ and $\ell:=\ell(\overline{y})$. Recall that we have that $\varphi(x,y,c)$ is equivalent to 
	\[
	\varphi^*(\bar{\delta}^{m_1}(x_1),\dots,\bar{\delta}^{m_n}(x_n),\bar{\delta}^{\ell}(y),\bar{\delta}^{t_1}(c_1),\ldots,\bar{\delta}^{t_s}(c_s)).
	\]
	To ease notations in this proof, we will denote by $\bar \delta(a)$ the tuple $(\bar\delta^{m_1}(a_1),\cdots,\bar\delta^{m_n}(a_n))$, and by $\bar{\delta}(c)$ the tuple $(\bar{\delta}^{t_1}(c_1),\ldots,\bar{\delta}^{t_s}(c_s))$. 

	By Proposition \ref{prop:deltadim} (part 2), we have that 
	\begin{align*}
	\dd(X_{a})=1		& \Leftrightarrow \text{$\varphi^*(\bar{\delta}(a),\U,\bar{\delta}(c))$ contains a non-empty open set}\\		
								& \Leftrightarrow \U \models \exists \overline{z}\ \exists \epsilon>0\ \forall \overline{w}\ [ |\overline{w}-\overline{z}|<\epsilon \to \varphi^{\ast}(\bar\delta(a),\overline{w},\bar{\delta}(c))]\\
							 	& \Leftrightarrow \U \models \exists \overline{z}\ \exists \epsilon>0\  \forall \overline{w}\ [ |\overline{w}-\overline{z}|<\epsilon \to \varphi^{\ast}(\bar\delta(\sigma(a)),\overline{w},\bar{\delta}(\sigma(c)))] \\
							 	& \Leftrightarrow \text{$\varphi^*(\bar{\delta}(\sigma(a)),\U,\bar{\delta}(\sigma(c)))$ contains a non-empty open set}\\
							 	& \Leftrightarrow \dd(X_{\sigma(a)})=1.
	\end{align*}
\end{proof}

Before proving Theorem \ref{thm:strongdensityoftypes}, 
we need the following technical lemma which will ensure that we can find an element of the required dimension in a definable subset.

\begin{lemma}\label{lem:compacity} 
Let $a\in \U$ be such that $dim(\bar{\delta}^{n-1}(a)/K)=n$ and $x$ a variable of length 1. Let $\Sigma$ be the set of $\cL_{\delta}$-formulas $\Sigma:=\Sigma_1\cup \Sigma_2\cup \Sigma_3$ with parameters in $K$ where  
\begin{equation*}\label{eq:4}
\begin{array}{l}
\Sigma_1:=\{q(x)\neq 0 \colon q\in K\{x\}\setminus\{0\}\} \\
\Sigma_2:=\{\varphi(\bar{\delta}^{n-1}(x)) : \varphi\in tp(\bar{\delta}^{n-1}(a)/K) \}  \\ 
\Sigma_3:=\bigcup_{i\geqslant 1} \{\delta^{n+i}(x)< c : c\in K(\bar{\delta}^{n+i-1}(x))\}\\
\end{array}
\end{equation*}
Then $\Sigma$ is consistent and can be completed into a unique type $p\in S_1^{CODF}(K)$. 
\end{lemma}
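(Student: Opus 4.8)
The plan is to obtain consistency of $\Sigma$ by a compactness argument --- realising each finite subset of $\Sigma$ inside $\U$ by first assembling an explicit algebraically independent tuple and then transporting it into the derivative stream via density of primitives --- and then to read off uniqueness of the completion from quantifier elimination in $\cL_\delta$. The main obstacle is the consistency step: a realisation $b$ must \emph{simultaneously} keep $\bar\delta^{n-1}(b)$ of the prescribed (full-dimensional) $\cL_{or}$-type over $K$, keep $b$ differentially transcendental over $K$, and push every derivative $\delta^m(b)$ below the field generated by $\bar\delta^{m-1}(b)$; it is precisely the hypothesis $\dim(\bar\delta^{n-1}(a)/K)=n$, via Lemma \ref{lem:otypes}, that lets one absorb the perturbation inherent in the use of Lemma \ref{constants} while retaining the first of these demands.

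\emph{Consistency.} Since $\Sigma$ is a small set of $\cL_\delta$-formulas over $K$ in the single variable $x$, by saturation of $\U$ it suffices to realise an arbitrary finite $\Sigma_0\subseteq\Sigma$ in $\U$. Such a $\Sigma_0$ is contained in a set consisting of finitely many conditions $q_1(x)\neq 0,\dots,q_r(x)\neq 0$ with $q_k\in K\{x\}\setminus\{0\}$, a single formula $\varphi_0(\bar\delta^{n-1}(x))$ where $\varphi_0\in tp(\bar\delta^{n-1}(a)/K)$ is the conjunction of the finitely many formulas of $\Sigma_2$ that occur, and finitely many conditions $\delta^{n+i}(x)<c_{ij}(\bar\delta^{n+i-1}(x))$ for $i\le N$, $j\le M_i$, with the $c_{ij}$ rational functions over $K$. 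Fix $M\ge 1$ with $M\ge N$ and $n+M\ge\max_k\ord(q_k)$. First I would build a tuple $v=(v_0,\dots,v_{n+M})\in\U^{n+M+1}$ by setting $(v_0,\dots,v_{n-1}):=\bar\delta^{n-1}(a)$ and then choosing recursively, for $i=0,\dots,M$, an element $v_{n+i}\in\U$ realising the type $-\infty$ over $K(v_0,\dots,v_{n+i-1})^{rc}$. Since $\bar\delta^{n-1}(a)$ is algebraically independent over $K$ (by the dimension hypothesis) and each new $v_{n+i}$ is transcendental over the field generated by $K$ and the previous entries, the whole tuple $v$ is algebraically independent over $K$; hence $q_k^{*}(v_0,\dots,v_{\ord q_k})\neq 0$ for every $k$, $\varphi_0(v_0,\dots,v_{n-1})$ holds trivially, and $v_{n+i}<c_{ij}(v_0,\dots,v_{n+i-1})$ holds because $c_{ij}(v_0,\dots,v_{n+i-1})\in K(v_0,\dots,v_{n+i-1})^{rc}$. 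Now choose $\epsilon_0\in\U$ realising $0^{+}$ over $K(v)^{rc}$ and apply Lemma \ref{constants} (with the integer $n+M+1$ in place of its $n$, and the $\epsilon_0$-box around $v$ as the non-empty open set) to obtain $b\in\U$ with $|\bar\delta^{n+M}(b)-v|<\epsilon_0$. Because $\epsilon_0$ lies below every positive element of $K(v)^{rc}$, the finitely many \emph{open} conditions ``$q_k^{*}(\,\cdot\,)\neq 0$'' and ``$\,\cdot\,<c_{ij}(\,\cdot\,)$'' survive this perturbation, so $b$ satisfies the $\Sigma_1$- and $\Sigma_3$-parts of $\Sigma_0$; and since $|\bar\delta^{n-1}(b)-\bar\delta^{n-1}(a)|<\epsilon_0$ with $\epsilon_0$ realising $0^{+}$ over $K(\bar\delta^{n-1}(a))^{rc}\subseteq K(v)^{rc}$, Lemma \ref{lem:otypes} gives $tp(\bar\delta^{n-1}(b)/K)=tp(\bar\delta^{n-1}(a)/K)$, whence $\varphi_0(\bar\delta^{n-1}(b))$. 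Thus $b\models\Sigma_0$, so $\Sigma$ is consistent; let $p\in S_1^{CODF}(K)$ be any completion.

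\emph{Uniqueness.} Let $b,b'\in\U$ both realise $\Sigma$. By quantifier elimination in $\cL_\delta$ the $\cL_\delta$-type of an element over $K$ is determined by the family of $\cL_{or}$-types $tp(\bar\delta^{m}(\,\cdot\,)/K)$, $m\ge 0$, so it suffices to prove $tp(\bar\delta^{m}(b)/K)=tp(\bar\delta^{m}(b')/K)$ for all $m$, which I would do by induction on $m$. For $m\le n-1$ this is exactly $\Sigma_2$. For $m\ge n$, assume $tp(\bar\delta^{m-1}(b)/K)=tp(\bar\delta^{m-1}(b')/K)$ and fix an $\cL_{or}$-isomorphism $K(\bar\delta^{m-1}(b))^{rc}\to K(\bar\delta^{m-1}(b'))^{rc}$ over $K$ carrying $\bar\delta^{m-1}(b)$ to $\bar\delta^{m-1}(b')$; the formulas of $\Sigma_1$ and $\Sigma_3$ force $\delta^{m}(b)$ to be transcendental over $K(\bar\delta^{m-1}(b))$ and to realise over it the type $-\infty$, and likewise for $b'$. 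Since ``being below the whole field'' is a complete $\cL_{or}$-type visibly carried to itself by the isomorphism, we get $tp(\bar\delta^{m}(b)/K)=tp(\bar\delta^{m}(b')/K)$, completing the induction. Hence $p$ is the unique completion of $\Sigma$ in $S_1^{CODF}(K)$.
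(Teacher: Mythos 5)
Your consistency argument is correct and, while it rests on the same two pillars as the paper's proof --- density of primitives (Lemma \ref{constants}) and the perturbation control afforded by $\dim(\bar\delta^{n-1}(a)/K)=n$ via Lemma \ref{lem:otypes} --- it packages them differently. The paper reduces a finite subset $\Theta$ to a single formula $\theta$ and shows that the associated $\cL_{or}$-set $\theta^*(K)$ is non-empty and \emph{open} (first arranging that $\varphi$ defines an open set, then decomposing $\theta^*$ into a Zariski-open set intersected with product-like open sets), after which density of $\nabla$-points in open sets finishes. You instead exhibit one explicit point of the corresponding set in $\U$, namely the tuple $v$ anchored at $\bar\delta^{n-1}(a)$ and extended by successive realizations of $-\infty$, observe that every condition in the finite fragment holds on a neighbourhood of $v$ of radius lying in $K(v)^{rc}$ (algebraic independence of $v$ over $K$ is what makes the Zariski and rational-function conditions open there), and then perturb $v$ into $\bar\delta^{n+M}(b)$ by an $\epsilon_0$ realizing $0^+$ over $K(v)^{rc}$. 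Since $v$ has full dimension $n+M+1$ over $K$, you could quote Lemma \ref{lem:otypes} once for the whole tuple and absorb the separate ``open conditions survive'' discussion. This pointwise variant is sound, and your uniqueness argument is more explicit than the paper's, which only gestures at quantifier elimination.

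One point needs attention, though you have inherited it from the statement itself: as printed, $\Sigma_3$ is indexed by $i\geqslant 1$ and therefore constrains only $\delta^{n+1}(x),\delta^{n+2}(x),\ldots$; nothing in $\Sigma$ determines the cut of $\delta^{n}(x)$ over $K(\bar\delta^{n-1}(x))^{rc}$ beyond transcendence. So your induction step at $m=n$ (``the formulas of $\Sigma_1$ and $\Sigma_3$ force $\delta^{m}(b)$ to realise the type $-\infty$'') is unjustified there, and indeed uniqueness fails for $\Sigma$ as literally written: two realizations could place $\delta^{n}$ in different transcendental cuts while still satisfying $\Sigma_1\cup\Sigma_2\cup\Sigma_3$. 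The intended indexing is $i\geqslant 0$, which is the version implicitly used later in the proof of Theorem \ref{thm:strongdensityoftypes}, where $tp(\delta^{n}(b)/K\bar\delta^{n-1}(b))$ is asserted to be the type $-\infty$. With that correction your induction goes through uniformly for all $m\geqslant n$, and the proof is complete.
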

\begin{proof} By quantifier elimination, we may assume that all formulas in $\Sigma$ are quantifier-free. By compactness, it suffices to show that every finite subset $\Theta$ of $\Sigma(x)$ is consistent (that $\Sigma$ can be completed into a unique type will follow from the fact that CODF admits quantifier elimination). 
	Let $m\geqslant 0$ be an integer such that
\begin{equation*}\label{eq:5}
	\begin{array}{l}
		\Theta\cap \Sigma_1\subseteq \{q(\bar{\delta}^m(x))\neq 0 \colon q\in K[\bar{\delta}^m(x)]\setminus\{0\}\} \\
		\Theta\cap \Sigma_3\subseteq \bigcup_{1<i\leqslant m} \{\delta^{n+i}(x)< c : c\in K(\bar{\delta}^{n+i-1}(x))\}.\\
	\end{array}
\end{equation*}
By multiplying all polynomials appearing in $\Theta\cap\Sigma_1$, we may assume $\Theta\cap\Sigma_1=\{q(x)\neq 0\}$ for some $q\in K[\bar{\delta}^m(x)]\setminus\{0\}$. 
Similarly, by replacing all formulas in $\Theta\cap\Sigma_2(x)$ by their conjunction, 
we may suppose $\Theta\cap\Sigma_2=\{\varphi\}$. Therefore, $\Theta$ is consistent if and only if the following formula $\theta(x)$ is consistent 
\begin{equation*}\label{eq:5}
\theta(x):=q(\bar{\delta}^m(x))\neq 0 \wedge \varphi(\bar{\delta}^{n-1}(x)) \wedge \psi(\bar{\delta}^{n+m}(x)) 
\end{equation*}
where $\psi(\bar{\delta}^{n+m}(x))$ corresponds to the conjunction of all formulas in $\Theta\cap\Sigma_3$. Suppose that 
\[
\psi(\bar{\delta}^{n+m}(x)):=\bigwedge_{i=1}^m \bigwedge_{j=1}^{m_i} \delta^{n+i}(x)<f_{ij}(\bar{\delta}^{n+i-1}(x)), 
\] 
with $m_i$ a positive integer for $i\in\{1,\ldots,m\}$ and $f_{ij}$ a rational function over $K$ in $n+i-1$ variables. Since $dim(\bar{\delta}^{n-1}(a)/K)=n$, we may further suppose that $\varphi$ defines an open subset of $K^n$. 
Consider the $\cL_{or}$-definable set $\theta^*(K)\subseteq K^{n+m}$. 
By Lemma \ref{constants}, to show that $\theta$ is consistent it suffices to show that $\theta^*(K)$ is open and non-empty. For $\bar{x}=(x_0,\ldots,x_m)$ we have that 
\begin{equation*}\label{eq:5}
\theta^*(\bar{x}):=q^*(\bar{x})\neq 0 \wedge \varphi(x_0,\ldots,x_{n-1}) \wedge \psi^*(\bar{x}), 
\end{equation*}
where $q^*$ is a polynomial over $K$ in $m+1$ variables. The formula $\varphi$, seen as a formula over the variables $\bar{x}$, defines an open subset of the form $U\times K^m$ where $U$ is a non-empty open subset of $K^n$. The formula $\psi^*(\bar{x})$ defines an open subset of the form $K^n\times V$ for $V$ a non-empty open subset of $K^m$. Therefore $\varphi\wedge \psi^*$ defines the non-empty open subset $U\times V$ of $K^{n+m}$. Finally, the intersection of a Zariski open subset of $K^{n+m}$ with any non-empty open subset of $K^{n+m}$ (in particular with $U\times V$) is non-empty and remains open. This shows that $\theta^*(K)$ is a non-empty open subset of $K^{n+m}$. 
\end{proof}

We have all the elements to show Theorem \ref{thm:strongdensityoftypes}. 

\begin{proof}[Proof of Theorem \ref{thm:strongdensityoftypes}:] By Lemma \ref{lem:deltadimcode} and Proposition \ref{lem:1-to-n}, it suffices to prove the property for non-empty definable subsets $X\subseteq K$. By Proposition \ref{prop:strong-1-density}, there is $a\in X(\U)$ such that $tp(a/K)$ is $c_\delta(X)$ definable. We split in cases depending on the value of $\delta$-$\dim(X)$. 

Suppose first that $\delta$-$\dim(X)=0$. Since $X$ is $K$-definable, by Lemma \ref{lem:deltadim}, we must already have that $\delta$-$\dim(a/K)=0$. 

Suppose now that $\delta$-$\dim(X)=1$. By Lemma \ref{lem:density}, there is an $\cL_{or}$-definable subset $X^{\circledast}\subset K^n$ 
such that  
\begin{equation}\label{eq:1}\tag{E1}
\U\models x\in \overline{X^{\circledast}}\leftrightarrow (\forall \epsilon>0)( \exists z\in X)[|\bar{\delta}^{n-1}(z)-x|<\epsilon].
\end{equation}
Since $a\in X$, we have that $\bar{\delta}^{n-1}(a)\in X^{\circledast}$. 
Moreover, since $\delta$-$\dim(X)=1$, we must have that $\dim(\bar{\delta}^{n-1}(a)/K)=n$. We need to find an element $b\in X(\U)$ such that $tp(\delta^{n-1}(a)/K)=tp(\delta^{n-1}(b)/K)$, $\delta$-$\dim(b/K)=1$ and $tp_\delta(b/K)$ is still $c_\delta(X)$-definable. %
We will use Lemma \ref{lem:compacity} to find such an element. Note that the element $a$ which we found in Proposition \ref{prop:strong-1-density} is always such that $\delta$-$\dim(a/K)=0$. 

Let
\begin{equation*}\label{eq:4} 
\begin{array}{l}
\Sigma_1:=\{q(x)\neq 0 \colon q\in K\{x\}\} \\
\Sigma_2:=\{\varphi(\bar{\delta}^{n-1}(x)) : \varphi\in tp(\bar{\delta}^{n-1}(a)/K) \}  \\ 
\Sigma_3:=\bigcup_{i\geqslant 1} \{\delta^{n+i}(x)< c : c\in K(\bar{\delta}^{n+i-1}(x))\},\\
\end{array}
\end{equation*} 
by Lemma \ref{lem:compacity}, $\Sigma_1\cup\Sigma_2\cup\Sigma_3$ determines a unique type $p$  and we will show that it has all the required properties. Let $b$ be a realization of $p$. Since $tp(\bar{\delta}^{n-1}(b)/K)=tp(\bar{\delta}^{n-1}(a)/K)$, by (\ref{eq:1}) we have that $b\in X(\U)$. The fact that it satisfies $\Sigma_1$ ensures that $\delta$-$\dim(b/K)=1$. It remains to show that $tp_{\delta}(b/K)$ is $c_{\delta}(X)$-definable. We will show that for every $m$, $tp(\bar \delta^{m}(b)/K)$ is $c_\delta(X)$-definable. For $m<n$, it follows from the fact that $tp(\bar \delta^{n-1}(a)/K)$ is $c_\delta(X)$-definable. By induction on $i\geq 0$, let us show that $tp(\bar \delta^{n+i}(b)/K)$ is $c_\delta(X)$-definable. By Lemma \ref{transitivity}, it suffices to show that $tp(\delta^{n+i}(b)/K\bar \delta^{n-1+i}(b))$ is $c_\delta(X)$-definable. 
Actually, $tp(\delta^{n+i}(b)/K\bar \delta^{n-1+i}(b))$ is even $\emptyset$-definable because it is the type $-\infty$ over $K(\bar \delta^{n-1+i}(b))$.

\

Since $tp(\bar{\delta}^{n-1}(a)/K)$ is $c_{\delta}(X)$-definable, by Lemma \ref{transitivity}, $tp(\bar{\delta}^{k}(b)/K)$ is also $c_{\delta}(X)$-definable for all $k\geqslant 0$. Therefore, $tp_\delta(b/K)$ is $c_\delta(X)$-definable by Lemma \ref{forgetful}. 
\end{proof}

\bibliographystyle{alpha}
\bibliography{../biblio}

\end{document}